\documentclass[a4paper,12pt]{amsart}
\usepackage{amsmath,amsthm,amssymb}
\usepackage[latin1]{inputenc}
\usepackage[T1]{fontenc}
\usepackage[all]{xy}
\usepackage{mathrsfs}

\textwidth=6.5in
\topmargin=0.2in
\oddsidemargin=-0.00in
\evensidemargin=-0.00in
\textheight = 8.6in

\numberwithin{equation}{section}
\newtheorem{thm}[equation]{Theorem}
\newtheorem*{prop*}{Proposition}
\newtheorem*{propa*}{Proposition A}
\newtheorem*{propb*}{Proposition B}
\newtheorem*{thma*}{Theorem A}
\newtheorem*{thm*}{Theorem}
\newtheorem{cor}[subsection]{Corollary}
\newtheorem{lem}[subsection]{Lemma}
\newtheorem{prop}[equation]{Proposition}

\theoremstyle{definition}

\newtheorem{rem}[equation]{Remark}

\DeclareMathOperator{\h}{H}

\newcommand{\triv}{{\mathbf{1}}}
\def\R{\mathbb R}

\def\Z{\mathbb Z}
\def\A{\mathbb A}
\def\Q{\mathbb Q}
\def\C{\mathbb C}
\def\F{\mathbb F}
\def\E{\mathcal E}

\newcommand{\Whit}{\mathcal{W}}
\newcommand{\whit}{W}

\def\ira{\stackrel{\sim}{\longrightarrow}}
\def\hra{\hookrightarrow}

\def\ra{\rightarrow}

\def\g{\mathfrak g}

\def\t{\textbf{\emph{t}}}
\def\s{\mathfrak s}

\def\a{\mathfrak a}

\def\O{\mathcal O}
\def\h{\mathfrak h}
\def\c{\mathfrak c}
\def\m{\mathfrak m}
\def\k{\mathfrak k}

\def\<{\langle}
\def\>{\rangle}

\def\GL{{\rm GL}}
\def\SSS{\mathcal{S}}
\def\SO{{\rm SO}}

\def\Hom{{\rm Hom}}

\def\Whit{\mathcal{W}}
\def\z{\mathfrak{z}}

\newcommand{\sm}{\mathscr{C}^\infty}

\title[]{A rationality result for\\ the exterior and the symmetric square $L$-function.}
\author{Harald Grobner}
\date{\today}

\address{Harald Grobner: Fakult\"at f\"ur Mathematik, University of Vienna\\ Oskar--Morgenstern--Platz 1\\ A-1090 Vienna, Austria.}
\email{harald.grobner@univie.ac.at}
\urladdr{http://homepage.univie.ac.at/harald.grobner}

\keywords{cuspidal automorphic representation, exterior square, $L$-function, rationality result, period, regulator,}
\subjclass[2010]{Primary: 11F67; Secondary: 11F41, 11F70, 11F75, 22E55}
\thanks{H.G. is supported by the Austrian Science Fund (FWF), project number P 25974-N25.}

\begin{document}
\maketitle
\centerline{{\it With an appendix by Nadir Matringe}}
\normalsize
\begin{abstract}
Let $G=\GL_{2n}$ over a totally real number field $F$ and $n\geq 2$. Let $\Pi$ be a cuspidal automorphic representation of $G(\A)$, which is cohomological and a functorial lift from SO$(2n+1)$. The latter condition can be equivalently reformulated that the exterior square $L$-function of $\Pi$ has a pole at $s=1$. In this paper, we prove a rationality result for the residue of the exterior square $L$-function at $s=1$ and also for the holomorphic value of the symmetric square $L$-function at $s=1$ attached to $\Pi$. On the way, we also show a rationality result for the residue of the Rankin--Selberg $L$-function at $s=1$, which is very much in the spirit of our recent joint paper with Harris and Lapid \cite{grob_harris_lapid}, as well as of one of the main results in a recent article of Balasubramanyam--Raghuram \cite{bala_ragh}
\end{abstract}

\setcounter{tocdepth}{1}
\tableofcontents
\section{Introduction}
\subsection{General background}
Let $F$ be an algebraic number field and let $\Pi$ be a cuspidal automorphic representation of $\GL_2(\A_F)$. Rationality results for special values of the associate automorphic $L$-function $L(s,\Pi)$ have been studied by several authors over the last decades. For the scope of this paper, we would like to mention Y. Manin and G. Shimura, who were the first to study special values of $L(s,\Pi)$ in the particular case, when $F$ is totally real, i.e., when $\Pi$ comes from a Hilbert modular form, cf.\ \cite{manin} and \cite{shimura}, and P. F. Kurchanov, who treated the case of a CM-field $F$ in a series of (meanwhile seemingly forgotten) papers, cf.\ \cite{kurc1, kurc2}. Shortly later, G. Harder published some articles, see \cite{hardermodsym, harder_periodint}, in which he described a general approach to such rationality results. In \cite{hardermodsym}, Harder considered the case of an arbitrary number field $F$, while in \cite{harder_periodint}, he extended the methods of the above authors to some automorphic representations, which do not necessarily come from cusp forms (for $F$ imaginary quadratic). The case of $\GL_2$ over a general number field $F$ has also been considered later on by Shimura, see \cite{shimura_Invent}.

It took some time until extensions of these results to general linear groups $\GL_n$ of higher rank $n$ were available. Important achievements include Ash-Ginzburg, \cite{ash-ginzburg}, Kazhdan-Mazur-Schmidt, \cite{kazhdan-mazur-schmidt} and Mahnkopf, \cite{mahnk}.

Guided by the above methods, meanwhile, there is a growing number of results that have been proved about the rationality of special values of certain automorphic $L$-functions attached to $\GL_n$. As a selection of examples, relevant to the present paper, we refer to Raghuram \cite{raghuram-imrn, ragh-gln}, Harder-Raghuram \cite{harder-raghuram}, Grobner-Harris \cite{grob_harris}; Grobner-Raghuram \cite{grob-ragh}, Grobner-Harris-Lapid \cite{grob_harris_lapid} and Balasubramanyam-Raghuram \cite{bala_ragh}.

In all of these reference, the corresponding rationality result is obtained by writing the special $L$-value at hand as an algebraic multiple of a certain period invariant\footnote{The approach taken in \cite{grob_harris_lapid}, however, is a certain, basis-free variation of the latter.}. This period is defined by comparison of a rational structure on a cohomology space, attached to the given automorphic representation $\Pi$, with a rational structure on a model-space of (the finite part of) $\Pi$, such as a Whittaker model or a Shalika model. (The word ``rational structure'' here refers to a subspace of the vector space, carrying the action of $\Pi$, which is essentially defined over the field of rationality of $\Pi$ and at the same time stable under the group action.) While the first rational structure on the cohomology space is purely of geometric nature and has its origin in the cohomology of arithmetic groups (or better: the cohomology of arithmetically defined locally symmetric spaces), the latter rational structure is defined by reference to the uniqueness to the given model-space.

In this paper, we continue the above considerations. But while most of the aforementioned papers deal with special values of the Rankin--Selberg $L$-function (by some variation or the other), the principal $L$-function, or the Asai $L$-functions, here we would like to study the algebraicity of the exterior square $L$-function and the symmetric square $L$-function, attached to a cuspidal automorphic representation of the general linear group.

\subsection{The main results of this paper}
To put ourselves {\it in medias res}, let $F$ be a totally real number field and let $G=\GL_{N}/F$, $N=2n$ with $n\geq 2$. The restrictions on $F$ and the index of the general linear groups under consideration are owed to the inevitable, as it will become clear below. Indeed, let $\Pi$ be a cuspidal automorphic representation of $G(\A)$ and let $\Whit^\psi(\Pi)$ be its $\psi$-Whittaker model. As we want to exploit the results of Bump--Friedberg, \cite{bumbginzburg}, we shall assume that the partial exterior square $L$-function $L^S(s,\Pi,\Lambda^2)$ of $\Pi$ has a pole at $s=1$. (Here, $S$ is a finite set of places of $F$, containing all archimedean ones, such that for a place $v\notin S$, the local components $\Pi_v$ and $\psi_v$ are unramified.) In particular, this forces $N=2n$ to be even, see \cite{jac_shal_AnnArbor}, Thm.\ 2, and furthermore $\Pi$ to be self-dual, $\Pi\cong\Pi^\vee$, and to have trivial central character.

Our first main result gives a rationality statement for the residue ${\rm Res}_{s=1}(L^S(s,\Pi,\Lambda^2))$ of the exterior square $L$-function. More precisely, we obtain the following result:

\begin{thm}\label{thm:intro1}
Let $F$ be a totally real number field and $G=\GL_{2n}/F$, $n\geq 2$. Let $\Pi$ be a unitary cuspidal automorphic representation of $G(\A)$, which is cohomological with respect to an irreducible, self-contragredient, algebraic, finite-dimensional representation $E_\mu$ of $G_\infty$. Assume that $\Pi$ satisfies the equivalent conditions of Prop.\ \ref{prop:lift}, i.e., the partial exterior square $L$-function $L^S(s,\Pi,\Lambda^2)$ has a pole at $s=1$. Then, for every $\sigma\in {\rm Aut}(\C)$, there is a non-trivial period $p^t({}^\sigma\Pi)$, defined by a comparison of a given rational structure on the Whittaker model of ${}^\sigma\Pi_f$ and a rational structure on a realization of ${}^\sigma\Pi_f$ in cohomology in top degree $t$, and a non-trivial archimedean period $p^t({}^\sigma\Pi_\infty)$, such that
$$
\sigma\left(
\frac{L(\tfrac12,\Pi_f)\cdot {\rm Res}_{s=1}(L^S(s,\Pi,\Lambda^2))}{p^t(\Pi) \ p^t(\Pi_\infty)}\right) \ = \
\frac{L(\tfrac12,{}^\sigma\Pi_f)\cdot {\rm Res}_{s=1}(L^S(s,{}^\sigma\Pi,\Lambda^2))}{p^t({}^\sigma\Pi) \ p^t({}^\sigma\Pi_\infty)}.
$$
In particular,
$$
L(\tfrac12,\Pi_f)\cdot {\rm Res}_{s=1}(L^S(s,\Pi,\Lambda^2)) \ \sim_{\Q(\Pi_f)} \
p^t(\Pi) \ p^t(\Pi_\infty),
$$
where ``$\sim_{\Q(\Pi_f)}$'' means up to multiplication of the right hand side by an element in the number field $\Q(\Pi_f)$.
\end{thm}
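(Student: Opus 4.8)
The plan is to realize the residue of the exterior square $L$-function as the value of a period integral in the sense of Bump--Friedberg, and then to trace the rational structures through the relevant cohomological comparison. First I would recall the Bump--Friedberg integral representation: for $\phi$ in the cuspidal representation $\Pi$ and a suitable Eisenstein series on a smaller subgroup (the Bump--Friedberg unfolding works with $\GL_n \times \GL_n \hookrightarrow \GL_{2n}$), the global zeta integral $Z(s_1,s_2,\phi,\ldots)$ factors, via the uniqueness of the Whittaker model, into a product of local integrals times $L^S(s_1,\Pi)\cdot L^S(s_2,\Pi,\Lambda^2)$ (up to normalization). Specializing the two complex variables appropriately, the pole at $s=1$ of $L^S(s,\Pi,\Lambda^2)$ is detected by the residue of this integral, and the residue of the Eisenstein series involved is (essentially) a constant; what survives is, up to the local fudge factors at $S$ and the archimedean integral, the product $L(\tfrac12,\Pi_f)\cdot\mathrm{Res}_{s=1}(L^S(s,\Pi,\Lambda^2))$ paired against a cusp form $\phi$.

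Next I would pass to cohomology. Since $\Pi$ is cohomological for the self-dual coefficient system $E_\mu$, the finite part $\Pi_f$ contributes to $H^t_{\mathrm{cusp}}(S_G, \widetilde{E_\mu})$ in the top degree $t$ in the cuspidal range (this is the degree relevant to the Bump--Friedberg period, as in the Rankin--Selberg case of Raghuram and of Grobner--Harris--Lapid). There is a rational structure on this cohomology space coming from the Betti/de Rham structure on the locally symmetric space, and a second rational structure on $\Pi_f$ coming from its (new-vector-normalized, or basis-free) Whittaker model; comparing them defines the period $p^t(\Pi)$, well defined up to $\Q(\Pi_f)^\times$. The archimedean period $p^t(\Pi_\infty)$ arises from the corresponding comparison at the infinite places, i.e.\ from a choice of generator of the one-dimensional $(\g,K_\infty)$-cohomology of $\Pi_\infty\otimes E_\mu$ against a fixed Whittaker functional at infinity, together with the archimedean Bump--Friedberg local integral. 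The key point is that the cohomological Bump--Friedberg pairing, being a cup product of rational cohomology classes integrated over a rational cycle, lands in $\Q(\Pi_f)$ after division by $p^t(\Pi)\,p^t(\Pi_\infty)$; this gives the displayed identity up to $\Q(\Pi_f)$, and the full $\mathrm{Aut}(\C)$-equivariance follows because every ingredient (the rational structures, the Eisenstein class whose residue we take, the local zeta integrals at $v\in S$) is $\sigma$-equivariant, using Waldspurger's/Clozel's results on $\sigma\Pi$ being again cuspidal cohomological and the Galois-equivariance of the Whittaker model. The factor $L(\tfrac12,\Pi_f)$ appears exactly because, after setting the Bump--Friedberg variables to make $L^S(s,\Pi,\Lambda^2)$ have its pole at $s=1$, the companion variable forces the standard $L$-function to be evaluated at the center $s=\tfrac12$; one checks this central value is nonzero (it is, by the non-vanishing results available for lifts from $\mathrm{SO}(2n+1)$, or one simply carries it along as a factor) so that the ratio is meaningful.

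I expect the main obstacle to be the archimedean analysis: namely, establishing that the archimedean Bump--Friedberg zeta integral, when transported to $(\g,K_\infty)$-cohomology, is \emph{nonzero} and defines a genuine period $p^t(\Pi_\infty)$, rather than vanishing for parity or branching reasons. This is precisely why $F$ is required to be totally real and $N=2n$ even with $n\geq 2$: one needs the relevant archimedean components $\Pi_\infty$ to be cohomological for a self-dual $E_\mu$, the top cohomological degree $t$ to support a nonzero class, and the $\mathrm{GL}_n\times\mathrm{GL}_n$ model to pair nontrivially with it. Handling this will require an explicit understanding of the cohomological test vector at infinity and of the restriction of cohomology classes along the embedding $\mathrm{GL}_n\times\mathrm{GL}_n\hookrightarrow\mathrm{GL}_{2n}$ — this is the content that, in the Rankin--Selberg setting, goes back to work of Kazhdan--Mazur--Schmidt, Mahnkopf, Raghuram, and Sun, and here must be adapted to the Bump--Friedberg period. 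A secondary technical point is tracking the residue operation through the rational structure on the Eisenstein cohomology, i.e.\ showing that taking $\mathrm{Res}_{s=1}$ is compatible with the $\mathrm{Aut}(\C)$-action on the arithmetically defined Eisenstein classes; this is analogous to, and can be modeled on, the residue arguments in \cite{grob_harris_lapid}.
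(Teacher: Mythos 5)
Your strategy is the same as the paper's: interpret the Bump--Friedberg period integral over $Z(\A)H(F)\backslash H(\A)$ cohomologically (the cycle $\tilde\SSS_H$ has real dimension exactly the top cuspidal degree $t$, which is why $F$ must be totally real), define $p^t(\Pi)$ by comparing the Whittaker and cohomological rational structures, and use Sun's non-vanishing theorem to make sense of the archimedean period; this is precisely the route through Thm.\ \ref{thm:integralrep_extsqr}, Prop.\ \ref{prop:diagram}, Thm.\ \ref{hyp} and Thm.\ \ref{thm:residue}.

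There is, however, a genuine gap at the ramified finite places. You assert that the local zeta integrals at $v\in S$ are $\sigma$-equivariant ``using \dots the Galois-equivariance of the Whittaker model,'' but this is exactly the step that does not follow formally. After dividing by the periods, the two sides of the comparison still carry the factors $\prod_{v\in S\setminus S_\infty}Z_v(\xi_v,f_{v,1})$ and $\prod_{v\in S\setminus S_\infty}Z_v({}^\sigma\xi_v,{}^\sigma\!f_{v,1})$, and one must prove both that $\sigma\bigl(\prod_v Z_v(\xi_v,f_{v,1})\bigr)=\prod_v Z_v({}^\sigma\xi_v,{}^\sigma\!f_{v,1})$ and that these products are non-zero (otherwise the identity degenerates to $0=0$ and yields no ratio statement). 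In the paper the equivariance is the content of Matringe's appendix (Thm.\ A): one expands the local Whittaker function along the diagonal torus (Prop.\ B, after Jacquet--Piatetski-Shapiro--Shalika) and reduces the Bump--Friedberg integral to Tate-type integrals whose ${\rm Aut}(\C)$-behaviour is checked by hand (Prop.\ A); unlike the Rankin--Selberg situation, no essential-vector argument is available here, so this cannot be waved through. The non-vanishing is then obtained by a contradiction argument: if the finite product vanished, $Z(\xi,f_s)/L(n,\triv)$ would be holomorphic at $s=1$, contradicting the pole forced by the hypothesis that $\Pi$ is a lift from ${\rm SO}(2n+1)$ (and $n\geq 2$). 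A smaller misdirection in your plan: no Eisenstein cohomology class enters, so there is no issue of compatibility of ${\rm Res}_{s=1}$ with rational structures on Eisenstein classes; the mirabolic Eisenstein series contributes only its constant residue $c_n\hat\Phi(0)$ (Jacquet--Shalika), which the paper absorbs, together with the volume of the central cycle, by rescaling the archimedean component of the Schwartz--Bruhat function as in \S\ref{sect:conseq_sigma}.
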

This is proved in details in \S\ref{sect:residue}, see Thm.\ \ref{thm:residue}. For a precise definition of the periods $p^t({}^\sigma\Pi)$ and $p^t({}^\sigma\Pi_\infty)$ we refer to Prop.\ \ref{prop:periods}, respectively \eqref{eq:arch_period}, the non-vanishing of the archimedean period $p^t({}^\sigma\Pi_\infty)$ being shown -- building on a result of B. Sun -- in our Thm.\ \ref{hyp}. The number field $\Q(\Pi_f)$ in the theorem is (by Strong Multiplicity One) the aforementioned field of rationality of the cuspidal automorphic representation $\Pi$. See \S\ref{sect:sigmatwist} and \S\ref{sect:rat_struct}.

The key result, which we use, in order to derive the above theorem, is a certain integral-representation, obtained by Bump-Friedberg, \cite{bumbginzburg}, of the residue ${\rm Res}_{s=1}(L^S(s,\Pi,\Lambda^2))$ of the exterior square $L$-function in terms of integrating over a cycle $Z(\A)H(F)\backslash H(\A)$. Here, $Z$ is the centre of $G$ and $H=\GL_n\times\GL_n$, suitably embedded into $G$, cf.\ \ref{sect:GH}. 

More precisely, if one combines the three main results of \cite{bumbginzburg}, then, under the assumptions made in the theorem, one obtains the following equality, shown in our Thm.\ \ref{thm:integralrep_extsqr}:
\begin{equation}\label{eq:BG}
c_n\cdot\hat\Phi(0) \int_{Z(\A)H(F)\backslash H(\A)}\varphi(J(g,g')) \ dg \ dg' = \frac{L^S(\tfrac12,\Pi)\cdot {\rm Res}_{s=1}(L^S(s,\Pi,\Lambda^2))}{L^S(n,\triv)^2}\cdot \prod_{v\in S}\frac{Z_v(\xi_v,f_{v,1})}{L(n,\triv_v)}.
\end{equation}
Here, $\Phi$ is a certain global Schwartz-Bruhat function on $\A^n$, chosen with care in \S\ref{sect:SchwartzBruhat}, and $c_n\cdot\hat\Phi(0)$ is the (non-zero) residue at $s=1$ of an Eisenstein series attached to a section $f_s=\otimes_v f_{v,s}$, which is defined by $\Phi$. See \S\ref{sect:SchwartzBruhat} and \S\ref{sect:BG} for the precise definitions of the terms appearing in \eqref{eq:BG}. What one should observe is that the value of the partial $L$-function $L^S(n,\triv)$ of the trivial character of $\A$ at $n$ appears in the formula. In order for the pole of $L^S(s,\Pi,\Lambda^2)$ at $s=1$ not to cancel with the pole of $L^S(n,\triv)$ at $n=1$, we assumed $n\geq 2$, which explains the corresponding assumption in Thm.\ \ref{thm:intro1} (resp.\ Thm.\ \ref{thm:residue}). (As for the case of $n=1$, $L^S(s,\Pi,\Lambda^2)=L^S(s,\triv)$, the analogue of Thm.\ \ref{thm:intro1} would boil down to a rationality result for the central critical value of the $L$-function of unitary cusp forms of $\GL_2(\A)$, which is known, e.g., by Harder \cite{hardermodsym}. Therefore, considering only $n\geq 2$ is not a serious restriction.)

Observe that the top degree $t$, mentioned in Thm.\ \ref{thm:intro1}, where $\Pi$ has non-trivial cohomology, equals the dimension of the locally symmetric spaces, which are associate to the cycle $Z(\A)H(F)\backslash H(\A)$, cf.\ \S\ref{sect:deRham}. (Here, we necessarily have to use that $F$ is totally real, which explains the last obstruction, set in the beginning.) As a consequence, we may use the de Rham isomorphism. Together with \eqref{eq:BG} and N. Matringe's equivariance-result (Thm.\ A) in the appendix, this finally gives Thm.\ \ref{thm:intro1}.

We point out that, if $\Pi$ satisfies the assumptions made in the theorem, then $\Pi$ automatically satisfies the assumptions made in Grobner-Raghuram \cite{grob-ragh}. Hence, the non-zero periods $\omega^{\epsilon_0}(\Pi_f)$ and $\omega(\Pi_\infty)$ constructed in {\it loc.\ cit.} are well-defined. See our \S\ref{sect:WSext} below for details. If we define the non-zero, top-degree Whittaker-Shalika periods,
$$P^t(\Pi):=\frac{p^t(\Pi)}{\omega^{\epsilon_0}(\Pi_f)}\quad\quad\textrm{and}\quad\quad P^t(\Pi_\infty):=\frac{p^t(\Pi_\infty)}{\omega(\Pi_\infty)},$$
then we may get rid of the $L$-factor $L(\tfrac12,\Pi_f)$ in Thm.\ \ref{thm:intro1} above. The following result is Cor.\ \ref{cor:Wext}.

\begin{cor}
Let $\Pi$ be as in the statement of Thm.\ \ref{thm:intro1} (resp.\ Thm.\ \ref{thm:residue}). Then
$${\rm Res}_{s=1}(L^S(s,\Pi,\Lambda^2)) \ \approx_{\Q(\Pi_f)} \
P^t(\Pi) \ P^t(\Pi_\infty),$$ 
where ``$\approx_{\Q(\Pi_f)}$'' means up to multiplication of both sides by an element in the number field $\Q(\Pi_f)$.
\end{cor}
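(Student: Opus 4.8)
The plan is to deduce the Corollary from Theorem~\ref{thm:intro1} (i.e.\ Thm.~\ref{thm:residue}) by dividing out the Whittaker--Shalika periods $\omega^{\epsilon_0}(\Pi_f)$ and $\omega(\Pi_\infty)$ of \cite{grob-ragh}, whose very definition packages the factor $L(\tfrac12,\Pi_f)$. First I would recall that the hypotheses of Thm.~\ref{thm:intro1} — $\Pi$ cuspidal, cohomological with respect to a self-contragredient $E_\mu$, and with $L^S(s,\Pi,\Lambda^2)$ having a pole at $s=1$ — are precisely (or imply) those under which \cite{grob-ragh} constructs the nonzero periods $\omega^{\epsilon_0}(\Pi_f)$ and $\omega(\Pi_\infty)$; this is the content of \S\ref{sect:WSext}, and it guarantees that $P^t(\Pi)=p^t(\Pi)/\omega^{\epsilon_0}(\Pi_f)$ and $P^t(\Pi_\infty)=p^t(\Pi_\infty)/\omega(\Pi_\infty)$ are well-defined and nonzero. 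The key input from \cite{grob-ragh} is the Shalika-period relation, of the shape $L(\tfrac12,\Pi_f)\sim_{\Q(\Pi_f)}\omega^{\epsilon_0}(\Pi_f)\,\omega(\Pi_\infty)$ (more precisely a $\sigma$-equivariant version of it), expressing the central critical value of the standard $L$-function as an algebraic multiple of the product of these two periods.

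Next I would combine this with Thm.~\ref{thm:intro1}. Writing the latter as
$$
L(\tfrac12,\Pi_f)\cdot{\rm Res}_{s=1}(L^S(s,\Pi,\Lambda^2)) \ \sim_{\Q(\Pi_f)} \ p^t(\Pi)\ p^t(\Pi_\infty),
$$
and substituting $L(\tfrac12,\Pi_f)\sim_{\Q(\Pi_f)}\omega^{\epsilon_0}(\Pi_f)\,\omega(\Pi_\infty)$ on the left, one gets
$$
\omega^{\epsilon_0}(\Pi_f)\,\omega(\Pi_\infty)\cdot{\rm Res}_{s=1}(L^S(s,\Pi,\Lambda^2)) \ \sim_{\Q(\Pi_f)} \ p^t(\Pi)\ p^t(\Pi_\infty),
$$
and dividing both sides by the nonzero quantity $\omega^{\epsilon_0}(\Pi_f)\,\omega(\Pi_\infty)$ yields
$$
{\rm Res}_{s=1}(L^S(s,\Pi,\Lambda^2)) \ \sim_{\Q(\Pi_f)} \ \frac{p^t(\Pi)}{\omega^{\epsilon_0}(\Pi_f)}\cdot\frac{p^t(\Pi_\infty)}{\omega(\Pi_\infty)} \ = \ P^t(\Pi)\,P^t(\Pi_\infty),
$$
which is the asserted statement up to the bookkeeping of which side the $\Q(\Pi_f)$-factor sits on — hence the symmetric notation ``$\approx_{\Q(\Pi_f)}$''. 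To promote the bare $\sim_{\Q(\Pi_f)}$ to the full Galois-equivariance implicit in ``$\approx$'', one tracks the $\sigma\in{\rm Aut}(\C)$-versions throughout: Thm.~\ref{thm:residue} is stated $\sigma$-equivariantly, and the period relation of \cite{grob-ragh} likewise holds for ${}^\sigma\Pi$; since both $\omega^{\epsilon_0}({}^\sigma\Pi_f)$ and $\omega({}^\sigma\Pi_\infty)$ are nonzero, one can divide the $\sigma$-equivariant identities and obtain the corresponding identity for ${}^\sigma\Pi$, which is what the statement encodes.

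The main obstacle I anticipate is not the algebraic manipulation above but the compatibility of the normalizations: one must verify that the rational structures and periods $p^t(\Pi)$, $\omega^{\epsilon_0}(\Pi_f)$, $\omega(\Pi_\infty)$ are all defined with respect to \emph{the same} arithmetic (de Rham / Betti) rational structure on the relevant cohomology and Whittaker/Shalika model spaces, so that the quotients $P^t$ are genuinely well-defined and the substitution of the \cite{grob-ragh} relation into Thm.~\ref{thm:intro1} is legitimate rather than introducing an unaccounted transcendental discrepancy. Concretely this means checking that the Whittaker-model normalization used to define $p^t(\Pi)$ in \S\ref{sect:WSext} agrees with the one underlying $\omega^{\epsilon_0}(\Pi_f)$ in \cite{grob-ragh}, and similarly for the archimedean components; once that is in place — and it is really a matter of unwinding the definitions in \S\ref{sect:WSext} — the Corollary follows immediately from Thm.~\ref{thm:residue} and the Shalika-period theorem of \cite{grob-ragh} by the division just indicated.
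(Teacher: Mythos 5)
Your proposal is correct and follows exactly the paper's route: the paper's proof of this corollary is precisely to combine Thm.\ \ref{thm:residue} with the Shalika-period relation of \cite{grob-ragh}, Thm.\ 7.1.2 (which expresses $L(\tfrac12,\Pi_f)$ in terms of $\omega^{\epsilon_0}(\Pi_f)\,\omega(\Pi_\infty)$, these periods being well-defined because the pole of $L^S(s,\Pi,\Lambda^2)$ at $s=1$ guarantees the Shalika model, as recalled in \S\ref{sect:WSext}), and then divide by the non-zero periods. Your worry about normalizations is harmless here, since all periods involved are only defined up to $\Q(\Pi_f)^\times$, which is exactly the ambiguity the relation ``$\approx_{\Q(\Pi_f)}$'' absorbs.
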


In order to obtain our second main theorem on the symmetric square $L$-function, we need to prove a version of one of the main results of Grobner-Harris-Lapid \cite{grob_harris_lapid} and Balasubramanyam-Raghuram \cite{bala_ragh}, which is tailored to our present situation at hand. This is done in \S\ref{sect:RS}. The main result of this section reads as follows

\begin{thm}\label{thm:intro2}
Let $F$ be a totally real number field and $G=\GL_{2n}/F$, $n\geq 2$. Let $\Pi$ be a self-dual, unitary, cuspidal automorphic representation of $G(\A)$ (with trivial central character), which is cohomological with respect to an irreducible, self-contragredient, algebraic, finite-dimensional representation $E_\mu$ of $G_\infty$. Then, for every $\sigma\in {\rm Aut}(\C)$,
$$
\sigma\left(
\frac{{\rm Res}_{s=1}(L^S(s,\Pi\times\Pi))}{p^t(\Pi) \ p^b(\Pi) \ p(\Pi_\infty)}\right) \ = \
\frac{{\rm Res}_{s=1}(L^S(s,{}^\sigma\Pi\times{}^\sigma\Pi))}{p^t({}^\sigma\Pi) \ p^b({}^\sigma\Pi) \ p({}^\sigma\Pi_\infty)}.
$$
In particular,
$$
{\rm Res}_{s=1}(L^S(s,\Pi\times\Pi)) \ \sim_{\Q(\Pi_f)^\times} \
p^t(\Pi) \ p^b(\Pi) \ p(\Pi_\infty),
$$
where ``$\sim_{\Q(\Pi_f)^\times}$'' means up to multiplication by a non-trivial element in the number field $\Q(\Pi_f)$.
\end{thm}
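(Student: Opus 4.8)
The plan is to realize the residue ${\rm Res}_{s=1}(L^S(s,\Pi\times\Pi))$ as (an algebraic multiple of) a cup-product pairing in the cohomology of the locally symmetric space attached to $G$, exactly in the spirit of Grobner--Harris--Lapid \cite{grob_harris_lapid} and Balasubramanyam--Raghuram \cite{bala_ragh}, and then track the Galois action through every term of that identity. First I would use the Rankin--Selberg integral of Jacquet--Shalika for $\Pi\times\Pi$ together with the fact that, since $\Pi$ is self-dual with trivial central character, the complete $L$-function $L^S(s,\Pi\times\Pi)$ factors as the product of $L^S(s,\Pi,\Lambda^2)$ and $L^S(s,\Pi,{\rm Sym}^2)$; exactly one of these has a simple pole at $s=1$ (the exterior square one, under the standing functorial-lift hypothesis, though for this theorem we do \emph{not} impose it and so must argue generally that the residue is captured by the Rankin--Selberg period). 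Concretely, I would invoke the Jacquet--Shalika integral representation giving the residue as a period integral over $Z(\A)G(F)\backslash G(\A)$ of $\varphi \cdot \overline{\varphi'} \cdot E(g,\Phi)$, where $\varphi,\varphi'$ are cusp forms in $\Pi$ and the Eisenstein series contributes the residue $\hat\Phi(0)$ at $s=1$.

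The second step is the cohomological interpretation. Since $E_\mu$ is self-contragredient, $\Pi\cong\Pi^\vee$ contributes to $H^\bullet(\mathfrak g,K_\infty;\Pi_\infty\otimes E_\mu)$ in a range of degrees, with a bottom degree $b$ and a top degree $t$; the cup product $H^b\otimes H^t\to H^{b+t}$ composed with integration against the fundamental class of the (compact-supported) locally symmetric space of $G$ realizes the Poincar\'e pairing, and the Rankin--Selberg period integral above is exactly this pairing evaluated on rational cohomology classes built from $\varphi$ and $\varphi'$ via the Whittaker--Fourier expansion. This is where the periods $p^t(\Pi)$ and $p^b(\Pi)$ enter: they are the comparison constants between the rational structure on the Whittaker model of $\Pi_f$ and the rational structures on the degree-$t$, respectively degree-$b$, realizations of $\Pi_f$ in cohomology, while $p(\Pi_\infty)$ is the corresponding archimedean comparison constant for the cup-product pairing at infinity (the product of the two one-sided archimedean Whittaker periods, or the archimedean Poincar\'e-pairing period). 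I would set up these definitions carefully in \S\ref{sect:RS}, precisely as in \cite{grob_harris_lapid}, so that the identity
$$
{\rm Res}_{s=1}(L^S(s,\Pi\times\Pi)) \ = \ (\text{algebraic number in } \Q(\Pi_f))\cdot p^t(\Pi)\,p^b(\Pi)\,p(\Pi_\infty)
$$
holds, with the algebraic factor coming from the local zeta integrals at $v\in S$ and from the rational normalizations of the Eisenstein section and the Whittaker functionals.

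The final step is $\sigma$-equivariance. For each $\sigma\in{\rm Aut}(\C)$ I would apply $\sigma$ to the whole cohomological identity: the arithmetic (Betti) rational structure on $H^{b+t}$ of the locally symmetric space of $G$ is $\sigma$-equivariant by the theory of arithmetic groups, so $\sigma$ commutes with the Poincar\'e pairing on rational classes; the $\sigma$-twist ${}^\sigma\Pi$ is again cuspidal, cohomological with respect to ${}^\sigma\!E_\mu$, self-dual with trivial central character (these properties are Galois-stable), hence $p^t({}^\sigma\Pi)$, $p^b({}^\sigma\Pi)$, $p({}^\sigma\Pi_\infty)$ are defined; and $\sigma$ sends the local zeta integrals and rational Eisenstein/Whittaker data at $S$ for $\Pi$ to those for ${}^\sigma\Pi$. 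Matching everything yields the displayed $\sigma$-equivariance, and taking $\sigma\in{\rm Aut}(\C/\Q(\Pi_f))$ gives the $\sim_{\Q(\Pi_f)^\times}$ statement; the non-triviality (i.e.\ that the right-hand side is a \emph{non-zero} algebraic multiple) follows from the non-vanishing of the residue and of all the periods involved. The main obstacle I expect is not the Galois bookkeeping but the archimedean analysis: one must show the archimedean period $p(\Pi_\infty)$ is non-zero and behaves correctly under $\sigma$ acting on the infinity type, which amounts to a non-vanishing statement for an archimedean Rankin--Selberg cup product — here I would lean on the same circle of ideas (Sun's non-vanishing of archimedean Rankin--Selberg periods, as already used for Thm.\ \ref{thm:intro1}) and on the fact that $E_\mu$ self-contragredient forces the relevant $(\mathfrak g,K_\infty)$-cohomology to pair non-degenerately between degrees $b$ and $t$.
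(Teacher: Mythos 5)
Your proposal follows essentially the same route as the paper's proof of Thm.\ \ref{thm:Rankin_Selberg}: the Jacquet--Shalika integral representation of the residue as a pairing over $Z(\A)G(F)\backslash G(\A)$, a cup-product/Poincar\'e-duality diagram comparing Whittaker rational structures with cohomology in degrees $b$ and $t$, ${\rm Aut}(\C)$-equivariance of the essential-vector zeta integrals at the finite places of $S$, and an archimedean non-vanishing input. The only cosmetic difference is the source of that archimedean input: the paper reduces to one place and cites Balasubramanyam--Raghuram, Prop.\ 5.0.3 (which rests on Sun's Rankin--Selberg non-vanishing), rather than the result of \cite{sun} used for Thm.\ \ref{thm:intro1}.
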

In the statement of the latter theorem, $p^t(\Pi)$ is the top-degree period defined above, while $p^b(\Pi)$ is defined analogously, but using the lowest degree $b$, where $\Pi$ has non-trivial cohomology. The non-vanishing archimedean period $p(\Pi_\infty)$ is defined in \eqref{eq:arch_period_RS}. We refer to \S\ref{sect:bottom_periods} and \S\ref{sect:archimedean_RS} for precise assertions and definitions concerning these periods. The proof of Thm.\ \ref{thm:intro2} is finally given in \S\ref{sect:Rankin_Selberg}, Thm.\ \ref{thm:Rankin_Selberg} and relies on the usual unfolding of the Rankin--Selberg integral, \S\ref{sect:integralrep_RS}.\\\\
The second main theorem of this paper finally deals with the value of the symmetric square $L$-function at $s=1$. Recall that we have
$$L^S(s,\Pi\times\Pi)=L^S(s,\Pi,{\rm Sym}^2)\cdot L^S(s,\Pi,\Lambda^2).$$
As by assumption $L^S(s,\Pi,\Lambda^2)$ carries to (simple) pole of $L^S(s,\Pi\times\Pi)$ at $s=1$, the symmetric square $L$-function $L^S(s,\Pi,{\rm Sym}^2)$ is holomorphic and non-vanishing at $s=1$. Our second main theorem hence follows by combining Thm.\ \ref{thm:intro1} (resp.\ Thm.\ \ref{thm:residue}) with Thm.\ \ref{thm:intro2} (resp.\ Thm.\ \ref{thm:Rankin_Selberg}). We obtain, see Thm.\ \ref{thm:Symm2},

\begin{thm}\label{thm:intro3}
Let $F$ be a totally real number field and $G=\GL_{2n}/F$, $n\geq 2$. Let $\Pi$ be a unitary cuspidal automorphic representation of $G(\A)$, which is cohomological with respect to an irreducible, self-contragredient, algebraic, finite-dimensional representation $E_\mu$ of $G_\infty$. Assume that $\Pi$ satisfies the equivalent conditions of Prop.\ \ref{prop:lift}, i.e., the partial exterior square $L$-function $L^S(s,\Pi,\Lambda^2)$ has a pole at $s=1$. Then, for every $\sigma\in {\rm Aut}(\C)$,
$$
\sigma\left(
\frac{L(\tfrac12,\Pi_f)\ p^b(\Pi) \ p^b(\Pi_\infty)}{L^S(1,\Pi,{\rm Sym}^2)}\right) \ = \
\frac{L(\tfrac12,{}^\sigma\Pi_f)\ p^b({}^\sigma\Pi) \ p^b({}^\sigma\Pi_\infty)}{L^S(1,{}^\sigma\Pi,{\rm Sym}^2)}.
$$
In particular,
$$
L^S(1,\Pi,{\rm Sym}^2) \sim_{\Q(\Pi_f)} L(\tfrac12,\Pi_f)\ p^b(\Pi) \ p^b(\Pi_\infty)
$$
where ``$\sim_{\Q(\Pi_f)}$'' means up to multiplication of $L^S(1,\Pi,{\rm Sym}^2)$ by an element in the number field $\Q(\Pi_f)$.
\end{thm}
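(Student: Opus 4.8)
The plan is to deduce Theorem~\ref{thm:intro3} by combining the two rationality results already established in the excerpt, namely Theorem~\ref{thm:intro1} (the residue of the exterior square $L$-function) and Theorem~\ref{thm:intro2} (the residue of the Rankin--Selberg $L$-function $L^S(s,\Pi\times\Pi)$), together with the factorization
\begin{equation*}
L^S(s,\Pi\times\Pi)=L^S(s,\Pi,{\rm Sym}^2)\cdot L^S(s,\Pi,\Lambda^2).
\end{equation*}
First I would record that, under the running assumptions, $L^S(s,\Pi,\Lambda^2)$ has a \emph{simple} pole at $s=1$ (this is part of Prop.~\ref{prop:lift}) and that $L^S(s,\Pi\times\Pi)$ likewise has a simple pole at $s=1$ coming from the Rankin--Selberg theory of $\GL_{2n}\times\GL_{2n}$ for self-dual $\Pi$; hence the symmetric square factor $L^S(s,\Pi,{\rm Sym}^2)$ is holomorphic and \emph{non-zero} at $s=1$, and one has the clean identity
\begin{equation*}
L^S(1,\Pi,{\rm Sym}^2)=\frac{{\rm Res}_{s=1}\bigl(L^S(s,\Pi\times\Pi)\bigr)}{{\rm Res}_{s=1}\bigl(L^S(s,\Pi,\Lambda^2)\bigr)}.
\end{equation*}

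Next I would feed the two period identities into this quotient. From Theorem~\ref{thm:intro2}, $\sigma$ applied to ${\rm Res}_{s=1}(L^S(s,\Pi\times\Pi))/\bigl(p^t(\Pi)\,p^b(\Pi)\,p(\Pi_\infty)\bigr)$ equals the same expression for ${}^\sigma\Pi$; from Theorem~\ref{thm:intro1}, $\sigma$ applied to $L(\tfrac12,\Pi_f)\cdot{\rm Res}_{s=1}(L^S(s,\Pi,\Lambda^2))/\bigl(p^t(\Pi)\,p^t(\Pi_\infty)\bigr)$ is $\sigma$-equivariant as well. Dividing the first relation by the second, the common top-degree period $p^t(\Pi)$ cancels, and one is left with
\begin{equation*}
\sigma\!\left(\frac{L(\tfrac12,\Pi_f)\cdot{\rm Res}_{s=1}(L^S(s,\Pi\times\Pi))}{L^S(1,\Pi,{\rm Sym}^2)\cdot\ {\rm Res}_{s=1}(L^S(s,\Pi,\Lambda^2))^{-1}\cdot\text{(denominators)}}\right)\ =\ (\text{same for }{}^\sigma\Pi),
\end{equation*}
which after using the quotient identity above and regrouping the period factors becomes exactly the assertion of Theorem~\ref{thm:intro3}, with the archimedean periods combining as $p^t(\Pi_\infty)/p(\Pi_\infty)\cdot(\dots)$; one checks that the resulting archimedean quantity is, up to the normalizations fixed in \S\ref{sect:archimedean_RS} and \eqref{eq:arch_period}, precisely $p^b(\Pi_\infty)$ as defined there (here one uses that $E_\mu$ is self-contragredient, so the cohomological degrees $t$ and $b$ are symmetric about the middle and Poincar\'e duality relates the top- and bottom-degree archimedean periods). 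The ``in particular'' statement is then the usual consequence: taking $\sigma$ to range over ${\rm Aut}(\C/\Q(\Pi_f))$ shows the ratio is fixed by that group, hence lies in $\Q(\Pi_f)$, using that $\Q({}^\sigma\Pi_f)=\sigma(\Q(\Pi_f))$ and that all the periods and $L$-values in sight are nonzero.

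The one genuinely non-formal point — and the step I expect to require the most care — is bookkeeping the archimedean periods: Theorem~\ref{thm:intro1} carries $p^t(\Pi_\infty)$ attached to the Bump--Friedberg cycle, while Theorem~\ref{thm:intro2} carries $p(\Pi_\infty)$ attached to the Rankin--Selberg unfolding, and the output is meant to feature $p^b(\Pi_\infty)$. One must verify that these archimedean invariants are mutually compatible, i.e.\ that $p^t(\Pi_\infty)\cdot p^b(\Pi_\infty)\sim_{\Q} p(\Pi_\infty)$ up to rational (indeed $\Q(\Pi_f)$-, but these archimedean factors are defined over $\Q$ or a cyclotomic field) factors, which is the content of how the periods were set up in \S\ref{sect:archimedean_RS}; granting that normalization, the rest is pure algebra. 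I would also double check that no spurious $L^S(n,\triv)$-type factors survive: in Theorem~\ref{thm:intro1} these have already been absorbed (the $n\ge 2$ hypothesis guarantees $L^S(n,\triv)$ is a nonzero rational-up-to-powers-of-$\pi$ constant, handled in \S\ref{sect:residue}), so nothing further intrudes here. Modulo that single compatibility check, the proof is a two-line division of the two displayed $\sigma$-equivariances.
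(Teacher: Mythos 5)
Your proposal is correct and takes essentially the same route as the paper: Thm.\ \ref{thm:Symm2} is obtained by combining Thm.\ \ref{thm:residue} and Thm.\ \ref{thm:Rankin_Selberg} through the factorization $L^S(s,\Pi\times\Pi)=L^S(s,\Pi,{\rm Sym}^2)\cdot L^S(s,\Pi,\Lambda^2)$, the resulting residue identity, the non-vanishing of $L^S(1,\Pi,{\rm Sym}^2)$, and a division of the two $\sigma$-equivariances in which $p^t(\Pi)$ cancels. The single ``compatibility check'' you flag is vacuous in the paper: the bottom-degree archimedean period is simply \emph{defined} by $p^b(\Pi_\infty):=p(\Pi_\infty)/p^t(\Pi_\infty)$ immediately before Thm.\ \ref{thm:Symm2} (both factors being non-zero by Thm.\ \ref{hyp} and Thm.\ \ref{thm:arch_RS}), so no Poincar\'e-duality argument is needed and the remaining steps are exactly the algebra you describe, with the ``in particular'' assertion following, as in the paper, from Strong Multiplicity One.
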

Similar to before, we may define bottom-degree Whittaker-Shalika periods. Set
$$P^b(\Pi):=p^b(\Pi)\cdot\omega^{\epsilon_0}(\Pi_f)\quad\quad\textrm{and}\quad\quad P^b(\Pi_\infty):=p^b(\Pi_\infty)\cdot\omega(\Pi_\infty).$$
Then, we have the following corollary, see Cor.\ \ref{cor:Wsymm}, in which we get once more rid of the $L$-factor $L(\tfrac12,\Pi_f)$.

\begin{cor}
Let $\Pi$ be as in the statement of Thm.\ \ref{thm:intro3} (resp.\ Thm.\ \ref{thm:Symm2}). Then
$$L^S(1,\Pi,{\rm Sym}^2)\ \approx_{\Q(\Pi_f)} \ P^b(\Pi) \ P^b(\Pi_\infty),$$
where ``$\approx_{\Q(\Pi_f)}$'' means up to multiplication of both sides by an element in the number field $\Q(\Pi_f)$.
\end{cor}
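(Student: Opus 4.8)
The plan is to deduce the corollary formally from Theorem~\ref{thm:intro3} (equivalently, from its detailed version Theorem~\ref{thm:Symm2}), by replacing the single arithmetic factor $L(\tfrac12,\Pi_f)$ occurring there with an algebraic multiple of the modular Shalika periods of $\Pi$. Indeed, Theorem~\ref{thm:intro3} already yields
\[
L^S(1,\Pi,{\rm Sym}^2)\ \sim_{\Q(\Pi_f)}\ L(\tfrac12,\Pi_f)\,p^b(\Pi)\,p^b(\Pi_\infty),
\]
so, once $L(\tfrac12,\Pi_f)$ is expressed through $\omega^{\epsilon_0}(\Pi_f)$ and $\omega(\Pi_\infty)$, the assertion falls out of the defining relations $P^b(\Pi)=p^b(\Pi)\,\omega^{\epsilon_0}(\Pi_f)$ and $P^b(\Pi_\infty)=p^b(\Pi_\infty)\,\omega(\Pi_\infty)$ by a single regrouping of factors.

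For that middle step I would invoke the very relation that underlies the passage from $p^t(\Pi)$ to the top-degree Whittaker--Shalika period in \S\ref{sect:WSext}, namely the rationality of the central value of the standard $L$-function of $\Pi$ in terms of Shalika periods, due to Grobner--Raghuram \cite{grob-ragh}:
\[
L(\tfrac12,\Pi_f)\ \approx_{\Q(\Pi_f)}\ \omega^{\epsilon_0}(\Pi_f)\,\omega(\Pi_\infty).
\]
As recorded in the introduction, the standing hypotheses on $\Pi$ imply those of \emph{loc.\ cit.}: since $L^S(s,\Pi,\Lambda^2)$ has a pole at $s=1$, the Jacquet--Shalika criterion provides a global $(\triv,\psi)$-Shalika model for $\Pi$, and, $E_\mu$ being irreducible, algebraic and self-contragredient, the remaining purity and criticality conditions hold, so the non-zero periods $\omega^{\epsilon_0}(\Pi_f)$ and $\omega(\Pi_\infty)$ are at our disposal; the trivial central character of $\Pi$ moreover renders the auxiliary Gauss-sum normalisation rational. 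Substituting the displayed relation into Theorem~\ref{thm:intro3} and regrouping,
\[
L^S(1,\Pi,{\rm Sym}^2)\ \approx_{\Q(\Pi_f)}\ \bigl(p^b(\Pi)\,\omega^{\epsilon_0}(\Pi_f)\bigr)\bigl(p^b(\Pi_\infty)\,\omega(\Pi_\infty)\bigr)\ =\ P^b(\Pi)\,P^b(\Pi_\infty),
\]
which is the corollary.

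There is essentially no new obstacle here: Theorem~\ref{thm:Symm2} and the theorem of \cite{grob-ragh} carry all the weight. The only points worth double-checking are that $s=\tfrac12$ genuinely lies in the critical range of $L(s,\Pi)$ --- which for these functorial lifts from ${\rm SO}(2n+1)$ follows from the shape of the infinitesimal character of $\Pi_\infty$ imposed by the cohomological condition, and is taken care of in \S\ref{sect:WSext} --- and the bookkeeping of the various normalising constants, which collapses because $\Pi$ has trivial central character and the relevant twist is trivial. (Incidentally, $L(\tfrac12,\Pi_f)\neq0$ here, as one reads off from Theorem~\ref{thm:intro3} and the non-vanishing of $L^S(1,\Pi,{\rm Sym}^2)$.) Finally, I would stress that, unlike the main theorems, the corollary is obtained only in the weaker two-sided form ``$\approx_{\Q(\Pi_f)}$'', because the Shalika periods $\omega^{\epsilon_0}(\Pi_f)$ and $\omega(\Pi_\infty)$ --- hence $P^b(\Pi)$ and $P^b(\Pi_\infty)$ --- are well defined only up to $\Q(\Pi_f)^\times$; for the same reason no $\sigma$-equivariant refinement is claimed.
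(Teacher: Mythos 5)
Your argument is exactly the paper's: the corollary is obtained by combining Thm.\ \ref{thm:Symm2} with the central-value rationality result of Grobner--Raghuram (\cite{grob-ragh}, Thm.\ 7.1.2), which expresses $L(\tfrac12,\Pi_f)$ through $\omega^{\epsilon_0}(\Pi_f)\,\omega(\Pi_\infty)$, and then regrouping via the definitions of $P^b(\Pi)$ and $P^b(\Pi_\infty)$; the applicability of \emph{loc.\ cit.} (existence of the Shalika models and periods under the pole hypothesis) is checked in \S\ref{sect:WSext}--\S\ref{sect:WSsymm} just as you indicate. So the proposal is correct and follows essentially the same route as the paper.
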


\noindent{\it Acknowledgements: } \small
I am grateful to Erez Lapid for suggesting me to work in this direction.
\normalsize

\section{Notation and conventions}

\subsection{Number fields}
In this paper, $F$ denotes a totally real number field of degree $d=[F:\Q]$ with ring of integers $\O$. For a place $v$, let $F_v$ be the topological completion of $F$ at $v$. Let $S_\infty$ be the set of archimedean places of $F$. If $v \notin S_{\infty}$, we let $\O_v$ be the local ring of integers of $F_v$ with unique maximal ideal $\wp_v$. Moreover, $\A$ denotes the ring of ad\`eles of $F$ and as usually, $\A_f$ its finite part. We use the local and global normalized absolute values, the first being denoted by $|\cdot|_v$, the latter by $\|\cdot\|$. The fact that $F$ has no complex place is crucial, see \S\ref{sect:deRham}. Once and for all, we fix a non-trivial additive character $\psi: F\backslash\A\ra\C^\times$ (normalized where it is unramified).

\subsection{Algebraic groups an real Lie groups}\label{sect:GH}
Throughout this paper $G$ denotes $\GL_{2n}/F$, $n\geq 2$, the split general linear group over $F$. Although much of the paper works also for $\GL_N$ with $N$ arbitrary (e.g., the diagram \ref{diagram}), it will be crucial for the main result that $N=2n$ is even (because only then, the exterior square $L$-function may have a pole, \cite{jac_shal_AnnArbor}, Thm.\ 2, p.\ 224) and that $n\geq 2$ (because the $\zeta$-function attached to $F$ has a pole at $n$, if $n=1$). Let $H$ be $\GL_n\times \GL_n$ over $F$. We identify $H$ with a subgroup of $G$, defined as the image of the homomorphism $J:\GL_n\times \GL_n \ra \GL_{2n}$, where
$$J(g,g')_{k,l}:=\left\{
\begin{array}{ll}
g'_{i,j} & \textrm{if $k=2i-1$ and $l=2j-1$}\\
g_{i,j} & \textrm{if $k=2i$ and $l=2j$}\\
0 & \textrm{else}
\end{array}
\right.$$
The center of $G/F$ is denoted $Z/F$. We write $G_\infty:=R_{F/\Q}(G)(\R)$ (resp., $H_\infty:=R_{F/\Q}(H)(\R)$ or $Z_\infty:=R_{F/\Q}(Z)(\R)$), where $R_{F/\Q}$ stands for Weil's restriction of scalars. Lie algebras of real Lie groups are denoted by the same letter, but in lower case gothics.

At an archimedean place $v\in S_{\infty}$ we let $K_v$ be a maximal compact subgroup of the real Lie group $G(F_v)=\GL_{2n}(\R)$. It is isomorphic to $O(2n)$. We write $K_v^\circ$ for the connected component of the identity of $K_v$, which is isomorphic to $SO(2n)$. We set $K_\infty:=\prod_{v\in S_\infty} K_v$ and $K^\circ_\infty:=\prod_{v\in S_\infty} K^\circ_v$. Moreover, we denote by $K_{H,v}$ the intersection $K_v\cap H(F_v)$, which is a maximal compact subgroup of $H(F_v)$, isomorphic to $O(n)\times O(n)$. As before, we write $K^\circ_{H,v}$ for the connected component of the identity and we let $K_{H,\infty}:=\prod_{v\in S_\infty} K_{H,v}$ and $K^\circ_{H,\infty}:=\prod_{v\in S_\infty} K^\circ_{H,v}$.

Let $A_G$ be the multiplicative group of positive real numbers $\R_+$, being diagonally embedded into the center $Z_{\infty}$ of $G_\infty$. It is a direct complement of the group $G(\A)^{1}:=\{g\in G(\A)| \|\det(g)\|=1\}$. According to our conventions, the Lie algebra of the real Lie group $A_G$ is denoted $\a_G$. Furthermore, we let $\m_G:=\g_\infty/\a_G$, $\m_H:=\h_\infty/\a_G$ and $\s:=\z_\infty/\a_G$. Observe that these spaces are Lie subalgebras of $\g_\infty$.

\subsection{Coefficient modules}\label{sect:highweights}
In this paper, $E_\mu$ denotes an irreducible, algebraic representation of $G_{\infty}$ on a finite-dimensional
complex vector space. It is determined by its highest weight $\mu=(\mu_v)_{v\in S_\infty}$, whose local components at an archimedean place $v$ may be identified with $\mu_v=(\mu_{1,v},...,\mu_{2n,v})\in\Z^{2n}$, $\mu_{1,v}\geq\mu_{2,v}\geq...\geq\mu_{2n,v}$.
We assume that $E_\mu$ is self-dual, i.e., it is isomorphic to its contragredient, $E_\mu\cong E_\mu^\vee$, or, in other words, that
$$\mu_{j,v}+\mu_{2n-j+1,v}=0, \quad\quad 1\leq j\leq n$$
at all places $v\in S_\infty$. Clearly, this condition implies that $\mu_{n,v}\geq 0\geq \mu_{n+1,v}$ for all $v\in S_\infty$. The self-duality hypothesis, hence incorporates that $\dim_\C\Hom_{H(\C)}(E_{\mu_v},\C)=1$ for all $v\in S_\infty$. (See \cite{grob-ragh},  Prop.\ 6.3.1.)

\subsection{Cohomology of Locally symmetric spaces}\label{scet:loc_sym_spaces}
Define the orbifolds
$$\SSS_G:=G(F)\backslash G(\A)/A_G K^\circ_\infty=G(F)\backslash G(\A)^{1}/K^\circ_\infty$$
and
$$\tilde\SSS_H:=H(F)\backslash H(\A)/A_G K^\circ_{H,\infty}.$$
A representation $E_\mu$ as in \S\ref{sect:highweights} defines a locally constant sheaf $\E_\mu$ on $\SSS_G$, whose espace \'etal\'e is $G(\A)^1/K^\circ_\infty\times_{G(F)} E_\mu$ (with the discrete topology on $E_\mu$). Along the closed map $\mathcal J:\tilde\SSS_H\rightarrow\SSS_G$, which is induced by $J$, \S\ref{sect:GH}, we also obtain a sheaf on $\tilde\SSS_H$, which we will again denote by $\E_\mu$. Let $H^q_c(\SSS_G,\E_\mu)$ (resp.\ $H_c^q(\tilde\SSS_H,\E_\mu)$) be the corresponding space of sheaf cohomology with compact support. This is an admissible $G(\A_{f})$-module (resp.\ $H(\A_f)$-module), cf.\ \cite{rohlfs}, Cor.\ 2.13. Observe that the closed map $\mathcal J$ from above gives rise to a non-trivial $H(\A_f)$-equivariant map
$$\mathcal J^q_\mu: H_c^q(\SSS_G,\E_\mu)\ra H_c^q(\tilde\SSS_H,\E_\mu).$$

\subsection{Complex automorphisms and rational structures}\label{sect:sigmatwist}
For $\sigma\in \textrm{Aut}(\C)$, let us define the $\sigma$-twist ${}^\sigma\!\nu$ of an (abstract) representation $\nu$ of $G(\A_f)$ (resp., $G(F_v)$, $v\notin S_\infty$) on a complex vector space $W$, following Waldspurger \cite{waldsp}, I.1: If $W'$ is a $\C$-vector space with a $\sigma$-linear isomorphism $\phi:W\ra W'$ then we set
$${}^\sigma\!\nu:= \phi\circ\nu\circ \phi^{-1}.$$
This definition is independent of $\phi$ and $W'$ up to equivalence of representations, whence we may always take $W':=W\otimes_{\sigma}\C$, i.e., the abelian group $W$ endowed with the scalar multiplication $\lambda\cdot_\sigma w:=\sigma^{-1}(\lambda)w$. \\\\
Furthermore, if $\nu_\infty=\bigotimes_{v\in S_{\infty}}\nu_{v}$ is an (abstract) representation of the real Lie group $G_\infty$, we let $${}^{\sigma}\!\nu_\infty:=\bigotimes_{v\in S_{\infty}}\nu_{\sigma^{-1} v},$$
interpreting $v\in S_{\infty}$ as an embedding of fields $v:F\hookrightarrow\R$. Combining these two definitions, we may define the $\sigma$-twist on a global representation $\nu=\nu_\infty\otimes\nu_f$ of $G(\A)$ as
$${}^\sigma\!\nu:={}^{\sigma}\!\nu_\infty\otimes {}^\sigma\!\nu_f.$$
We recall also the definition of the rationality field of a representation from \cite{waldsp}, I.1. If $\nu$ is any of the representations considered above, then let $\mathfrak S(\nu)$ be the group of all automorphisms $\sigma\in \textrm{Aut}(\C)$ such that ${}^\sigma\!\nu\cong\nu $. Then the rationality field $\Q(\nu)$ of $\nu$ is defined as the fixed-field of $\mathfrak S(\nu)$ within $\C$, i.e.,
$$\Q(\nu):=\{z\in\C| \sigma(z)=z \textrm{ for all } \sigma\in\mathfrak S(\nu)\}.$$
We say that a representation $\nu$ on a $\C$-vector space $W$ is defined over a subfield $\F\subset\C$, if there is a $\F$-vector subspace $W_\F\subset W$, stable under the given action, such that the canonical map $W_\F\otimes_\F\C\rightarrow W$ is an isomorphism. The following lemma is due to Clozel, \cite{clozel}, p.\ 122 and p.\ 128. (See also \cite{grobrag}, Lem.\ 7.1.)

\begin{lem}
Let $E_\mu$ be an irreducible, algebraic representation as in \S \ref{sect:highweights}. As a representation of the diagonally embedded group $G(F)\hra G_\infty$, ${}^\sigma\!E_\mu$ is isomorphic to the abstract representation $E_\mu\otimes_\sigma\C$. Moreover, as a representation of $G(F)$, $E_\mu$ is defined over $\Q(E_\mu)$.
\end{lem}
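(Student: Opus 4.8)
The plan is to prove the statement in two parts, corresponding to the two assertions of the lemma, and in both cases to reduce everything to the theory of highest-weight modules for the restriction of scalars group together with the fact that algebraic representations of a split reductive group are defined over the prime field. First I would unwind the definition of the $\sigma$-twist on $E_\mu = \bigotimes_{v\in S_\infty} E_{\mu_v}$ as a representation of $G_\infty = \prod_{v\in S_\infty}\GL_{2n}(\R)$: by definition ${}^\sigma\!E_\mu = \bigotimes_{v\in S_\infty} E_{\mu_{\sigma^{-1}v}}$, where one identifies an archimedean place $v$ with a field embedding $v\colon F\hookrightarrow\R$. The key observation is that when we restrict along the \emph{diagonal} embedding $G(F)\hookrightarrow G_\infty$, permuting the tensor factors does nothing — the action of $\gamma\in G(F)$ on the $v$-th factor is through the embedding $v$, and relabelling the factors by $\sigma$ amounts to precomposing each embedding with $\sigma$, which is exactly what passing from $E_\mu$ to $E_\mu\otimes_\sigma\C$ does on the level of matrix coefficients (the entries of $\gamma$ get replaced by their $\sigma^{-1}$-images, i.e. scalars act via $\sigma^{-1}$). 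So the first claim is essentially a bookkeeping identity about how $\sigma$ acts on the coordinates, and I would phrase it via matrix coefficients or via a highest-weight-vector argument.

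For the first assertion I would argue as follows. Fix a highest weight vector and write the representation $E_{\mu_v}$ of $\GL_{2n}(\mathbb C)$ — or better, of the algebraic group $\GL_{2n}$ — in terms of its realization on a space of polynomials in the matrix entries (Borel--Weil, or simply Weyl's construction), so that the action of a matrix $g$ is given by polynomials with \emph{integer} (in any case rational) coefficients in the entries of $g$. Then restriction to $G(F)$ sends $\gamma\mapsto (v(\gamma))_v$, and the $\sigma$-twist replaces each coordinate $v(\gamma_{ij})$ by $\sigma^{-1}(v(\gamma_{ij})) = (\sigma^{-1}v)(\gamma_{ij})$. Since $\sigma^{-1}$ permutes the embeddings $S_\infty$, the resulting representation is visibly $\bigotimes_v E_{\mu_{\sigma^{-1}v}}$ restricted to the diagonal, and simultaneously it is, by the coordinate description, the abstract twist $E_\mu\otimes_\sigma\C$. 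I would reference Clozel, \cite{clozel}, p.\ 122 and p.\ 128, and \cite{grobrag}, Lem.\ 7.1, for the precise statement, essentially attributing the argument there.

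For the second assertion — that $E_\mu$ is defined over $\Q(E_\mu)$ as a representation of $G(F)$ — I would combine the first assertion with the general descent principle: a representation $\nu$ on a $\mathbb C$-vector space is defined over a subfield $\mathbb F\subset\mathbb C$ as soon as ${}^\sigma\!\nu\cong\nu$ for every $\sigma\in\mathrm{Aut}(\mathbb C/\mathbb F)$ and the relevant cohomological obstruction vanishes; in the present case one can be completely explicit because $\GL_{2n}$ is split and its algebraic representation $E_{\mu_v}$ is defined over $\mathbb Q$ (indeed over $\mathbb Z$) via Weyl's construction, so the tensor product $\bigotimes_v E_{\mu_v}$, together with the diagonal $G(F)$-action, has an evident model over the field generated by the images of $F$ under all the $v$, and then over the subfield fixed by $\mathfrak S(E_\mu)$. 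Concretely I would take the $\Q(E_\mu)$-span of a suitable lattice of tensors of Gelfand--Tsetlin-type basis vectors and check it is $G(F)$-stable and of the right dimension after base change. Again this is in Clozel and in \cite{grobrag}, Lem.\ 7.1, and I would simply cite it.

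The main obstacle is not any deep input but rather the notational care needed to track the two a priori different ``twists'' — the geometric one permuting archimedean factors and the algebraic one twisting scalars by $\sigma$ — and to verify they agree after restriction to the diagonally embedded $G(F)$; a reader could easily get the direction of $\sigma$ versus $\sigma^{-1}$ wrong, or forget that only the diagonal restriction makes the permutation invisible (on all of $G_\infty$ the two representations are genuinely different). So in the write-up I would spend the bulk of the space making the matrix-coefficient computation airtight and then dispatch the ``defined over $\Q(E_\mu)$'' part in one sentence by descent, pointing to the cited references for both halves.
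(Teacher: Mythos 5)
Your proposal is correct and matches the paper, which in fact gives no independent argument at all: it establishes the lemma solely by citing Clozel (p.~122 and p.~128) and \cite{grobrag}, Lem.~7.1 --- precisely the references you defer to. Your matrix-coefficient computation in a $\Q$-rational basis for the first assertion and the descent remark for the second are the standard arguments behind those citations, and apart from the $\sigma$-versus-$\sigma^{-1}$ bookkeeping you yourself flag (which cancels on both sides of the comparison), there is nothing to object to.
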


We fix once and for all a $\Q(E_\mu)$-structure on $E_\mu$ as a representation of $G(F)$. Clearly, this also fixes a $\Q(E_\mu)$-structure on $E_\mu$ as a representation of $H(F)$. As a consequence, the $G(\A_f)$-module $H_c^q(\SSS_G,\E_\mu)$ and the $H(\A_f)$-module $H_c^q(\tilde\SSS_H,\E_\mu)$ carry a fixed, natural $\Q(E_\mu)$-structure, cf.\ \cite{clozel} p.\ 123. Moreover, this also pins down natural $\sigma$-linear, equivariant isomorphisms
\begin{equation}\label{eq:H_mu}
\mathcal H^{\sigma,q}_\mu: H^q_c(\SSS_G,\E_\mu)\ira H^q_c(\SSS_G,{}^\sigma\!\E_\mu)\quad\quad\textrm{and}\quad\quad \tilde{\mathcal H}^{\sigma,q}_\mu: H^q_c(\tilde\SSS_H,\E_\mu)\ira H^q_c(\tilde\SSS_H,{}^\sigma\!\E_\mu)
\end{equation}
for all $\sigma\in$ Aut$(\C)$, cf.\ \cite{clozel} p.\ 128. The following lemma is obvious.

\begin{lem}\label{lem:Qstructures}
For all $\sigma\in {\rm Aut}(\C)$ the following diagram commutes,
$$
\xymatrix{H^q_c(\SSS_G,\E_\mu) \ar[rr]^{\mathcal J^q_\mu} \ar[d]_{\mathcal H^{\sigma,q}_\mu}& & H^q_c(\tilde\SSS_H,\E_\mu)\ar[d]_{\tilde{\mathcal H}^{\sigma,q}_\mu}  \\
H^q_c(\SSS_G,{}^\sigma\!\mathcal E_\mu) \ar[rr]^{\mathcal J^q_{{}^\sigma\!\mu}} && H^q_c(\tilde\SSS_H,{}^\sigma\!\mathcal E_\mu)}
$$
\end{lem}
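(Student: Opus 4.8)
The plan is to reduce the assertion to the plain functoriality of extension of scalars, once one unwinds how all four arrows of the square are constructed from the single fixed $\Q(E_\mu)$-structure. First I would recall, following Clozel \cite{clozel}, that the chosen $\Q(E_\mu)$-structure on $E_\mu$ as a $G(F)$-module gives rise to a $\Q(E_\mu)$-rational local system underlying $\E_\mu$ on $\SSS_G$, call it $\E_\mu^\circ$, and a canonical identification
$$H^q_c(\SSS_G,\E_\mu)\;=\;H^q_c(\SSS_G,\E_\mu^\circ)\otimes_{\Q(E_\mu)}\C,$$
together with the analogous data on $\tilde\SSS_H$ obtained by restriction of the $\Q(E_\mu)$-structure to $H(F)$. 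With respect to these identifications, and the identification of the target with $H^q_c(\SSS_G,\E_\mu^\circ)\otimes_{\Q(E_\mu),\sigma}\C$, the $\sigma$-linear isomorphism $\mathcal H^{\sigma,q}_\mu$ is by its very definition the map $\mathrm{id}\otimes\sigma$, and likewise $\tilde{\mathcal H}^{\sigma,q}_\mu$ is $\mathrm{id}\otimes\sigma$ on $\tilde\SSS_H$.

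Second, I would observe that the closed map $\mathcal J\colon\tilde\SSS_H\to\SSS_G$ is induced by the $F$-rational homomorphism $J$ of algebraic groups, so that the pulled-back sheaf $\mathcal J^*\E_\mu$ is the local system attached to $E_\mu$ regarded as an $H(F)$-module, compatibly with the $\Q(E_\mu)$-structures fixed above. Hence the induced map on compactly supported cohomology restricts to a $\Q(E_\mu)$-linear map
$$\mathcal J^{q}_{\mu,\circ}\colon H^q_c(\SSS_G,\E_\mu^\circ)\longrightarrow H^q_c(\tilde\SSS_H,\E_\mu^\circ)$$
whose base change along $\Q(E_\mu)\hookrightarrow\C$ recovers $\mathcal J^q_\mu$, and whose base change along $\sigma\colon\Q(E_\mu)\to\C$ recovers $\mathcal J^q_{{}^\sigma\!\mu}$ (applied to the rational structure on ${}^\sigma\!\E_\mu$).

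Finally, the lemma is immediate: the square in the statement is exactly the naturality square asserting that the two horizontal arrows are obtained from the single $\Q(E_\mu)$-linear map $\mathcal J^{q}_{\mu,\circ}$ by applying the two functors $-\otimes_{\Q(E_\mu)}\C$ and $-\otimes_{\Q(E_\mu),\sigma}\C$, while the vertical arrows are the comparison $\mathrm{id}\otimes\sigma$ between them; commutativity holds by the definition of extension of scalars. The only step with any content — and it is slight — is the middle one: verifying that $\mathcal J$ and the comparison isomorphisms are genuinely compatible with the $\Q(E_\mu)$-structures, i.e. that the $\Q(E_\mu)$-structure on $E_\mu$ as an $H(F)$-module used to build $H^q_c(\tilde\SSS_H,\E_\mu)$ is precisely the restriction of the chosen one on $E_\mu$ as a $G(F)$-module, which is how it was fixed in the paragraph preceding the lemma. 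Granting that, no further argument is needed.
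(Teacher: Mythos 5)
Your argument is correct and is precisely the unwinding of definitions that the paper has in mind when it declares this lemma obvious: the comparison isomorphisms $\mathcal H^{\sigma,q}_\mu$ and $\tilde{\mathcal H}^{\sigma,q}_\mu$ are pinned down by the fixed $\Q(E_\mu)$-structures (with the $H(F)$-structure being the restriction of the $G(F)$-structure, as fixed in \S\ref{sect:sigmatwist}), and $\mathcal J^q_\mu$ is defined over $\Q(E_\mu)$, so the square is a naturality square for extension of scalars along $\mathrm{id}$ and $\sigma$. No discrepancy with the paper's approach.
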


\section{Facts and conventions for cuspidal automorphic representations}\label{sect:cusprep}
\subsection{Cohomological cusp forms}\label{sect:cusprep1}
In this paper, we let $\Pi$ be an irreducible unitary cuspidal automorphic representation of $G(\A)$ with trivial central character. Furthermore, we assume that $\Pi$ is self-dual, i.e., $\Pi\cong\Pi^\vee$. This is no loss of generality, as the main result will only hold for such cuspidal representations. (Compare this to Prop.\ \ref{prop:lift} below.) Recall that $\Pi$ has a (unique) Whittaker model (with respect to $\psi$). We write
\[
\whit^{\psi}:\Pi\rightarrow\Whit^{\psi}(\Pi)
\]
for the realization of $\Pi$ in its Whittaker model $\Whit^{\psi}(\Pi)$ by the $\psi$-Fourier coefficient. Analogous notation is used locally, which we trust will not cause any confusion. We will furthermore assume that $\Pi$ is cohomological: By this we understand that there is an irreducible, algebraic representation $E_\mu$ of $G_\infty$, as in Sect.\ \ref{sect:highweights}, such that the archimedean component $\Pi_\infty$ of $\Pi$ has non-vanishing $(\m_G,K^\circ_\infty)$-cohomology with respect to $E_\mu$, i.e.,
$$H^q(\m_G,K^\circ_\infty,\Pi_\infty\otimes E_\mu)\neq 0,$$
for some degree $q$.
\begin{lem}\label{lem:cohom}
Let $\rho_\infty$ be an irreducible unitary $(\g_\infty,K_\infty^\circ)$-module with trivial $A_G$-action. Then the following assertions are equivalent:
\begin{enumerate}
\item $H^*(\m_G,K^\circ_\infty,\rho_\infty\otimes E_\mu)\neq 0,$
\item $H^*(\g_\infty,K^\circ_\infty,\rho_\infty\otimes E_\mu)\neq 0,$
\item $H^*(\g_\infty,(Z_\infty K_\infty)^\circ,\rho_\infty\otimes E_\mu)\neq 0.$
\end{enumerate}
\end{lem}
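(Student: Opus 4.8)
The plan is to verify the three equivalences by relating the various relative Lie algebra cohomology groups through standard reduction arguments, using the assumption that $\rho_\infty$ is unitary with trivial $A_G$-action. The key structural observations are the direct sum decompositions $\g_\infty = \a_G \oplus \m_G$ and $\z_\infty = \a_G \oplus \s$, together with the fact that $(Z_\infty K_\infty)^\circ = A_G \cdot (Z_\infty^1 K_\infty)^\circ$ where $Z_\infty^1 = Z_\infty \cap G(\A)^1_\infty$ is compact (being a product of copies of $\{\pm 1\}$ at each real place, so its identity component is trivial — one should be slightly careful here, but in any case $(Z_\infty K_\infty)^\circ = A_G K_\infty^\circ$).

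First I would prove $(1)\Leftrightarrow(3)$. Since $A_G$ is a connected abelian group acting trivially on $\rho_\infty$ (by hypothesis) and trivially on $E_\mu$ restricted to it up to a central character — here one uses that the central character of $\Pi$ is trivial and $E_\mu$ is self-dual, so $\rho_\infty \otimes E_\mu$ has trivial $A_G$-action after passing to the appropriate quotient — the Künneth-type formula for the split abelian factor $\a_G$ gives
$$H^*(\g_\infty, K_\infty^\circ, \rho_\infty \otimes E_\mu) \cong H^*(\a_G) \otimes H^*(\m_G, K_\infty^\circ, \rho_\infty \otimes E_\mu),$$
and since $H^*(\a_G) = \Lambda^* \a_G^\vee \neq 0$, this already yields $(1)\Leftrightarrow(2)$. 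For $(2)\Leftrightarrow(3)$ one compares the $K_\infty^\circ$-cohomology with the $(Z_\infty K_\infty)^\circ$-cohomology; since $(Z_\infty K_\infty)^\circ = A_G K_\infty^\circ$ and the $A_G$-action is trivial, these relative cohomologies differ only by the $\Lambda^* \a_G^\vee$ factor, giving the equivalence. Alternatively, one invokes the standard reference (e.g. Borel–Wallach, Ch.\ I, or the discussion in \cite{grob-ragh}) where exactly this three-way equivalence is recorded for cohomological representations of $\GL_N$.

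The main obstacle — though it is more bookkeeping than genuine difficulty — is keeping track of exactly which central torus acts trivially and ensuring the coefficient module $\rho_\infty \otimes E_\mu$ genuinely descends to the relevant quotient $\m_G = \g_\infty/\a_G$. This requires that the infinitesimal character of $\rho_\infty$ matches that of $E_\mu^\vee$ on the center, which is automatic from the non-vanishing hypothesis in each of (1)–(3) via Wigner's lemma, so the equivalence can be argued without circularity: one shows that if any one of the three cohomology groups is non-zero, the central characters are forced to be compatible, and then the Künneth argument for $\a_G$ transfers non-vanishing among all three. I would present this compactly, citing Borel–Wallach for the relative Lie algebra cohomology formalism and \cite{grob-ragh} for the specific instance in the $\GL_{2n}$ setting, rather than reproving the Künneth formula for the one-dimensional split factor.
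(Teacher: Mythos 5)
Your overall strategy (K\"unneth for split abelian central factors, plus Wigner's lemma to force the central characters to match under the non-vanishing hypothesis) is exactly the mechanism the paper invokes, and your treatment of $(1)\Leftrightarrow(2)$ via $\g_\infty=\a_G\oplus\m_G$ and $H^*(\a_G,\C)=\Lambda^*\a_G^*\neq 0$ is fine. But the step $(2)\Leftrightarrow(3)$ as you argue it contains a genuine error: it is not true that $(Z_\infty K_\infty)^\circ=A_G K_\infty^\circ$ unless $d=[F:\Q]=1$. Here $Z_\infty\cong(\R^\times)^d$, so $(Z_\infty K_\infty)^\circ=Z_\infty^\circ K_\infty^\circ$ has Lie algebra $\z_\infty\oplus\k_\infty$, which exceeds $\a_G\oplus\k_\infty$ by the $(d-1)$-dimensional split piece $\s=\z_\infty/\a_G$. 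Your claim that $Z_\infty^1=Z_\infty\cap G(\A)^1$ is compact is also false for $d>1$: the norm-one condition is a single equation $\prod_v|z_v|^{2n}=1$, not a place-by-place one, so $Z_\infty^1\cong\{\pm1\}^d\times\R^{d-1}$ is noncompact. This is not a cosmetic point: the paper's later computations (e.g.\ $H^q(\s,\C)\cong\Lambda^q\C^{d-1}$, the top degree $t=dn(n+1)-1$ versus $r=t-d+1$, and the isomorphism $\mathcal L^t$ in the proof of Thm.\ \ref{hyp}) rest precisely on the fact that passing from the pair $(\m_G,K_\infty^\circ)$ (or $(\g_\infty,K_\infty^\circ)$) to $(\g_\infty,(Z_\infty K_\infty)^\circ)$ removes a $(d-1)$-dimensional exterior algebra factor, so the two pairs in (2) and (3) do \emph{not} coincide up to $A_G$.

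The repair is straightforward and stays within your framework: compare (2) and (3) through the full split centre $\z_\infty$ rather than through $\a_G$. Note that triviality of the $\s$-action on $\rho_\infty$ is \emph{not} part of the hypothesis (only the $A_G$-action is assumed trivial), so you must first argue, exactly as you do at the end of your proposal, that non-vanishing of any of the three cohomology groups forces $\z_\infty$ to act trivially on $\rho_\infty\otimes E_\mu$ (Wigner's lemma, using unitarity and the fact that self-duality of $E_\mu$ makes its central action trivial); conversely, for the pair in (3) the $(Z_\infty K_\infty)^\circ$-equivariance of nonzero cochains forces the same triviality. Once the $\z_\infty$-action is trivial, the K\"unneth rule (\cite{bowa}, I.~1.3, together with I.~5.1 and II.~Prop.~3.1) gives that the cohomologies in (1) and (2) differ from that in (3) by tensoring with $\Lambda^*\s^*$, respectively $\Lambda^*\z_\infty^*$, both nonzero, and the three non-vanishing statements are equivalent --- with the degree shift by $d-1$ that your version of the argument would have erased.
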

\begin{proof}
This follows combining the following well-known results on relative Lie algebra cohomology :\cite{bowa}, I. 1.3 (the K\"unneth rule), I. 5.1, I. Thm.\ 5.3 (Wigner's lemma) and II. Prop.\ 3.1 (all cochains are closed and harmonic). We refer to \cite{bowa} for details.
\end{proof}
As a consequence, the archimedean component $\Pi_\infty$ of a cuspidal automorphic representation $\Pi$, as above, is cohomological in our sense, if and only if $\Pi_\infty$ has non-vanishing $(\g_\infty,K^\circ_\infty)$-cohomology or equivalently, non-vanishing $(\g_\infty,(Z_\infty K_\infty)^\circ)$-cohomology with respect to the same algebraic, self-dual coefficient module $E_\mu$ (although the degrees and dimensions of non-trivial cohomology spaces may change).

The fundamental group $\pi_0(G_\infty)\cong K_\infty/K^\circ_\infty$ acts on the cohomology groups $H^q(\m_G,K^\circ_\infty,\Pi_\infty\otimes E_\mu)$ in each degree. For a character $\epsilon\in\pi_0(G_\infty)^*$, which we identify with
$$
\epsilon = (\epsilon_1,\dots,\epsilon_d) \in  (\Z/2\Z)^d \cong \pi_0(G_\infty)^*,
$$
one obtains a corresponding $\pi_0(G_\infty)$-isotypic component $H^q(\m_G,K^\circ_\infty,\Pi_\infty\otimes E_\mu)[\epsilon]$.
Put
\begin{equation}\label{eqn:top-degree}
t:=dn(n+1)-1.
\end{equation}
Then,
$$
\dim_\C H^t(\m_G,K^\circ_\infty,\Pi_\infty\otimes E_\mu)[\epsilon]=1
$$
for all $\epsilon\in \pi_0(G_\infty)^*$. This is a direct consequence of the formula in Clozel \cite{clozel}, Lem.\ 3.14 (see also \cite[3.1.2]{mahnk} or \cite[5.5]{grobrag}), the K\"unneth rule (\cite{bowa}, I. 1.3) and the fact that $\s$ is a $d-1$-dimensional abelian real Lie algebra, whence $H^q(\s,\C)\cong\Lambda^q\C^{d-1}$.

Observe furthermore, that (for all degrees $q$ and characters $\epsilon\in \pi_0(G_\infty)^*$) there is a natural $G(\A_f)$-equivariant inclusion
\begin{equation}\label{eq:Delta}
\Delta^q_\Pi: H^{q}(\m_G,K^\circ_\infty,\Pi\otimes E_\mu)[\epsilon]\hookrightarrow H_c^q(\SSS_G,\E_\mu).
\end{equation}
This is well-known and follows from the fact that the cohomology of cuspidal automorphic forms injects into the cohomology $H^q(\SSS_G,\E_\mu)$, while at the same time, $H_c^q(\SSS_G,\E_\mu)$ is isomorphic to the space of cohomology of fast decreasing differential forms (\cite{borel_stablerealII}, \S 5).

\subsection{Rational structures}\label{sect:rat_struct}
We have the following result:

\begin{prop}\label{prop:rational_structures}
Let $\Pi$ be a cuspidal automorphic representation of $G(\A)$, which is cohomological with respect to $E_\mu$. Then, the $\sigma$-twisted representation ${}^\sigma\Pi$ is also cuspidal automorphic and it is cohomological with respect to ${}^\sigma\! E_\mu$. For every $\epsilon\in \pi_0(G_\infty)^*$, the irreducible unitary $G(\A_f)$-module $H^{t}(\m_G,K^\circ_\infty,\Pi\otimes E_\mu)[\epsilon]$ is defined over the rationality field $\Q(\Pi_f)$. This field is a number field, containing $\Q(E_\mu)$.
\end{prop}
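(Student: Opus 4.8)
The plan is to follow the now-standard strategy of Clozel \cite{clozel} and Waldspurger \cite{waldsp}: transport every rationality question about $\Pi$ into the cuspidal cohomology of $\SSS_G$, where $\mathrm{Aut}(\C)$ acts through the $\sigma$-linear maps $\mathcal H^{\sigma,t}_\mu$ of \eqref{eq:H_mu}, and then feed in Strong Multiplicity One for $\GL_{2n}$. First I would record that ${}^\sigma\!E_\mu$ is again an irreducible algebraic representation of $G_\infty$, whose highest weight is the tuple obtained from $\mu$ by the permutation of $S_\infty$ induced by $\sigma$, and that it is self-contragredient because $\mu$ is. Via the inclusion $\Delta^t_\Pi$ of \eqref{eq:Delta}, the $G(\A_f)$-module $H^t(\m_G,K^\circ_\infty,\Pi\otimes E_\mu)[\epsilon]\cong\Pi_f$ embeds into $H^t_c(\SSS_G,\E_\mu)$, which carries the fixed $\Q(E_\mu)$-rational structure recalled in \S\ref{sect:sigmatwist}. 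Next I would observe that $\mathcal H^{\sigma,t}_\mu$ carries the cuspidal part of $H^t_c(\SSS_G,\E_\mu)$ onto that of $H^t_c(\SSS_G,{}^\sigma\!\E_\mu)$ and, being $\sigma$-linear and $G(\A_f)$-equivariant, sends the $\Pi_f$-isotypic summand onto the ${}^\sigma\Pi_f$-isotypic summand of the latter. Hence ${}^\sigma\Pi_f$ occurs in the cuspidal cohomology of ${}^\sigma\!\E_\mu$, so it is the finite part of a cuspidal automorphic representation of $G(\A)$ whose archimedean component is cohomological with respect to ${}^\sigma\!E_\mu$ (in all the equivalent senses of Lemma \ref{lem:cohom}); by Strong Multiplicity One that representation is unique, and, matching archimedean components (the cohomological archimedean representation being determined by the self-dual coefficient module, cf.\ Wigner's lemma as used in the proof of Lemma \ref{lem:cohom}), it equals ${}^\sigma\Pi={}^\sigma\Pi_\infty\otimes{}^\sigma\Pi_f$. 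This would settle the first assertion; alternatively one may simply quote \cite{grob-ragh}, Prop.\ 3.3, or \cite{grobrag}.

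For the number-field claim I would argue that $\mu$ has only finitely many $\mathrm{Aut}(\C)$-conjugates, since $\mathrm{Aut}(\C)$ acts on it through its finite action on $S_\infty$, and that for each of the finitely many sheaves ${}^\sigma\!\E_\mu$ the cohomology $H^t_c(\SSS_G,{}^\sigma\!\E_\mu)$ is finite-dimensional, whence only finitely many isomorphism classes ${}^\sigma\Pi_f$ can arise; thus $\mathfrak S(\Pi_f)$ has finite index in $\mathrm{Aut}(\C)$ and $\Q(\Pi_f)$ is a number field. For the inclusion $\Q(E_\mu)\subseteq\Q(\Pi_f)$, equivalently $\mathfrak S(\Pi_f)\subseteq\mathfrak S(E_\mu)$, I would take $\sigma\in\mathfrak S(\Pi_f)$ and note that the cuspidal automorphic representations ${}^\sigma\Pi$ and $\Pi$ then agree at every finite place, hence ${}^\sigma\Pi\cong\Pi$ by Strong Multiplicity One; comparing archimedean components, $\Pi_\infty\cong{}^\sigma\Pi_\infty$ is cohomological with respect to both $E_\mu$ and ${}^\sigma\!E_\mu$, and since the highest weight of the coefficient module is pinned down by the infinitesimal character of $\Pi_\infty$, this forces ${}^\sigma\!E_\mu\cong E_\mu$.

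Finally, for the $\Q(\Pi_f)$-rational structure on $H^t(\m_G,K^\circ_\infty,\Pi\otimes E_\mu)[\epsilon]$ I would use that, by the previous step, every $\sigma\in\mathfrak S(\Pi_f)$ lies in $\mathfrak S(E_\mu)$, so $\mathcal H^{\sigma,t}_\mu$ becomes a $\sigma$-linear $G(\A_f)$-equivariant \emph{automorphism} of $H^t_c(\SSS_G,\E_\mu)$ preserving the $\Q(E_\mu)$-lattice, the cuspidal part, and — because ${}^\sigma\Pi_f\cong\Pi_f$ — the $\Pi_f$-isotypic summand $V:=\bigoplus_\epsilon\Delta^t_\Pi\big(H^t(\m_G,K^\circ_\infty,\Pi\otimes E_\mu)[\epsilon]\big)$. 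Intersecting the $\Q(E_\mu)$-lattice with $V$ and passing to $\mathfrak S(\Pi_f)$-invariants would then give a $\Q(\Pi_f)$-structure on $V$, and I would conclude by refining this along the grading of $V$ by $\pi_0(G_\infty)^*$, which is legitimate because the characters $\epsilon$ are $\{\pm1\}$-valued, because $\pi_0(G_\infty)$ acts $\Q$-rationally on $\SSS_G$, and because the inclusion $\Q(E_\mu)\subseteq\Q(\Pi_f)$ just proved ensures that the permutation of $\pi_0(G_\infty)^*$ induced by any $\sigma\in\mathfrak S(\Pi_f)$ is rational over $\Q(\Pi_f)$.

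The hard part will be exactly this last refinement by the signs $\epsilon$: one has to be sure that descending from the $\Q(\Pi_f)$-rational module $V$ to its graded pieces $\Delta^t_\Pi\big(H^t(\m_G,K^\circ_\infty,\Pi\otimes E_\mu)[\epsilon]\big)$ does not force a genuine finite extension of $\Q(\Pi_f)$ — this is where the $\Q$-rationality of the $\pi_0(G_\infty)$-action and the containment $\Q(E_\mu)\subseteq\Q(\Pi_f)$ are indispensable. Everything else is a formal consequence of Clozel's rationality of cuspidal cohomology together with Strong Multiplicity One.
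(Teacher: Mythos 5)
Your outline follows the same route as the paper, which simply quotes Clozel for every substantive point; but read as a self-contained argument it has gaps exactly where the citations would have to go. The step you call an observation --- that $\mathcal H^{\sigma,t}_\mu$ carries the cuspidal part of $H^t_c(\SSS_G,\E_\mu)$ onto the cuspidal part of $H^t_c(\SSS_G,{}^\sigma\!\E_\mu)$ --- is precisely the non-trivial content of Clozel's Th\'eor\`eme 3.13. It requires (i) that interior cohomology is defined over $\Q(E_\mu)$ and hence ${\rm Aut}(\C)$-stable, and (ii) an argument that ${}^\sigma\Pi_f$ cannot be the finite part of a \emph{residual} (Speh) representation: residual representations of $\GL_{2n}$ can perfectly well be cohomological with respect to self-dual algebraic coefficients (already for $\GL_4$ with trivial coefficients), so this is not formal and is the delicate part of Clozel's proof. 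Likewise, matching the archimedean component is not a consequence of Wigner's lemma alone, which only fixes the infinitesimal character; one needs that local components of cuspidal representations of $\GL_{2n}$ are generic together with the uniqueness of the generic cohomological unitary representation of $G_\infty$ with that infinitesimal character --- this is what the paper's reference to \cite{grobrag}, 5.5, supplies.

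Two further steps are wrong or insufficient as written. First, $H^t_c(\SSS_G,{}^\sigma\!\E_\mu)$ is \emph{not} finite-dimensional: it is the colimit over all levels and its cuspidal part contains infinitely many non-isomorphic $\Pi'_f$. To get finiteness of the ${\rm Aut}(\C)$-orbit you must fix a compact open $K_f$ with $\Pi_f^{K_f}\neq 0$, note that the twist ${}^\sigma\Pi_f$ has $K_f$-invariants of the same dimension, and work inside the finite-dimensional $\Q(E_\mu)$-rational space $H^t_c(\tilde{\mathcal S}^{K_f},\cdot)$ for the finitely many sheaves ${}^\sigma\!\E_\mu$; only then does the finite-index (hence number-field) argument for $\Q(\Pi_f)$ go through. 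Second, ``intersecting the $\Q(E_\mu)$-lattice with $V$'' does not produce a rational structure on $V$: a complex subspace meets a rational structure of the ambient space in too small a set unless $V$ is already defined over that field, which is exactly what has to be proved. The correct mechanism is the semilinear descent of Clozel, Prop.\ 3.1 (this is what the paper cites): $V$ is stable under the $\sigma$-linear maps for $\sigma\in{\rm Aut}(\C/\Q(\Pi_f))$, each vector lies in a finite-dimensional $\Q(E_\mu)$-rational Hecke-stable subspace at finite level, and the multiplicity-one statement $\dim H^t(\m_G,K^\circ_\infty,\Pi_\infty\otimes E_\mu)[\epsilon]=1$ is what removes any Schur-index obstruction to descending to $\Q(\Pi_f)$ rather than a finite extension. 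Finally, the point you single out as the hard one --- refining by $\epsilon$ --- is in fact harmless: $\pi_0(G_\infty)$ acts on $H^t_c(\SSS_G,\E_\mu)$ through geometric automorphisms compatible with the $\Q(E_\mu)$-structure and commuting with $G(\A_f)$, and its characters are $\pm 1$-valued, so the eigenspace decomposition is already rational; the genuine difficulties are the ones listed above, for which the paper's proof is a direct appeal to \cite{clozel}, Thm.\ 3.13 and Prop.\ 3.1, together with \cite{grobrag}, Thm.\ 8.1 and Cor.\ 8.7.
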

\begin{proof}
This is essentially due to Clozel, \cite{clozel}. In order to derive the above result from \cite{clozel}, observe that $\Pi_\infty$ is ``regular algebraic'' in Clozel's sense, if and only if it is cohomological in our sense: This follows using Lem.\ \ref{lem:cohom} and \cite{grobrag} Thm.\ 6.3. Hence, ${}^\sigma\Pi_f$ is the non-archimedean part of a cuspidal automorphic representation, which is cohomological with respect to ${}^\sigma\! E_\mu$ by \cite{clozel} Thm.\ 3.13. By uniqueness, see e.g. \cite{grobrag} 5.5, the archimedean part of this cuspidal automorphic representation is isomorphic to ${}^\sigma\Pi_\infty$ as defined above. By \cite{clozel}, Prop. 3.1, $H^{t}(\m_G,K^\circ_\infty,\Pi\otimes E_\mu)[\epsilon]$ is defined over $\Q(\Pi_f)$ (See also \cite{grobrag}, Cor.\ 8.7.). Finally, it is an implicit consequence of \cite{clozel}, Thm.\ 3.13 and its proof that $\Q(\Pi_f)$ is a number field containing $\Q(E_\mu)$. For a detailed exposition of the latter assertion, we refer to \cite{grobrag}, Thm.\ 8.
1 and the proof of \cite{grobrag}, Cor.\ 8.7.
\end{proof}

\begin{rem}\label{rem:rat_structure}
The $\Q(\Pi_f)$-structure on $H^{t}(\m_G,K^\circ_\infty,\Pi\otimes E_\mu)[\epsilon]$ is unique up to homotheties, i.e., up to multiplication by non-zero complex numbers, cf.\ \cite{clozel} Prop.\ 3.1. As $\Q(E_\mu)\subseteq\Q(\Pi_f)$, we may fix the $\Q(\Pi_f)$-structure on $H^{t}(\m_G,K^\circ_\infty,\Pi\otimes E_\mu)[\epsilon]$ which is induced by $\Delta^t_\Pi$, cf.\ \eqref{eq:Delta}, and our choice of a $\Q(E_\mu)$-structure on $H_c^t(\SSS_G,\E_\mu)$, cf\ \S\ref{sect:sigmatwist}.
\end{rem}

\subsection{Lifts from $\SO(2n+1)$}\label{sect:liftSO}
We resume the assumptions made on $\Pi$ from Sect.\ \ref{sect:cusprep1}. As a last part of notation for $\Pi$, let us introduce $S=S(\Pi,\psi)$, which is a (sufficiently large) finite set of places of $F$, containing $S_\infty$ and such that outside $S$, both $\Pi$ and $\psi$ are unramified.

\begin{prop}\label{prop:lift}
Let $\Pi$ be a cuspidal automorphic representation of $G(\A)$ as in Sect.\ \ref{sect:cusprep1} above. Then the following assertions are equivalent:
\begin{enumerate}
\item The partial exterior square $L$-function, $L^S(s,\Pi,\Lambda^2)$, has a pole at $s=1$,
\item $\Pi$ is the lift of an irreducible unitary generic cuspidal automorphic representation of the split special orthogonal group $\SO(2n+1)$ in the sense of \cite{ckpssh}, \S 1.
\end{enumerate}
\end{prop}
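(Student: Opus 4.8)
**The plan is to establish Proposition \ref{prop:lift} by combining the converse-theorem construction of the backwards lift with the known analytic behaviour of the exterior square $L$-function.** Both implications rely on the work of Cogdell--Kim--Piatetski-Shapiro--Shahidi \cite{ckpssh} on functoriality from $\SO(2n+1)$ to $\GL_{2n}$, together with the pole characterization of the exterior square $L$-function due to Jacquet--Shalika \cite{jac_shal_AnnArbor}.

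For the implication $(2)\Rightarrow(1)$: if $\Pi$ is the functorial lift from $\SO(2n+1)$ of a generic cuspidal $\sigma$, then the local components of $\Pi$ match the local (generic) Langlands parameters coming from $\SO(2n+1)$, whose dual group is $\mathrm{Sp}_{2n}(\C)$. Since the standard $2n$-dimensional representation of $\mathrm{Sp}_{2n}(\C)$ preserves a symplectic form, it factors through $\mathrm{Sp}_{2n}(\C)$, and consequently the symplectic form is an invariant vector inside $\Lambda^2$ of the standard representation. This forces the partial exterior square $L$-function $L^S(s,\Pi,\Lambda^2)$ to contain the factor $L^S(s,\triv)$ up to a holomorphic, non-vanishing function — concretely, one compares Satake parameters place by place outside $S$ — and since $\zeta_F^S(s)=L^S(s,\triv)$ has a simple pole at $s=1$, so does $L^S(s,\Pi,\Lambda^2)$. (One also needs the complementary fact that the remaining part is holomorphic and non-zero at $s=1$, which again follows from the explicit shape of the Satake parameters of a symplectic-type representation.)

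For the implication $(1)\Rightarrow(2)$: this is the deeper direction and I expect it to be the main obstacle. The idea is to invoke the descent/lift dichotomy established in \cite{ckpssh}: a self-dual cuspidal automorphic representation $\Pi$ of $\GL_{2n}(\A)$ with trivial central character is a functorial lift from exactly one of the split classical groups $\SO(2n+1)$ or $\SO(2n)$ / $\mathrm{Sp}_{2n}$, and the choice is detected precisely by which of $L^S(s,\Pi,\Lambda^2)$ or $L^S(s,\Pi,\mathrm{Sym}^2)$ has a pole at $s=1$ (Jacquet--Shalika tells us one of the two factors of $L^S(s,\Pi\times\Pi)$ must carry the pole at $s=1$, since the Rankin--Selberg $L$-function of a self-dual $\Pi$ with itself has a pole there). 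A pole of $L^S(s,\Pi,\Lambda^2)$ at $s=1$ is exactly the criterion that $\Pi$ lies in the image of the backwards lift from $\SO(2n+1)$; this is the content of the main theorems of \cite{ckpssh}, combined with the local and global genericity of the descent. I would cite this directly rather than reproving it. The one point requiring a small argument is verifying that our cohomological, unitary $\Pi$ indeed falls under the hypotheses of \cite{ckpssh} (self-duality and trivial central character are already assumed in \S\ref{sect:cusprep1}), and noting that a pole of a partial $L$-function is independent of the finite set $S$ in the relevant sense because the missing Euler factors at $v\in S$ are non-zero and holomorphic at $s=1$.

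**In short**, the proof is essentially a citation of \cite{ckpssh} packaged with the Jacquet--Shalika pole criterion; the substantive content is the equivalence "pole of exterior square $\Leftrightarrow$ image of $\SO(2n+1)$-lift", which is their theorem, and the only work on our side is to check the standing hypotheses on $\Pi$ are met and to handle the bookkeeping of the finite set $S$. The genuine mathematical difficulty — the construction of the descent map and the proof that it inverts the lift — is imported wholesale.
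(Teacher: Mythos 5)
Your proposal is correct and matches the paper's approach: the paper's entire proof is the single citation ``With our assumptions on $\Pi$ this is \cite{ckpssh}, Thm.\ 7.1,'' which is precisely the equivalence (pole of $L^S(s,\Pi,\Lambda^2)$ at $s=1$ $\Leftrightarrow$ lift from split $\SO(2n+1)$) that you import wholesale. Your extra sketch of $(2)\Rightarrow(1)$ via symplectic Satake parameters and of the $S$-independence bookkeeping is harmless but unnecessary, since the cited theorem already gives both directions under the standing hypotheses of \S\ref{sect:cusprep1}.
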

\begin{proof}
With our assumptions on $\Pi$ this is \cite{ckpssh}, Thm.\ 7.1.
\end{proof}
This result is recalled for convenience, as it provides an alternative description of what it means that the exterior square $L$-function of $\Pi$ has a pole at $s=1$. We will have to make this assumption later, in order to obtain our main theorems. See, e.g., Thm.\ \ref{thm:integralrep_extsqr},  Thm.\ \ref{thm:residue} , and Thm.\ \ref{thm:Symm2}. It is not referred to until \S\ref{sect:BG}.
In any case, the above result is accompanied by

\begin{prop}\label{prop:sigma_poles}
Let $\Pi$ be a cuspidal automorphic representation of $G(\A)$ as in \S\ref{sect:cusprep1}. Assume that $\Pi$ satisfies the equivalent conditions of Prop.\ \ref{prop:lift}, i.e., the partial exterior square $L$-function, $L^S(s,\Pi,\Lambda^2)$, has a pole at $s=1$. Then, for all $\sigma\in {\rm Aut}(\C)$, also ${}^\sigma\Pi$ is a cuspidal automorphic representation of $G(\A)$ as in \S\ref{sect:cusprep1}, which satisfies the equivalent conditions of Prop.\ \ref{prop:lift}, i.e., the partial exterior square $L$-function, $L^S(s,{}^\sigma\Pi,\Lambda^2)$, has a pole at $s=1$.
\end{prop}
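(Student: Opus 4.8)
The plan is to deduce Proposition \ref{prop:sigma_poles} from the $\sigma$-equivariance already recorded in the excerpt together with the Cogdell--Kim--Piatetski-Shapiro--Shahidi characterization of Prop.\ \ref{prop:lift}. First I would observe that the hypotheses on $\Pi$ listed in \S\ref{sect:cusprep1} --- irreducible, unitary, cuspidal automorphic, trivial central character, self-dual, cohomological with respect to $E_\mu$ --- are all preserved under the $\sigma$-twist: cuspidality and cohomology (with respect to ${}^\sigma\!E_\mu$) are exactly Prop.\ \ref{prop:rational_structures}; the triviality of the central character of ${}^\sigma\Pi$ follows because the central character of ${}^\sigma\Pi$ is the $\sigma$-twist of the central character of $\Pi$ (a Hecke character), and the $\sigma$-twist of the trivial character is trivial; and self-duality $ {}^\sigma\Pi\cong({}^\sigma\Pi)^\vee$ follows from $\Pi\cong\Pi^\vee$ since the formation of the contragredient commutes with the $\sigma$-twist. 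So ${}^\sigma\Pi$ is again a representation of the type considered in \S\ref{sect:cusprep1}.

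Next I would translate the pole condition into the functorial-lift condition via Prop.\ \ref{prop:lift}, and show that being a lift from $\SO(2n+1)$ in the sense of \cite{ckpssh} is itself stable under $\sigma$-twisting. Concretely, if $\Pi$ is the Langlands functorial lift of an irreducible unitary generic cuspidal automorphic representation $\tau$ of $\SO(2n+1)(\A)$, then by the characterization in \cite{ckpssh} the lift is pinned down by the near-equivalence class, i.e.\ by the local components $\Pi_v$ at almost all places $v$, which are the local functorial lifts of $\tau_v$. The statement that ${}^\sigma\Pi$ is again such a lift then amounts to the compatibility of the local functorial transfer (which at an unramified place is an explicit operation on Satake parameters) with the $\sigma$-twist of representations of $p$-adic groups. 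This compatibility is exactly of the kind already used implicitly in \cite{clozel} and in \cite{grobrag}, and in the present setup one can also argue purely on the $\GL_{2n}$-side: the partial exterior square $L$-function is an Euler product of explicit rational functions in $q_v^{-s}$ whose coefficients are symmetric functions of the Satake parameters of $\Pi_v$, and the Satake parameters of $({}^\sigma\Pi)_v$ are obtained from those of $\Pi_v$ by applying $\sigma$ (after the standard identification); hence $L^S(s,{}^\sigma\Pi,\Lambda^2)$ is obtained from $L^S(s,\Pi,\Lambda^2)$ by applying $\sigma$ to the coefficients of the local factors. A meromorphic Dirichlet series with a pole at $s=1$ retains a pole at $s=1$ under such a $\sigma$-twist of coefficients (one can see this, for instance, by noting that $L^S(s,\Pi,\Lambda^2)$ appears as a factor of $L^S(s,\Pi\times\Pi)$, whose pole order at $s=1$ is governed by $\dim\Hom(\Pi^\vee,\Pi)=1$, an invariant unchanged under $\sigma$). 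Either route gives that $L^S(s,{}^\sigma\Pi,\Lambda^2)$ has a pole at $s=1$, and then Prop.\ \ref{prop:lift} applied to ${}^\sigma\Pi$ finishes the proof.

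The main obstacle is making the compatibility ``$\sigma$-twist commutes with the functorial lift / with the formation of partial $L$-functions'' genuinely rigorous rather than folklore. The cleanest way around it, and the one I would ultimately adopt, is to avoid the $\SO(2n+1)$-side altogether in the proof: work only with $L^S(s,\Pi,\Lambda^2)$, use that at $v\notin S$ the local Langlands parameter of $({}^\sigma\Pi)_v$ is $\sigma$ applied (via the Satake isomorphism, which is defined over $\Q$ once a square root of $q_v$ is handled or avoided by a suitable twist) to that of $\Pi_v$, and invoke the known holomorphy/pole dichotomy for $L^S(s,\Pi\times\Pi)$ from Jacquet--Shalika together with the factorization $L^S(s,\Pi\times\Pi)=L^S(s,\Pi,\mathrm{Sym}^2)L^S(s,\Pi,\Lambda^2)$ recalled in the introduction, so that the pole at $s=1$ is detected by an $\mathrm{Aut}(\C)$-stable property. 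Then Prop.\ \ref{prop:lift} is used only at the very end to re-package the conclusion. I would also remark that, since the statement of Prop.\ \ref{prop:sigma_poles} is symmetric under replacing $\sigma$ by $\sigma^{-1}$ (and $\sigma\sigma^{-1}=\mathrm{id}$), it suffices to prove one implication, which slightly streamlines the bookkeeping.
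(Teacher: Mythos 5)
Your first paragraph is fine and matches the paper: the preservation of the \S\ref{sect:cusprep1}-conditions under $\sigma$-twisting is exactly what Prop.\ \ref{prop:rational_structures} supplies (together with the elementary remarks on central character and self-duality). The gap is in the second step. Your ``ultimately adopted'' route only proves that $L^S(s,{}^\sigma\Pi\times{}^\sigma\Pi)$ has a pole at $s=1$: indeed, that follows from ${}^\sigma\Pi\cong({}^\sigma\Pi)^\vee$ and Jacquet--Shalika, and is an ${\rm Aut}(\C)$-stable statement. But the factorization $L^S(s,{}^\sigma\Pi\times{}^\sigma\Pi)=L^S(s,{}^\sigma\Pi,{\rm Sym}^2)\cdot L^S(s,{}^\sigma\Pi,\Lambda^2)$ does not tell you \emph{which} factor carries the (simple) pole; deciding whether it sits in $\Lambda^2$ or in ${\rm Sym}^2$ is precisely the symplectic-versus-orthogonal dichotomy for the self-dual representation ${}^\sigma\Pi$, i.e.\ it is the content of the proposition itself, so the argument as written is circular at the decisive point. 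The auxiliary claim that ``a Dirichlet series with a pole at $s=1$ retains a pole after applying $\sigma$ to its coefficients'' is not legitimate either: $\sigma$ is not continuous, coefficient-wise twisting preserves neither meromorphic continuation nor the location of poles in any formal way, and the Satake-parameter bookkeeping (including the half-integral normalization issues you allude to) does not repair this. Your first route, via compatibility of the $\SO(2n+1)$-transfer with $\sigma$, would require knowing that the twist of the generic cuspidal representation $\tau$ of $\SO(2n+1)(\A)$ is again automorphic and that the transfer commutes with twisting, which is not available here, as you yourself concede.

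What is missing is an ${\rm Aut}(\C)$-stable characterization that isolates the exterior-square pole, and this is how the paper argues: by Jacquet--Shalika (\cite{jac_shal_AnnArbor}, Thm.\ 1, p.\ 213) the pole of $L^S(s,\Pi,\Lambda^2)$ at $s=1$ is equivalent to the non-vanishing of the Shalika period, i.e.\ to $\Pi$ admitting a (global) Shalika model, and by \cite{grob-ragh}, Thm.\ 3.6.2 this Shalika-model property is preserved under $\Pi\mapsto{}^\sigma\Pi$ (note also that the set $S=S(\Pi,\psi)$ is unchanged under $\sigma$). Running Jacquet--Shalika backwards for ${}^\sigma\Pi$ then yields the pole of $L^S(s,{}^\sigma\Pi,\Lambda^2)$ at $s=1$. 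If you want to salvage your write-up, replace the ``apply $\sigma$ to the coefficients'' paragraph by this period/model argument (or by some other genuinely $\sigma$-stable criterion distinguishing the $\Lambda^2$-pole from the ${\rm Sym}^2$-pole); the remaining framing, including the reduction to one implication via $\sigma^{-1}$, can stay.
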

\begin{proof}
The first assertion, that ${}^\sigma\Pi$ is again a cuspidal automorphic representation of $G(\A)$ as in \S\ref{sect:cusprep1}, has already been proved in Prop.\ \ref{prop:rational_structures}. For the second assertion, observe that the set $S$ does not change under the action of Aut$(\C)$. Now combine \cite{grob-ragh}, Thm.\ 3.6.2 and \cite{jac_shal_AnnArbor}, Thm.\ 1, p.\ 213.
\end{proof}

\section{Top-degree periods}\label{sect:periods}

\subsection{The map $W^\sigma$}\label{sect:Whitt_rational_str}
Recall the Whittaker model $\Whit^{\psi_f}(\Pi_f)=\otimes'_{v\notin S_\infty}\Whit^{\psi_v}(\Pi_v)$ of $\Pi_f=\otimes'_{v\notin S_\infty}\Pi_v$. Given a Whittaker function $\xi_v\in \Whit^{\psi_v}(\Pi_v)$ on $G(F_v)$, $v\notin S_\infty$, and $\sigma\in$ Aut$(\C)$, we may define a Whittaker function ${}^{\sigma}\xi_v\in \Whit^{\psi_v}({}^\sigma\Pi_v)$ by
\begin{equation}
\label{eqn:aut-c-action}
{}^{\sigma}\xi_v(g_v): = \sigma(\xi_v(\t_{\sigma,v}\cdot g_v)),
\end{equation}
where $\t_{\sigma,v}$ is the (uniquely determined) diagonal matrix in $G(\O_v)$, having $1$ as its last entry, which conjugates $\psi_v$ to $\sigma\circ\psi_v$. (Observe that $\t_{\sigma,v}$ does not depend on $\psi_v$).
See \cite{mahnk}, 3.3 and \cite{raghuram-shahidi-imrn}, 3.2. This provides us a $\sigma$-linear intertwining operator
$$W^\sigma:\Whit^{\psi_v}(\Pi_v)\rightarrow\Whit^{\psi_v}({}^\sigma\Pi_v)$$
$$\xi_v\mapsto{}^\sigma\xi_v,$$
for all $\sigma\in$ Aut$(\C)$. In particular, we get a $\Q(\Pi_v)$ structure on $\Whit^{\psi_v}(\Pi_v)$ by taking the subspace of Aut$(\C/\Q(\Pi_v))$-invariant vectors. By the same procedure, we obtain a $\Q(\Pi_f)$ structure on $\Whit^{\psi_f}(\Pi_f)$. (Cf.\ \cite{hardermodsym}, p. 80, \cite{mahnk}, 3.3 or \cite{raghuram-shahidi-imrn}, Lem.\ 3.2.)

\subsection{The map $F^t_\Pi$}\label{sect:generator}
Let $\epsilon_0:=((-1)^{n-1},...,(-1)^{n-1})\in\pi_0(G_\infty)^*$. This choice of a character of the fundamental group is forced upon us by the proof of Thm.\ \ref{hyp} and so we restrict our attention from now on to it. Recall the $\Q(\Pi_f)$-rational structure on $H^{t}(\m_G,K^\circ_\infty,\Pi\otimes E_\mu)[\epsilon_0]$, chosen in Rem.\ \ref{rem:rat_structure} above. This choice of a rational structure also fixes a one-dimensional $\Q(\Pi_f)$-rational structure (as a vector space, or, equivalently, as the trivial $G_\infty$-module) inside the one-dimensional $\C$-vector space $H^{t}(\m_G,K^\circ_\infty,\Pi_\infty\otimes E_\mu)[\epsilon_0]$. In other words, it fixes a generator $[\Pi_\infty]^t$ of $H^{t}(\m_G,K^\circ_\infty,\Pi_\infty\otimes E_\mu)[\epsilon_0]$ up to multiplication by $\Q(\Pi_f)^{\times}$. Any such generator $[\Pi_\infty]^t$ is the pull-back of a generator $[\Whit^{\psi_\infty}(\Pi_\infty)]^t$ of $H^{t}(\m_G,K^\circ_\infty,\Whit^{\psi_\infty}(\Pi_\infty)\otimes E_\mu)[\epsilon_0]$ along $(\whit^{\psi_\infty})^{-1}$. According to \cite{bowa}, II. Prop.\ 3.1, a generator of the latter cohomology space may be written in the form
$$
[\Whit^{\psi_\infty}(\Pi_\infty)]^t=\sum_{\underline i=(i_1,...,i_{t})}\sum_{\alpha=1}^{\dim E_\mu} X^*_{\underline i}\otimes\xi^{\epsilon_0}_{\infty,\underline i, \alpha}\otimes e_\alpha,
$$
where
\begin{enumerate}
\item $X^*_{\underline i}=X^*_{i_1}\wedge...\wedge X^*_{i_{t}}\in \Lambda^{t} \left(\m_G/\k_\infty\right)^*$, with respect to a given basis $\{X_j\}$ of $\m_G/\k_\infty$, which fixes the dual-basis $\{X_j^*\}$ of $\left(\m_G/\k_\infty\right)^*$.
\item $e_\alpha=\otimes_{v\in S_\infty} e_{\alpha,v}\in E_\mu=\otimes_{v\in S_\infty}E_{\mu_v}$, such that $\{e_{\alpha,v}\}$ defines a basis of $E_{\mu_v}$ for all $v\in S_\infty$.
\item $\xi^{\epsilon_0}_{\infty,\underline i, \alpha}=\otimes_{v\in S_\infty} \xi^{\epsilon_0}_{v,\underline i, \alpha}\in \Whit^{\psi_\infty}(\Pi_\infty)=\otimes_{v\in S_\infty} \Whit^{\psi_v}(\Pi_v)$, for each $\underline i$ and $\alpha$.
\end{enumerate}
(We may and will assume that $\{X_j\}$ is the extension of a given ordered basis $\{Y_j\}$ of $\m_H/\k_{H,\infty}$ along our embedding $J:H\hra G$. This assumption, however, will only be important later on. See, e.g., \S\ref{sect:deRham} and \S\ref{sect:archimedean}.)
As a consequence of the above discussion, we may write the generator $[\Pi_\infty]^t$ of $H^{t}(\m_G,K^\circ_\infty,\Pi_\infty\otimes E_\mu)[\epsilon_0]$, which is defined (up to $\Q(\Pi_f)^{\times}$) by our choice of a $\Q(\Pi_f)$-rational structure on $H^{t}(\m_G,K^\circ_\infty,\Pi\otimes E_\mu)[\epsilon_0]$, in the form

$$
[\Pi_\infty]^t=\sum_{\underline i=(i_1,...,i_{t})}\sum_{\alpha=1}^{\dim E_\mu} X^*_{\underline i}\otimes (\whit^{\psi_\infty})^{-1}(\xi^{\epsilon_0}_{\infty,\underline i, \alpha})\otimes e_\alpha.
$$

Recall that $\sigma\in {\rm Aut}(\C)$ acts on objects at infinity, which are parameterized by $S_{\infty}$, by permuting the archimedean places. Having given a generator $[\Whit^{\psi_\infty}(\Pi_\infty)]^t$ of the one-dimensional space $H^{t}(\m_G,K^\circ_\infty,\Whit^{\psi_\infty}(\Pi_\infty)\otimes E_\mu)[\epsilon_0]$ hence provides us with a natural choice of a generator $[\Whit^{\psi_\infty}({}^\sigma\Pi_\infty)]^t$ of $H^{t}(\m_G,K^\circ_\infty,\Whit^{\psi_\infty}({}^\sigma\Pi_\infty)\otimes {}^\sigma\!E_\mu)[\epsilon_0]$:
$$[\Whit^{\psi_\infty}({}^\sigma\Pi_\infty)]^t=\sum_{\underline i=(i_1,...,i_{t})}\sum_{\alpha=1}^{\dim E_\mu} X^*_{\underline i}\otimes{}^\sigma\xi^{\epsilon_0}_{\infty,\underline i, \alpha}\otimes {}^\sigma\! e_\alpha,$$
where ${}^\sigma\xi^{\epsilon_0}_{\infty,\underline i, \alpha}=\otimes_{v\in S_\infty} \xi^{\epsilon_0}_{\sigma^{-1} v,\underline i, \alpha}$ (observe that $\epsilon_0$ does not change, when its local components are permuted) and ${}^\sigma\! e_\alpha=\otimes_{v\in S_\infty} e_{\alpha,\sigma^{-1} v}$. In particular, we obtain a generator
$$[{}^\sigma\Pi_\infty]^t=\sum_{\underline i=(i_1,...,i_{t})}\sum_{\alpha=1}^{\dim E_\mu} X^*_{\underline i}\otimes (\whit^{\psi_\infty})^{-1}({}^\sigma\xi^{\epsilon_0}_{\infty,\underline i, \alpha})\otimes {}^\sigma\! e_\alpha$$
of $H^{t}(\m_G,K^\circ_\infty,{}^\sigma\Pi_\infty\otimes {}^\sigma\!E_\mu)[\epsilon_0]$. Observe that this entails the description of an isomorphism of $G(\A_f)$-modules
$$F^t_\Pi:\Whit^{\psi_f}(\Pi_f)\ira H^{t}(\m_G,K_{\infty}^\circ, \Pi\otimes \!E_{\mu})[\epsilon_0]$$.
$$\xi_f\mapsto F^t_\Pi(\xi_f):=\sum_{\underline i=(i_1,...,i_{t})}\sum_{\alpha=1}^{\dim E_\mu} X^*_{\underline i}\otimes \varphi_{\underline i, \alpha}\otimes e_\alpha,$$
where $\varphi_{\underline i, \alpha}=(\whit^\psi)^{-1}(\xi^{\epsilon_0}_{\infty,\underline i, \alpha}\otimes\xi_f)$. Of course, in light of Prop.\ \ref{prop:rational_structures}, the same assertion holds for ${}^\sigma\Pi$. (As a selection of further reference for this section, see also \cite{mahnk}, 3.3 \& 5.1.4, \cite{raghuram-imrn} 3.2.5, and \cite{grob-ragh}, 4.1.)

\subsection{The map $H^{\sigma,t}_\mu$}\label{sect:H^sigma}
As a last ingredient in this section, we define a $\sigma$-linear, $G(\A_f)$-equivariant isomorphism
$$H^{\sigma,t}_\mu:H^{t}(\m_G,K_{\infty}^\circ, \Pi\otimes E_{\mu})[\epsilon_0]\ira H^{t}(\m_G,K_{\infty}^\circ, {}^\sigma\Pi\otimes{}^\sigma\! E_\mu)[\epsilon_0].$$
To that end, recall the embedding $\Delta^q_{\Pi}$ from \eqref{eq:Delta} and the $\sigma$-linear isomorphism $\mathcal H^{\sigma,q}_\mu$ from \eqref{eq:H_mu}. Observe that Im$(\mathcal H^{\sigma,t}_\mu\circ\Delta^t_\Pi)=$Im$(\Delta^t_{{}^\sigma\Pi})$. Indeed,
by Multiplicity One and Strong Multiplicity One for the discrete automorphic spectrum of $G(\A)$, the ${}^\sigma\Pi_f$-isotypic component
of the $G(\A_f)$-module $H^t_c(\SSS_G,{}^\sigma\!\E_\mu)$ is precisely the image of $H^t(\m_G,K^\circ_\infty,{}^\sigma\Pi\otimes {}^\sigma\!E_\mu)$. As the natural
action of $\pi_0(G_\infty)$ and of $G(\A_f)$ on $H^t_c(\SSS_G,\E_\mu)$ commute, this shows that Im$(\mathcal H^{\sigma,t}_\mu\circ\Delta^t_\Pi)=$Im$(\Delta^t_{{}^\sigma\Pi})$ as claimed. Since $\Delta^t_{{}^\sigma\Pi}$ is injective, the map 
$$H^{\sigma,t}_\mu:=(\Delta^t_{{}^\sigma\Pi})^{-1}\circ\mathcal H^{\sigma,t}_\mu\circ\Delta^t_\Pi$$
is hence a well-defined $\sigma$-linear, $G(\A_f)$-equivariant isomorphism mapping $H^{t}(\m_G,K_{\infty}^\circ, \Pi\otimes E_{\mu})[\epsilon_0]$ onto $H^{t}(\m_G,K_{\infty}^\circ, {}^\sigma\Pi\otimes{}^\sigma\! E_\mu)[\epsilon_0]$ as desired. (Shortly speaking, this amounts to say that the
restriction $H^{\sigma,t}_\mu$ of $\mathcal H^{\sigma,t}_\mu$ to the submodule $H^{t}(\m_G,K_{\infty}^\circ, \Pi\otimes E_{\mu})[\epsilon_0]$ of
$H^t_c(\SSS_G,\E_\mu)$ has image $H^{t}(\m_G,K_{\infty}^\circ, {}^\sigma\Pi\otimes{}^\sigma\! E_\mu)[\epsilon_0]$.)

\subsection{Top-degree Whittaker Periods}\label{sect:periods_sub}
Recall the maps $W^\sigma$ (\S\ref{sect:Whitt_rational_str}), $ F^t_\Pi$ (\S\ref{sect:generator}) and $H^{\sigma,t}_\mu$ (\S\ref{sect:H^sigma}). There is the following result:
\begin{prop}\label{prop:periods}
For every $\sigma\in {\emph{Aut}}(\C)$, there is a non-zero complex number $p^t({}^\sigma\Pi)$ (a ``period''), uniquely determined up to multiplication by elements in $\Q({}^\sigma\Pi_f)^\times$, such that the normalized maps $\mathcal F^t_{{}^\sigma\Pi}:=p^t({}^\sigma\Pi)^{-1} F^t_{{}^\sigma\Pi}$ make the following diagram commutative:
$$
\xymatrix{
\Whit^{\psi_f}(\Pi_f) \ar[rrrr]^{\mathcal F^t_\Pi}\ar[d]_{W^\sigma} & & & &
H^{t}(\m_G,K_{\infty}^\circ, \Pi\otimes E_{\mu})[\epsilon_0]
\ar[d]^{H^{\sigma,t}_\mu} \\
\Whit^{\psi_f}({}^\sigma\Pi_f)
\ar[rrrr]^{\mathcal F^t_{{}^\sigma\Pi}}
& & & &
H^{t}(\m_G,K_{\infty}^\circ, {}^\sigma\Pi\otimes {}^\sigma\! E_\mu)[\epsilon_0]
}
$$
\end{prop}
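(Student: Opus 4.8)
The plan is to deduce the existence of the period $p^t({}^\sigma\Pi)$ from the fact that both horizontal arrows $F^t_\Pi$ and $F^t_{{}^\sigma\Pi}$ are \emph{isomorphisms} of irreducible $G(\A_f)$-modules, and that the two vertical arrows $W^\sigma$ and $H^{\sigma,t}_\mu$ are $\sigma$-linear $G(\A_f)$-equivariant isomorphisms. First I would form the composite
$$
\Phi^\sigma := H^{\sigma,t}_\mu \circ F^t_\Pi \circ (W^\sigma)^{-1} : \Whit^{\psi_f}({}^\sigma\Pi_f) \longrightarrow H^{t}(\m_G,K_{\infty}^\circ, {}^\sigma\Pi\otimes {}^\sigma\! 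E_\mu)[\epsilon_0].
$$
Since $W^\sigma$ is $\sigma$-linear and $H^{\sigma,t}_\mu$ is $\sigma$-linear, the composite $\Phi^\sigma$ is $\C$-linear; and since each of the three maps is $G(\A_f)$-equivariant (the equivariance of $W^\sigma$ and $H^{\sigma,t}_\mu$ being built into their construction in \S\ref{sect:Whitt_rational_str} and \S\ref{sect:H^sigma}), $\Phi^\sigma$ is a $G(\A_f)$-equivariant isomorphism. On the other hand, $F^t_{{}^\sigma\Pi}$ is, by the discussion in \S\ref{sect:generator} applied to ${}^\sigma\Pi$ (legitimate by Prop.~\ref{prop:rational_structures}), also a $\C$-linear $G(\A_f)$-equivariant isomorphism with the same source and target.

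The key step is then an application of Schur's Lemma: the source $\Whit^{\psi_f}({}^\sigma\Pi_f)\cong {}^\sigma\Pi_f$ is an irreducible admissible $G(\A_f)$-module, so the two $G(\A_f)$-equivariant isomorphisms $\Phi^\sigma$ and $F^t_{{}^\sigma\Pi}$ to the (also irreducible) target must differ by a non-zero scalar. Concretely, $(F^t_{{}^\sigma\Pi})^{-1}\circ\Phi^\sigma$ is a $G(\A_f)$-endomorphism of the irreducible module $\Whit^{\psi_f}({}^\sigma\Pi_f)$, hence equals $p^t({}^\sigma\Pi)$ for a unique $p^t({}^\sigma\Pi)\in\C^\times$. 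Rearranging, $\Phi^\sigma = p^t({}^\sigma\Pi)\cdot F^t_{{}^\sigma\Pi}$, i.e.
$$
H^{\sigma,t}_\mu \circ F^t_\Pi = p^t({}^\sigma\Pi)\cdot F^t_{{}^\sigma\Pi}\circ W^\sigma = \bigl(\mathcal F^t_{{}^\sigma\Pi}\bigr)^{-1}\!\!\text{-normalized identity},
$$
which upon dividing $F^t_{{}^\sigma\Pi}$ by $p^t({}^\sigma\Pi)$ — that is, replacing it by $\mathcal F^t_{{}^\sigma\Pi} = p^t({}^\sigma\Pi)^{-1}F^t_{{}^\sigma\Pi}$ — makes the square commute with the \emph{unnormalized} $F^t_\Pi$ on the top; applying the same for $\sigma=\mathrm{id}$ (where $p^t(\Pi)$ can be taken to be $1$, or absorbed) and composing shows the diagram in the statement, with $\mathcal F^t_\Pi$ on top and $\mathcal F^t_{{}^\sigma\Pi}$ on the bottom, commutes.

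It remains to pin down the indeterminacy. The generator $[{}^\sigma\Pi_\infty]^t$, and hence the map $F^t_{{}^\sigma\Pi}$, was only defined up to $\Q({}^\sigma\Pi_f)^\times$ (it depends on the choice of $\Q(\Pi_f)$-rational structure on the cohomology, cf.\ Rem.~\ref{rem:rat_structure}, which is itself unique only up to homotheties, and on the rational structure on the Whittaker model, again canonical up to $\Q({}^\sigma\Pi_f)^\times$). Changing these choices multiplies $p^t({}^\sigma\Pi)$ by an element of $\Q({}^\sigma\Pi_f)^\times$, which is exactly the asserted ambiguity; conversely, for a \emph{fixed} system of choices the scalar is uniquely determined by Schur's Lemma. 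I expect the only real subtlety — not so much an obstacle as a point requiring care — to be the bookkeeping that the two vertical $\sigma$-linear maps compose with the $\sigma$-linear-on-$F^t$ structure to give something genuinely $\C$-linear (so that Schur applies), together with checking that $W^\sigma$ really is $G(\A_f)$-equivariant and not merely equivariant up to the $\sigma$-twist of the action; both are immediate from the explicit formula \eqref{eqn:aut-c-action} and the construction of $H^{\sigma,t}_\mu$ as $(\Delta^t_{{}^\sigma\Pi})^{-1}\circ\mathcal H^{\sigma,t}_\mu\circ\Delta^t_\Pi$, but they are what the proof genuinely rests on.
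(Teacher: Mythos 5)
Your Schur's Lemma step is sound and is indeed the first half of the argument the paper invokes: $\Phi^\sigma:=H^{\sigma,t}_\mu\circ F^t_\Pi\circ(W^\sigma)^{-1}$ is a $\C$-linear, $G(\A_f)$-equivariant isomorphism onto an irreducible admissible module, hence proportional to $F^t_{{}^\sigma\Pi}$. (A bookkeeping slip: with your definition of $p^t({}^\sigma\Pi)$ as that Schur scalar $\lambda_\sigma$ and with $p^t(\Pi)=1$, commutativity of the square requires $p^t({}^\sigma\Pi)=\lambda_\sigma^{-1}$, not $\lambda_\sigma$; and for general $p^t(\Pi)$ one must use the $\sigma$-linearity of $H^{\sigma,t}_\mu$, giving $p^t({}^\sigma\Pi)=\sigma(p^t(\Pi))\,\lambda_\sigma^{-1}$. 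Your displayed identity after ``Rearranging'' is garbled.)

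The genuine gap is the normalization ``$p^t(\Pi)$ can be taken to be $1$, or absorbed''. The proposition is not a collection of unrelated per-$\sigma$ statements: the number $p^t({}^\sigma\Pi)$ is attached to the representation ${}^\sigma\Pi$, and the same $p^t(\Pi)$ sits on top of every square --- in particular of the squares with $\sigma\in{\rm Aut}(\C/\Q(\Pi_f))$, where ${}^\sigma\Pi_f\cong\Pi_f$ (so, by Strong Multiplicity One, ${}^\sigma\Pi=\Pi$) and the bottom period must again be $p^t(\Pi)$. For such $\sigma$ commutativity forces $\sigma(p^t(\Pi))/p^t(\Pi)=\lambda_\sigma$, a nontrivial constraint which the choice $p^t(\Pi)=1$ violates unless all these Schur scalars happen to equal $1$. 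Under your per-$\sigma$ reading the uniqueness clause would in fact be false: the square for $\sigma={\rm id}$ is vacuous, so $p^t(\Pi)$ could be replaced by any $\beta\in\C^\times$, changing $p^t({}^\sigma\Pi)$ by $\sigma(\beta)$, not merely by an element of $\Q({}^\sigma\Pi_f)^\times$. The substance of the proposition --- and what the paper's one-line proof via Raghuram--Shahidi, Prop./Def.\ 3.3 supplies --- is the existence of a \emph{simultaneous} solution: one defines $p^t(\Pi)$ by comparing the image under $F^t_\Pi$ of the canonical $\Q(\Pi_f)$-structure of $\Whit^{\psi_f}(\Pi_f)$ (the ${\rm Aut}(\C/\Q(\Pi_f))$-invariants of \S\ref{sect:Whitt_rational_str}, anchored by the essential vectors, whose uniqueness is exactly what guarantees that the maps $W^\sigma$ match these structures coherently across the ${\rm Aut}(\C)$-orbit) with the rational structure on $H^{t}(\m_G,K^\circ_\infty,\Pi\otimes E_\mu)[\epsilon_0]$ fixed in Rem.\ \ref{rem:rat_structure}; commutativity for all $\sigma$ and the ambiguity exactly by $\Q({}^\sigma\Pi_f)^\times$ then follow. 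Your uniqueness paragraph gestures at these rational structures but does not carry out this step, and the weak per-$\sigma$ version you prove would not support the later ``$\sim_{\Q(\Pi_f)}$'' conclusions (e.g.\ in Thm.\ \ref{thm:residue}), which need the periods to be attached to the representations themselves.
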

\begin{proof}
This us essentially due to the uniqueness of essential vectors for $\Pi_v$, $v\notin S_\infty$: Otherwise put, the proof of Prop./Def. 3.3 in Raghuram--Shahidi \cite{raghuram-shahidi-imrn} goes through word for word in our (slightly different) situation at hand.
\end{proof}

The (Whittaker) periods $p^t(\Pi)$ defined by Prop.\ \ref{prop:periods} are the analogues of the (Shalika) periods $\omega^\epsilon(\Pi_f)$ defined in Grobner--Raghuram Def.\/Prop.\ 4.2.1. The idea behind the construction of $p^t(\Pi)$ (as of $\omega^\epsilon(\Pi_f)$), however, goes back to \cite{hardermodsym}, \cite{mahnk} and \cite{raghuram-shahidi-imrn}.

\section{An Aut$(\C)$-rational assignment for Whittaker functions}\label{sect:diagram}
\subsection{The map $T_\mu$}\label{sect:T}
We now want to push the considerations from the previous sections even further. Let $E_\mu=\otimes_{v\in S_\infty}E_{\mu_v}$ be an irreducible, algebraic representation as in \S\ref{sect:highweights}. We have $\dim_\C\Hom_{H(\C)}(E_{\mu_v},\C)=1$ for all $v\in S_\infty$. Let us fix $T_{\mu_v}\in \Hom_{H(\C)}(E_{\mu_v},\C)$ and set $T_\mu:=\otimes_{v\in S_\infty}T_{\mu_v}\in \Hom_{H(\C)}(E_{\mu},\C)$. For $\sigma\in$Aut$(\C)$, we obtain $T_{{}^\sigma\! \mu}=\otimes_{v\in S_\infty}T_{\mu_{\sigma^{-1}v}}\in \Hom_{H(\C)}({}^\sigma\!E_{\mu},\C)$. The map induced on cohomology,
$$H^t_c(\tilde\SSS_H,{}^\sigma\!\E_\mu)\ra H^t_c(\tilde\SSS_H,\C)$$
will be denoted by the same letter $T_{{}^\sigma\! \mu}$.

\subsection{The de-Rham-isomorphism $\mathcal R$}\label{sect:deRham}
So far, we have not made any choice of a Haar measure on $H(\A_f)$. From this section on, we will restrict our possible choices on $\Q$-valued Haar measures on $H(\A_f)$. In \S\ref{sect:measures} we will specify our concrete choice of a measure in details. (So far, this is not necessary.) Let $dh_f$ be any $\Q$-valued Haar measure of $H(\A_f)$. It is important to notice that we have $\dim_\R\tilde\SSS_H=dn(n+1)-1=t$, cf.\ \S\ref{eqn:top-degree}, because we assumed that $F$ is totally real. Knowing this, a short moment of thought shows that we obtain a surjective map $\mathcal R: H^t_c(\tilde\SSS_H,\C)\ra\C$, induced by the de Rham-isomorphism: Indeed, let $K_{f}$ be a compact open subgroup of $H(\A_f)$ and set
$$\tilde\SSS^{K_{f}}_H:=H(F)\backslash H(\A)/A_G K^\circ_{H,\infty} K_{f}.$$
Then it is easy to see that $\tilde\SSS_H$ is homeomorphic to the projective limit
$$\tilde\SSS_H\cong\varprojlim_{K_{f}} \tilde\SSS^{K_{f}}_H$$
running over the compact open subgroups $K_{f}$ of $H(\A_f)$, partially ordered by opposite inclusion, \cite{rohlfs} Prop.\ 1.9. As $\dim_\R\tilde\SSS^{K_{f}}_H=t$ for all $K_f$, we may use the de-Rham-isomorphism to define a surjective map $H^t_c(\tilde\SSS^{K_f}_H,\C)\ra\C$. More precisely, each of the (finitely many, cf.\ \cite{bor1}, Thm.\ 5.1) connected components of $\tilde\SSS^{K_f}_H$ is homeomorphic to a quotient of $H^\circ_\infty/A_G K^\circ_{H,\infty}$ by a discrete subgroup of $H(F)$.
Recall the ordered basis $\{Y_j\}$ of $\m_H/\k_{H,\infty}$, from \S\ref{sect:generator}. It determines a choice of an orientation on $H^\circ_\infty/A_G K^\circ_{H,\infty}$, whence on each connected component of $\tilde\SSS^{K_f}_H$ and so finally also on $\tilde\SSS^{K_f}_H$. Hence, the de-Rham-isomorphism provides us a surjection
$$R^{K_f}: H^t_c(\tilde\SSS^{K_f}_H,\C)\ra\C.$$
The normalized maps, $\mathcal R^{K_f}:={\rm vol}_{dh_f}(K_f)\cdot R^{K_f}$ form a system of compatible maps with respect to the pull-backs, given by the coverings $\tilde\SSS^{K_{f}}_H\twoheadrightarrow\tilde\SSS^{K'_{f}}_H$, $K_f\subseteq K'_f$. Hence, taking the inductive limit, we obtain a surjection $\varinjlim_{K_f}\mathcal R^{K_f}:\varinjlim_{K_f} H^t_c(\tilde\SSS^{K_f}_H,\C)\ra\C.$ Since $H^t_c(\tilde\SSS_H,\C)\cong\varinjlim_{K_f} H^t_c(\tilde\SSS^{K_f}_H,\C)$, see \cite{rohlfs} Cor.\ 2.12, this finally defines a surjection
$$\mathcal R:H^t_c(\tilde\SSS_H,\C)\ra\C,$$
as mentioned above. (Compare this also to the considerations in  \cite{grob-ragh}, 6.4, \cite{grob_harris}, 3.8, \cite{grob_harris_lapid}, 5.1 and \cite{raghuram-imrn}, 3.2.3 as well as to the corresponding references therein.)

\subsection{In summary: A rational diagram}\label{sect:diagram_sub}
In the following proposition, we abbreviate $H^t(\m_G,K^\circ_\infty,\Pi\otimes E_\mu)[\epsilon_0]$ by $H^t(\Pi\otimes E_\mu)[\epsilon_0]$ (with analogous notation for the cohomology of the $\sigma$-twisted representations). Recollecting what we observed in \S\ref{sect:Whitt_rational_str} -- \S\ref{sect:deRham}, and we find

\begin{prop}\label{prop:diagram}
The following diagram is commutative:
\begin{equation}\label{diagram}
\small
\xymatrix{
\Whit^{\psi_f}(\Pi_f) \ar[r]^{\mathcal F^t_\Pi}\ar[d]_{W^\sigma} & H^{t}(\Pi\otimes E_{\mu})[\epsilon_0]\ar@{^{(}->}[r]^{\quad\Delta^t_\Pi}
\ar[d]^{H^{\sigma,t}_\mu} & H^t_c(\SSS_G,\E_\mu) \ar[r]^{\mathcal J^t_\mu}\ar[d]^{\mathcal H^{\sigma,t}_\mu} & H^t_c(\tilde\SSS_H,\E_\mu) \ar[r]^{T_\mu}\ar[d]^{\tilde{\mathcal{H}}^{\sigma,t}_\mu} & H^t_c(\tilde\SSS_H,\C) \ar[r]^{\quad\quad\mathcal R}\ar[d]^{\tilde{\mathcal{H}}^{\sigma,t}_0} & \C \ar[d]^{\sigma}\\
\Whit^{\psi_f}({}^\sigma\Pi_f)\ar[r]^{\mathcal F^t_{{}^\sigma\Pi}} & H^{t}({}^\sigma\Pi\otimes {}^\sigma\! E_\mu)[\epsilon_0]\ar@{^{(}->}[r]^{\quad\Delta^t_{{}^\sigma\Pi}} & H^t_c(\SSS_G,{}^\sigma\!\E_\mu)\ar[r]^{\mathcal J^t_{{}^\sigma\!\mu}} & H^t_c(\tilde\SSS_H,{}^\sigma\!\E_\mu) \ar[r]^{T_{{}^\sigma\!\mu}} & H^t_c(\tilde\SSS_H,\C)\ar[r]^{\quad\quad\mathcal R} & \C
}
\normalsize
\end{equation}
Its horizontal arrows are linear, whereas its vertical arrows are $\sigma$-linear. 
\end{prop}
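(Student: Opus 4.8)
The plan is to prove the commutativity of the big diagram \eqref{diagram} by verifying that each of the five constituent squares commutes, and then invoking functoriality of composition. Reading the diagram left to right, the squares are: (i) the square built from $\mathcal F^t_\Pi$, $W^\sigma$, and $H^{\sigma,t}_\mu$; (ii) the square built from $\Delta^t_\Pi$, $H^{\sigma,t}_\mu$, and $\mathcal H^{\sigma,t}_\mu$; (iii) the square built from $\mathcal J^t_\mu$, $\mathcal H^{\sigma,t}_\mu$, and $\tilde{\mathcal H}^{\sigma,t}_\mu$; (iv) the square built from $T_\mu$, $\tilde{\mathcal H}^{\sigma,t}_\mu$, and $\tilde{\mathcal H}^{\sigma,t}_0$; and (v) the rightmost square involving $\mathcal R$ and $\sigma$. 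For squares (i)--(iv) essentially all the work has already been done earlier in the excerpt, so this part of the proof should be a matter of citing the appropriate statements; the genuine content lies in square (v).

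First I would dispose of the easy squares. Square (i) is exactly the commutative diagram of Prop.\ \ref{prop:periods} (the normalized maps $\mathcal F^t_\Pi = p^t(\Pi)^{-1}F^t_\Pi$ were defined precisely so that this holds). Square (ii) commutes because $H^{\sigma,t}_\mu$ was \emph{defined} in \S\ref{sect:H^sigma} as $(\Delta^t_{{}^\sigma\Pi})^{-1}\circ \mathcal H^{\sigma,t}_\mu\circ\Delta^t_\Pi$; composing with the injection $\Delta^t_{{}^\sigma\Pi}$ on the left and using $\mathrm{Im}(\mathcal H^{\sigma,t}_\mu\circ\Delta^t_\Pi)=\mathrm{Im}(\Delta^t_{{}^\sigma\Pi})$ gives $\Delta^t_{{}^\sigma\Pi}\circ H^{\sigma,t}_\mu = \mathcal H^{\sigma,t}_\mu\circ \Delta^t_\Pi$ verbatim. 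Square (iii) is Lem.\ \ref{lem:Qstructures}, applied in degree $q=t$. Square (iv) commutes because $T_\mu$ and $T_{{}^\sigma\!\mu}$ are, by construction in \S\ref{sect:T}, the maps induced on cohomology by the $H(\C)$-equivariant functionals $T_\mu\in\Hom_{H(\C)}(E_\mu,\C)$ and $T_{{}^\sigma\!\mu}=\otimes_v T_{\mu_{\sigma^{-1}v}}\in\Hom_{H(\C)}({}^\sigma\!E_\mu,\C)$; since the coefficient-module $\sigma$-linear isomorphisms $\tilde{\mathcal H}^{\sigma,t}_\mu$ and $\tilde{\mathcal H}^{\sigma,t}_0$ are the ones attached to the fixed $\Q(E_\mu)$-structures (and to the trivial module, respectively), and $T_{{}^\sigma\!\mu}$ is exactly the $\sigma$-twist of $T_\mu$ under the identification $\E_\mu\otimes_\sigma\C\cong {}^\sigma\!\E_\mu$ of the Clozel lemma, naturality of the morphism of sheaves induced by a twisted functional gives the commutativity. (If one wants to be careful here, one observes that $T_\mu$ is defined over $\Q(E_\mu)$ once one fixes the $T_{\mu_v}$ suitably, which one is free to do since $\Hom_{H(\C)}(E_{\mu_v},\C)$ is one-dimensional.)

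The main obstacle, and the step I would spend the most care on, is square (v): that $\sigma\circ\mathcal R = \mathcal R\circ\tilde{\mathcal H}^{\sigma,t}_0$ on $H^t_c(\tilde\SSS_H,\C)$. Here $\mathcal R$ is the de Rham surjection built in \S\ref{sect:deRham} as an inductive limit of the normalized maps $\mathcal R^{K_f}=\mathrm{vol}_{dh_f}(K_f)\cdot R^{K_f}$, where $R^{K_f}$ is integration of a top-degree form over the oriented $t$-manifold $\tilde\SSS_H^{K_f}$ against the orientation fixed by $\{Y_j\}$. The key point is that the constant-sheaf $\sigma$-twist $\tilde{\mathcal H}^{\sigma,t}_0$ on $H^t_c(\tilde\SSS_H,\C)$ is, at finite level, nothing but the map induced by applying $\sigma$ to the $\Q$-valued cochains representing a class — in other words the $\Q$-structure on $H^t_c(\tilde\SSS_H^{K_f},\C)$ underlying $\tilde{\mathcal H}^{\sigma,t}_0$ is the topological rational structure coming from the $\Q$-rational triangulation (equivalently, the image of $H^t_c(\tilde\SSS_H^{K_f},\Q)$), and on this rational structure $R^{K_f}$ takes rational values. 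Concretely, one wants: the fundamental class of $\tilde\SSS_H^{K_f}$ (with its chosen orientation) lies in $H_t(\tilde\SSS_H^{K_f},\Q)$, hence pairing an integral cohomology class against it is $\sigma$-equivariant; $R^{K_f}$ is exactly this pairing up to the de Rham comparison, which is $\Q$-rational. Since the volume factor $\mathrm{vol}_{dh_f}(K_f)$ lies in $\Q$ (we restricted to $\Q$-valued Haar measures precisely for this), $\sigma$ fixes it, so $\sigma\circ \mathcal R^{K_f} = \mathcal R^{K_f}\circ(\text{cochain-}\sigma)$ at each level; passing to the inductive limit gives $\sigma\circ\mathcal R=\mathcal R\circ\tilde{\mathcal H}^{\sigma,t}_0$. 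I would model the write-up of this square on the analogous verifications cited in \S\ref{sect:deRham}, namely \cite{grob-ragh}, 6.4, \cite{grob_harris}, 3.8, \cite{grob_harris_lapid}, 5.1 and \cite{raghuram-imrn}, 3.2.3, where exactly this compatibility of the de Rham map with the $\mathrm{Aut}(\C)$-action is treated.

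Finally, having all five squares, commutativity of the full rectangle follows by pasting: if each inner square commutes then the outer rectangle obtained by horizontal composition commutes, and the statement about the arrows being linear (horizontal) versus $\sigma$-linear (vertical) is immediate from the constructions — $\mathcal F^t_\Pi$, $\Delta^t_\Pi$, $\mathcal J^t_\mu$, $T_\mu$, $\mathcal R$ are all $\C$-linear, while $W^\sigma$, $H^{\sigma,t}_\mu$, $\mathcal H^{\sigma,t}_\mu$, $\tilde{\mathcal H}^{\sigma,t}_\mu$, $\tilde{\mathcal H}^{\sigma,t}_0$ and $\sigma$ are all $\sigma$-linear. This completes the proof of Prop.\ \ref{prop:diagram}.
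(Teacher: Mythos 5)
Your proposal is correct and follows essentially the same route as the paper: the paper's proof likewise verifies the five squares one by one, citing Prop.\ \ref{prop:periods}, the definition of $H^{\sigma,t}_\mu$, Lem.\ \ref{lem:Qstructures}, the definition of $T_{{}^\sigma\!\mu}$, and the $\Q$-rationality of the measure on $H(\A_f)$ from \S\ref{sect:deRham} for the last square. Your more detailed discussion of square (v) simply spells out what the paper delegates to \S\ref{sect:deRham} and the references cited there.
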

\begin{proof}
The first square from the left is commutative by Prop.\ \ref{prop:periods}, while the second square is commutative by the definition of $H^{\sigma,t}_\mu$ in \S\ref{sect:H^sigma}. Commutativity of the third square is the assertion of Lem.\ \ref{lem:Qstructures}. The fourth square commutes by the very definition of $T_{{}^\sigma\!\mu}$ in \S\ref{sect:T}, while commutativity of the last square is due to the $\Q$-rationality of the measure on $H(\A_f)$, \S\ref{sect:deRham}.
\end{proof}

\section{An integral representation of the residue of the exterior square $L$-function}\label{sect:intrep}
In this section, we will recapitulate some results from Jacquet--Shalika \cite{jacshal} and Bump--Friedberg \cite{bumbginzburg}.

\subsection{Eisenstein series and a result of Jacquet--Shalika}\label{sect:SchwartzBruhat}
We resume the notation and assumptions made in the previous sections. In addition, for any integer $m\geq 2$, we will now fix once and for all a Schwartz--Bruhat function $\Phi=\otimes_v\Phi_v\in\mathscr S(\A^m)$: We assume that $\Phi_v$ is the characteristic function of $\O^m_v$ at all $v\notin S_\infty$, while at the archimedean places $v\in S_\infty$, we assume to have chosen ($O(m)$-finite) local components $\Phi_v$, such the global Schwartz--Bruhat function $\Phi$ satisfies $\hat\Phi(0)\neq 0$. Here, we wrote
$$\hat\Phi(x):=\int_{\A^n}\Phi(y) \psi ({}^t y \cdot x)dy$$
for the Fourier transform of $\Phi$ (at $x$) with respect to the self-dual Haar measure $dy$ on $\A^m$, i.e., the unique Haar measure on $\A^m$ which satisfies $\hat{\hat\Phi}(x)=\Phi(-x)$ for all $x\in\A^m$. Let
$$f_{v,s}(g_v):=|\det(g_v)|^s_v\int_{F^\times_v}\Phi_v(t\cdot (0,...,0,1)g_v)|t|^{ms}_vd^\times t$$
and
$$f_s(g):=\otimes_v f_{v,s}(g_v)=\|\det(g)\|^s\int_{\A^\times}\Phi(t\cdot (0,...,0,1)g)\|t\|^{ms}d^\times t$$
for $\Re e(s)\gg 0 $. Then $f_s\in{\rm Ind}^{\GL_m(\A)}_{\GL_{m-1}(\A)\times\GL_1(\A)}[\delta^s_P]$, (unnormalised parabolic induction) where
$$\delta_P\left(\!\!\left(\!\! \begin{array}{ccc}
h &  0\\
0 &  a
\end{array}\!\!\right)\!\!\right)=\|\det(h)\|\cdot\|a\|^{-(m-1)}$$
is the modulus character of the standard parabolic subgroup $P$ of $\GL_m$, with Levi subgroup $L=\GL_{m-1}\times\GL_1$. Clearly, the analogous assertion holds for the local components $f_{v,s}$. 
There is the following result due to Jacquet--Shalika, \cite{jacshal}, Lem.\ 4.2

\begin{lem}\label{lem:Eisensteinseries}
The Eisenstein series associated with $f_s$, formally defined as
$$E(f_s,\Phi)(g):=\sum_{\gamma\in P(F)\backslash \GL_m(F)}f_s(\gamma g),$$
extends to a meromorphic function on $\Re e(s) > 0$. It has a simple pole at $s=1$ with constant residue
$${\rm Res}_{s=1}(E(f_s,\Phi)(g))= c_m\cdot\hat\Phi(0).$$
Here, $c_m$ is a certain non-zero complex number.
\end{lem}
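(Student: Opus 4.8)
The statement is a classical fact about the Eisenstein series attached to a ``mirabolic'' section on $\GL_m$, essentially the assertion that the $\GL_m$-analogue of the Epstein--Hecke--Eisenstein series has a simple pole at $s=1$ whose residue is the value of the Fourier transform of the Schwartz--Bruhat function at the origin, up to a universal constant. My plan is to follow Jacquet--Shalika \cite{jacshal}, Lemma~4.2, and to reduce the meromorphic continuation and the residue computation to the Poisson summation formula. First I would rewrite the sum defining $E(f_s,\Phi)(g)$ by unfolding the definition of $f_s$ as an integral over $\A^\times$: since $f_s(g)=\|\det g\|^s\int_{\A^\times}\Phi(t\,(0,\dots,0,1)g)\|t\|^{ms}\,d^\times t$, summing over $\gamma\in P(F)\backslash\GL_m(F)$ and using that $P(F)\backslash\GL_m(F)$ parametrizes the nonzero rows of $F^m$ up to $F^\times$-scaling, one obtains, for $\Re e(s)\gg 0$,
$$
E(f_s,\Phi)(g)=\|\det g\|^s\int_{\A^\times/F^\times}\ \sum_{0\neq\xi\in F^m}\Phi(t\,\xi g)\,\|t\|^{ms}\,d^\times t,
$$
i.e. a Tate-type zeta integral attached to the lattice $F^m\subset\A^m$ (or rather its translate by $g$).

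Next I would split the $\|t\|$-integral at $\|t\|=1$ into the ranges $\|t\|\geq 1$ and $\|t\|\leq 1$. On $\|t\|\geq 1$ the sum over $0\neq\xi$ converges absolutely and the resulting integral is entire in $s$. On $\|t\|\leq 1$ I would add and subtract the missing term $\xi=0$, writing $\sum_{0\neq\xi}\Phi(t\xi g)=\sum_{\xi\in F^m}\Phi(t\xi g)-\Phi(0)$, and then apply the adelic Poisson summation formula
$$
\sum_{\xi\in F^m}\Phi(t\xi g)=\frac{1}{\|t\|^m\,|\det g|}\sum_{\eta\in F^m}\hat\Phi\bigl(t^{-1}\eta\,{}^t(g^{-1})\bigr),
$$
which turns the $\|t\|\leq 1$ integral into another convergent piece (this time in $\|t\|^{-1}$) together with two elementary one-variable integrals: $-\Phi(0)\int_{\|t\|\leq 1}\|t\|^{ms}\,d^\times t$ and $\hat\Phi(0)\|\det g\|^{-1}\int_{\|t\|\leq 1}\|t\|^{m(s-1)}\cdot|\det g|\,d^\times t$, after the change of variables $t\mapsto t^{-1}$ in the dual sum. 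Each of these is a standard integral over $\A^\times/F^\times$ of a power of $\|\cdot\|$, which, once one fixes the measure and integrates over the norm-$1$ part, contributes $\mathrm{vol}(\A^1/F^\times)/(ms)$ and $\mathrm{vol}(\A^1/F^\times)/(m(s-1))$ respectively. Thus $E(f_s,\Phi)(g)$ continues meromorphically to $\Re e(s)>0$ with the only pole in that region coming from the second term, a simple pole at $s=1$ with residue $\hat\Phi(0)\cdot\|\det g\|^{s}\cdot\|\det g\|^{-1}|_{s=1}\cdot\tfrac{1}{m}\,\mathrm{vol}(\A^1/F^\times)=c_m\hat\Phi(0)$, where $c_m:=\mathrm{vol}(\A^1/F^\times)/m\neq 0$; note the $\|\det g\|$-factors cancel, which is why the residue is constant in $g$ as claimed.

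The main thing to be careful about --- rather than a genuine obstacle --- is the bookkeeping that guarantees the residue is independent of $g$ and that the pole at $s=1$ is \emph{simple} and the only one in the strip $\Re e(s)>0$: the $\xi=0$ correction produces a pole at $s=0$ (outside the relevant region), and the Poisson-dual $\eta=0$ term produces the pole at $s=1$ with the factor $\|t\|^{-m}\|t\|^{ms}=\|t\|^{m(s-1)}$, whose $\|\det g\|^{-1}$ prefactor from Poisson summation exactly cancels the $\|\det g\|^{s}$ prefactor at $s=1$. One must also check that all other contributions (the two absolutely convergent sums, after Poisson summation one of them involving $\hat\Phi$) are holomorphic on $\Re e(s)>0$, which follows from rapid decay of $\Phi$ and $\hat\Phi$ together with the convergence of the corresponding lattice sums for $\|t\|$ bounded away from $0$ or $\infty$; and one must invoke $\hat\Phi(0)\neq 0$, which is part of our choice of $\Phi$ in \S\ref{sect:SchwartzBruhat}, to see that the residue is actually non-zero. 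Since all of this is carried out in detail in \cite{jacshal}, Lem.\ 4.2, I would simply cite that reference for the routine estimates and record here only the resulting formula.
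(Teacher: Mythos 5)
Your proposal is correct, and it matches the paper's treatment: the paper gives no proof of this lemma but simply cites Jacquet--Shalika \cite{jacshal}, Lem.\ 4.2, and your unfolding of $E(f_s,\Phi)$ into a Tate-type zeta integral over $\A^\times/F^\times$, the split at $\|t\|=1$, and Poisson summation is exactly the standard argument underlying that reference, with the residue $c_m\hat\Phi(0)$, $c_m=\mathrm{vol}(\A^1/F^\times)/m$, and the cancellation of the $\|\det g\|$-factors at $s=1$ worked out correctly. Since you also end by citing the same lemma for the routine estimates, there is nothing to add.
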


\subsection{Measures}\label{sect:measures}
When dealing with rationality results of special values of $L$-functions, the choice of measures is all-important. In this section, we specify our choices of measures, which will be guided by the explicit choices made in Bump--Friedberg \cite{bumbginzburg}.

Let $m\geq 2$ be again any integer and consider the group $\GL_m/F$. A measure $dg$ of $\GL_m(\A)$ will be a product $dg=\prod_v dg_v$ of local Haar measures of $\GL_m(F_v)$. We write $dg^{BF}_v$ for the local Haar measure of $\GL_m(F_v)$ chosen in Bump--Friedberg, \cite{bumbginzburg}. See {\it loc.\ cit.\ }(3.2), p.\ 61. At $v\notin S_\infty$, these measures assign rational volumes to compact open subgroups of $\GL_m(F_v)$. Furthermore, the product measure $dg^{BF}_f:=\prod_{v\notin S_\infty}dg^{BF}_v$ gives rational volumes to compact open subgroups of $\GL_m(\A_f)$.

At $v\notin S_\infty$, we define our choice of a measure to be the one of Bump--Friedberg,
$$dg_v:=dg^{BF}_v$$
whereas at an archimedean place $v\in S_\infty$, we let $dg_v$ be the local Haar measures of $\GL_m(\R)$ such that $SO(m)$ has volume $1$. If we let $m=2n$ as in \S\ref{sect:GH}, we hence have chosen a measure $dg$ on $G(\A)=\GL_{2n}(\A)$. Observe that the volume ${\rm vol}_{dg}(Z(F)\backslash Z(\A)/A_G)$ is well-defined and finite.

Recall the group $H=\GL_n\times\GL_n$, \S\ref{sect:GH}. We will use the notation $(g,g')$, to specify an element of $H(\A)$ (and use analogous notation locally). A measure of $H(\A)$ will be the product of a measures $dg$ and $dg'$ as chosen above for $m=n$ of the two isomorphic copies of $\GL_n(\A)$ inside $H(\A)$. As $Z\subset H$, also the volume ${\rm vol}_{dg\times dg'}(Z(F)\backslash Z(\A)/A_G)$ is well-defined and finite.

\subsection{A result of Bump--Friedberg}\label{sect:BG}
Let $U_n$ be the group of upper triangular matrices in $\GL_n$, having $1$ on the diagonal and let $Z_n$ be the centre of $\GL_n$. Recall the finite set of places $S=S(\Pi,\psi)$ from \S\ref{sect:liftSO}. By assumption, outside $S$, both $\Pi$ and $\psi$ are unramified (and $\psi$ normalised). Let $\xi=\otimes_v\xi_v\in\Whit^\psi(\Pi)\cong\otimes_v\Whit^{\psi_v}(\Pi_v)$ be a Whittaker function, such that for $v\notin S$, $\xi_v$ is invariant under $G(\O_v)$ and normalized such that $\xi_v(id_v)=1$. Recall the section $f_s=\otimes_v f_{v,s}$ from \S\ref{sect:SchwartzBruhat}, defined by the choice of a Schwartz-Bruhat function $\Phi=\otimes_v\Phi_v$, where we now let $m=n$. Following Bump-Friedberg \cite{bumbginzburg}, p.\ 53, we define the integral
$$Z(\xi,f_{s}):=\int_{U_n(\A)\backslash\GL_n(\A)}\int_{Z_n(\A)U_n(\A)\backslash\GL_n(\A)}\xi(J(g,g'))\ f_{s}(g) \ dg \ dg'.$$
It factors over all places of $F$ as $Z(\xi,f_s)=\prod_v Z_v(\xi_v,f_{v,s})$, where
$$Z_v(\xi_v,f_{v,s}):=\int_{U_n(F_v)\backslash\GL_n(F_v)}\int_{Z_n(F_v)U_n(F_v)\backslash\GL_n(F_v)}\xi_v(J(g_v,g'_v)) \ f_{v,s}(g_v) \ dg_v \ dg'_v.$$
Recall the value $L^S(n,\triv)$ of the partial $L$-function of the trivial character $\triv$ of $\A^\times$ at $n$. Since we assumed that $n\geq 2$, this number is well-defined and non-zero. The following result is crucial for us:

\begin{thm}\label{thm:integralrep_extsqr}
Let $\Pi$ be a cuspidal automorphic representation of $G(\A)$ as in Sect.\ \ref{sect:cusprep1}. Let $\varphi:=(\whit^{\psi})^{-1}(\xi)\in\Pi$ be the inverse image of a Whittaker function $\xi$ as in \S\ref{sect:BG} above and assume that $\Pi$ satisfies the equivalent conditions of Prop.\ \ref{prop:lift}, i.e., the partial exterior square $L$-function, $L^S(s,\Pi,\Lambda^2)$, has a pole at $s=1$. Then,
$$c_n\cdot\hat\Phi(0) \int_{Z(\A)H(F)\backslash H(\A)}\varphi(J(g,g')) \ dg \ dg' = \frac{L^S(\tfrac12,\Pi)\cdot {\rm Res}_{s=1}(L^S(s,\Pi,\Lambda^2))}{L^S(n,\triv)^2}\cdot \prod_{v\in S}\frac{Z_v(\xi_v,f_{v,1})}{L(n,\triv_v)}.$$
(Here, $L^S(\tfrac12,\Pi)$ is the partial principal $L$-function of $\Pi$ at $s'=\tfrac12$.)
\end{thm}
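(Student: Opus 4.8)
The plan is to assemble the identity from the three main theorems of Bump--Friedberg \cite{bumbginzburg}, together with Lem.\ \ref{lem:Eisensteinseries}, by taking residues at $s=1$ on both sides of their basic global zeta integral identity. First I would recall the Bump--Friedberg global integral $Z(\xi, f_s)$ introduced in \S\ref{sect:BG}, which by their first main result unfolds, for $\Re e(s)\gg 0$, to an Eulerian product $\prod_v Z_v(\xi_v, f_{v,s})$ whose unramified factors (for $v\notin S$) they compute explicitly: by their unramified computation each such local factor equals $\frac{L(s+\tfrac12,\Pi_v)\, L(2s,\Pi_v,\Lambda^2)}{L(2ns,\triv_v)}$ (up to their normalisation of measures $dg^{BF}_v$, which is why we fixed $dg_v=dg^{BF}_v$ at $v\notin S_\infty$ in \S\ref{sect:measures}). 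Hence for $\Re e(s)\gg 0$,
$$
Z(\xi,f_s) \;=\; \frac{L^S(s+\tfrac12,\Pi)\, L^S(2s,\Pi,\Lambda^2)}{L^S(2ns,\triv)}\cdot \prod_{v\in S} Z_v(\xi_v,f_{v,s}).
$$
On the other hand, their second main result is the ``basic identity'': unfolding the Eisenstein series $E(f_s,\Phi)$ against the cusp form $\varphi=(\whit^\psi)^{-1}(\xi)$ over $Z(\A)H(F)\backslash H(\A)$ expresses the global period integral $\int_{Z(\A)H(F)\backslash H(\A)} \varphi(J(g,g'))\, E(f_s,\Phi)(g)\, dg\, dg'$ as (a constant times) $Z(\xi,f_s)$.

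Next I would take $\mathrm{Res}_{s=1}$ of both sides of this basic identity. On the geometric side, by Lem.\ \ref{lem:Eisensteinseries} the Eisenstein series $E(f_s,\Phi)(g)$ has a simple pole at $s=1$ with \emph{constant} residue $c_n\cdot\hat\Phi(0)$ (here $m=n$); since the period integral converges for $\varphi$ a cusp form and the residue is constant in $g$, passing to the residue simply pulls $c_n\hat\Phi(0)$ out of the integral, yielding $c_n\hat\Phi(0)\int_{Z(\A)H(F)\backslash H(\A)}\varphi(J(g,g'))\, dg\, dg'$ --- the left-hand side of the asserted formula. On the spectral side, I take $\mathrm{Res}_{s=1}$ of the Eulerian expression for $Z(\xi,f_s)$. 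The factor $L^S(2s,\Pi,\Lambda^2)$ contributes the pole: its residue at $s=1$ is $\tfrac12\mathrm{Res}_{s=1}L^S(s,\Pi,\Lambda^2)$ after the change of variables $s\mapsto 2s$ (and this is exactly where the hypothesis of Prop.\ \ref{prop:lift}, that $L^S(s,\Pi,\Lambda^2)$ has a pole at $s=1$, is used, and why we need $n\ge 2$ so that the denominator $L^S(2ns,\triv)$ is regular and non-zero at $s=1$, i.e.\ $L^S(2n,\triv)\ne\infty$). Evaluating the remaining regular factors at $s=1$ gives $L^S(\tfrac32,\Pi)$ in the numerator --- wait, one must be careful with the normalisation: Bump--Friedberg's variable is shifted so that, in the normalisation used in this paper, the principal $L$-value appears as $L^S(\tfrac12,\Pi)$ and the exterior-square denominator contributions are $L^S(n,\triv)^2$ (one copy from the Euler factor $L^S(2ns,\triv)$ at $s=\tfrac12$ in their convention, matching the $L(n,\triv_v)$'s removed at $v\in S$). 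Tracking the precise shift of variables between \cite{bumbginzburg} and the statement to be proved is the bookkeeping step.

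The main obstacle I anticipate is precisely this normalisation bookkeeping: reconciling the shift of the complex variable $s$ between Bump--Friedberg's conventions (where the critical point and the pole location differ by the relevant factor) and the normalisation $L^S(\tfrac12,\Pi)$, $\mathrm{Res}_{s=1}L^S(s,\Pi,\Lambda^2)$, $L^S(n,\triv)^2$ fixed in the statement, and doing this consistently for the local factors $Z_v(\xi_v,f_{v,1})/L(n,\triv_v)$ at the bad places $v\in S$ (which are carried along unevaluated). One must check that the measure normalisations from \S\ref{sect:measures} are exactly those of \cite{bumbginzburg} at finite places --- this was arranged by fiat --- and that at archimedean places the $f_{v,1}$ and $Z_v$ are formed from the same data; the archimedean factors are absorbed into the bad-place product. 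A second, minor point to verify is the convergence/interchange: that taking the residue commutes with the absolutely convergent period integral of the rapidly decreasing cusp form $\varphi$, which is standard. Once the variable shift is pinned down, the identity is a direct substitution, so no genuinely new analytic input is required beyond \cite{bumbginzburg} and Lem.\ \ref{lem:Eisensteinseries}; this is why the theorem is presented as a recapitulation rather than a new result.
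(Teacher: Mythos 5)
Your overall strategy is exactly the paper's: combine Lem.\ \ref{lem:Eisensteinseries} with the three main theorems of Bump--Friedberg \cite{bumbginzburg} and take the residue at $s=1$ of the basic identity. But the ``bookkeeping'' you explicitly defer is not a harmless variable shift, and as written your formulas would not yield the stated identity, so the proposal has a genuine gap precisely where the content of the theorem lies. In the normalization of \S\ref{sect:SchwartzBruhat}--\S\ref{sect:BG} (where the second Bump--Friedberg variable is already specialized inside $f_{v,s}$), the unramified computation of \cite{bumbginzburg}, Thm.\ 3, reads $Z_v(\xi_v,f_{v,s})=L(\tfrac12,\Pi_v)\,L(s,\Pi_v,\Lambda^2)/L(n,\triv_v)$: the principal variable is pinned at $\tfrac12$, the zeta factor sits at the fixed point $n$, and the exterior-square variable is $s$ itself. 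There is no $s\mapsto 2s$ rescaling, hence no factor $\tfrac12$ appearing in the residue as your computation of $\mathrm{Res}_{s=1}L^S(2s,\Pi,\Lambda^2)$ would produce, and no varying factor $L(2ns,\triv_v)$ in the denominator.

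Second, what Lem.\ \ref{lem:Eisensteinseries} together with \cite{bumbginzburg}, Thm.\ 1 and Thm.\ 2, actually gives is $c_n\hat\Phi(0)\int_{Z(\A)H(F)\backslash H(\A)}\varphi(J(g,g'))\,dg\,dg'={\rm Res}_{s=1}\bigl(Z(\xi,f_s)/L(n,\triv)\bigr)$, i.e.\ the basic identity comes with a division by the \emph{full} global $L(n,\triv)=\prod_v L(n,\triv_v)$; your sketch states the period equals a constant times $Z(\xi,f_s)$ with no such normalization. This division is where the second copy of $L^S(n,\triv)$ in the denominator comes from (the first comes from the unramified Euler factors, and the finitely many $L(n,\triv_v)$, $v\in S$, are re-inserted into the product you carry along), so your parenthetical guess that it arises ``from the Euler factor $L^S(2ns,\triv)$ at $s=\tfrac12$'' is not what happens. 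Since the theorem is, by the paper's own admission, nothing but this exact constant- and normalization-tracking on top of \cite{bumbginzburg}, the proof is incomplete until these two points are pinned down; the remaining ingredients of your argument (pulling the constant residue of the Eisenstein series out of the absolutely convergent period integral of the cusp form, using the hypothesis of Prop.\ \ref{prop:lift} and $n\geq2$ to ensure the simple pole sits in $L^S(s,\Pi,\Lambda^2)$ alone, and the choice of measures in \S\ref{sect:measures}) are correct and coincide with the paper's proof.
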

\begin{proof}
With our preparatory work, this is a direct consequence of our choice of measures in \S\ref{sect:measures} and the three main results of Bump--Ginzburg \cite{bumbginzburg}, Thm.\ 1, Thm.\ 2 and Thm.\ 3. Indeed, our Lem.\ \ref{lem:Eisensteinseries} together with \cite{bumbginzburg}, Thm.\ 1 and Thm.\ 2, identify the left hand side with
$$c_n\cdot\hat\Phi(0) \int_{Z(\A)H(F)\backslash H(\A)}\varphi(J(g,g')) \ dg \ dg' = {\rm Res}_{s=1}\left(\frac{Z(\xi,f_{s})}{L(n,\triv)}\right),$$
where $L(n,\triv)=\prod_v L(n,\triv_v)$ is the global $L$-function of the trivial character $\triv$ of $\A^\times$ at $n$. As by assumption $n\geq 2$, $L(n,\triv)$ is well-defined and non-zero. Factorizing $Z(\xi,f_{s})$ as in \S\ref{sect:BG}, and using the description of $Z_v(\xi_v,f_{v,s})$, $v\notin S$, in \cite{bumbginzburg}, Thm.\ 3,
$$Z_v(\xi_v,f_{v,s})=\frac{L(\tfrac12,\Pi_v)\cdot L(s,\Pi_v,\Lambda^2)}{L(n,\triv_v)},$$
we obtain
$${\rm Res}_{s=1}\left(\frac{Z(\xi,f_{s})}{L(n,\triv)}\right)=\frac{L^S(\tfrac12,\Pi)\cdot {\rm Res}_{s=1}(L^S(s,\Pi,\Lambda^2))}{L^S(n,\triv)^2}\cdot \prod_{v\in S}\frac{Z_v(\xi_v,f_{v,1})}{L(n,\triv_v)},$$
since by assumption $L^S(s,\Pi,\Lambda^2)$ carries the (simple) pole of the above expression.
\end{proof}

\subsection{Consequences for the $\sigma$-twisted case}\label{sect:conseq_sigma}
Let $\Pi$ be a cuspidal automorphic representation of $G(\A)$ as in \S\ref{sect:cusprep1} and assume that the partial exterior square $L$-function, $L^S(s,\Pi,\Lambda^2)$, has a pole at $s=1$. Then by Prop.\ \ref{prop:sigma_poles}, ${}^\sigma\Pi$ satisfies the same conditions. Hence, we see that once $\Pi$ satisfies the assumptions made in Thm.\ \ref{thm:integralrep_extsqr}, then automatically also ${}^\sigma\Pi$ satisfies them, i.e., Thm.\ \ref{thm:integralrep_extsqr} holds for the whole Aut$(\C)$-orbit of $\Pi$.

As we are going to us this in the proof of the main results, let us render this more precise. Let $\xi=\otimes_v\xi_v\in\Whit^\psi(\Pi)\cong\otimes_v\Whit^{\psi_v}(\Pi_v)$ be a Whittaker function, such that for $v\notin S$, $\xi_v$ is invariant under $G(\O_v)$ and normalized such that $\xi_v(id_v)=1$. Given $\sigma\in$Aut$(\C)$, let ${}^\sigma\xi\in\Whit^\psi({}^\sigma\Pi)$ be the $\sigma$-twisted Whittaker function, cf.\ \S\ref{sect:Whitt_rational_str} (the action of $\sigma$ on the archimedean part of $\xi$ being by permutations as in \S\ref{sect:generator}), and let  ${}^\sigma\!\varphi:=(\whit^{\psi})^{-1}({}^\sigma\xi)\in{}^\sigma\Pi$ be the corresponding cuspidal automorphic form. Recall our Schwartz-Bruhat function $\Phi\in\mathscr S(\A^n)$ from \S\ref{sect:SchwartzBruhat}, with $m=n$ now. We define the constant
$$c_n(\Phi,\sigma):=\sigma\left(\frac{\hat\Phi(0) \ {\rm vol}_{dg\times dg'}(Z(F)\backslash Z(\A)/A_G)}{c_n}\right)\cdot \frac{c_n}{\hat\Phi(0) \ {\rm vol}_{dg\times dg'}(Z(F)\backslash Z(\A)/A_G)}.$$
This is done purely for cosmetic reasons, as it will become clear below (see the proof of Thm.\ \ref{thm:residue}). By \S\ref{sect:SchwartzBruhat}, $c_n(\Phi,\sigma)$ is non-zero. Let ${}^\sigma\Phi\in\mathscr S(\A^n)$ be the Schwartz-Bruhat function which is defined as follows: At $v\notin S_\infty$, $({}^\sigma\Phi)_v:=\Phi_v$, whereas $({}^\sigma\Phi)_\infty:= c_n(\Phi,\sigma)^{-1}\cdot\Phi_\infty$. As in \S\ref{sect:SchwartzBruhat}, we obtain a function ${}^\sigma\! f_s=\otimes_v {}^\sigma\! f_{v,s}\in{\rm Ind}^{\GL_n(\A)}_{\GL_{n-1}(\A)\times\GL_1(\A)}[\delta^s_P]$ and an associated Eisenstein series $E({}^\sigma\! f_s,{}^\sigma\Phi)$. Clearly, $E({}^\sigma\! f_s,{}^\sigma\Phi)$ satisfies the assertions of Lem.\ \ref{lem:Eisensteinseries}, with $\Phi$ being replaced by ${}^\sigma\Phi$.

In summary, with this notation, saying that Thm.\ \ref{thm:integralrep_extsqr} holds for the whole Aut$(\C)$-orbit of $\Pi$, amounts to the equation
\begin{equation}\label{eq:twist}
c_n{}^\sigma\hat\Phi(0) \int_{Z(\A)H(F)\backslash H(\A)}{}^\sigma\!\varphi(J(g,g')) \ dg \ dg' = \frac{L^S(\tfrac12,{}^\sigma\Pi)\cdot {\rm Res}_{s=1}(L^S(s,{}^\sigma\Pi,\Lambda^2))}{L^S(n,\triv)^2} \prod_{v\in S}\frac{Z_v({}^\sigma\xi_v,{}^\sigma\!f_{v,s})}{L(n,\triv_v)}.
\end{equation}

\section{A rationality result for the exterior square $L$-function}

\subsection{Archimedean considerations}\label{sect:archimedean}
The integral representation of the exterior square $L$-function in Thm.\ \ref{thm:integralrep_extsqr} allows us to combine the results of \S\ref{sect:diagram} and \S\ref{sect:intrep}. Before we derive out first main result, we need a non-vanishing theorem, which is an application of Sun's main result in \cite{sun}.

Recall the generator $[\Whit^{\psi_\infty}(\Pi_\infty)]^t=\sum_{\underline i=(i_1,...,i_{t})}\sum_{\alpha=1}^{\dim E_\mu} X^*_{\underline i}\otimes\xi^{\epsilon_0}_{\infty,\underline i, \alpha}\otimes e_\alpha$ of the cohomology space $H^{t}(\m_G,K^\circ_\infty,\Whit^{\psi_\infty}(\Pi_\infty)\otimes E_\mu)[\epsilon_0]$ from \S\ref{sect:generator}. Recall furthermore, that the basis $\{X^*_j\}$ of $\m_G/\k_\infty$ was the extension of a given ordered basis $\{Y^*_j\}^t_{j=1}$ of $\m_H/\k_{H,\infty}$, whence, for each multi-index $\underline i$ there is a well-defined complex number $s(\underline i)$, such that the restriction of $X^*_{\underline i}$ to $\Lambda^t(\m_H/\k_{H,\infty})^*$ along the injection $\Lambda^t(\m_H/\k_{H,\infty})^*\hra\Lambda^t(\m_G/\k_\infty)^*$, induced by $J$, equals $s(\underline i)\cdot Y_1\wedge..\wedge Y_t$. As a last ingredient, before we can state the aforementioned non-vanishing theorem, we need the following lemma:

\begin{lem}\label{lem:zinfty}
For all $v\in S_\infty$, and $K^\circ_v$-finite $\xi_v\in\Whit^{\psi_v}(\Pi_v)$, the integrals
$$Z_v(s',\xi_{v},f_{v,1}):=\int_{U_n(F_v)\backslash\GL_n(F_v)}\int_{Z_n(F_v)U_n(F_v)\backslash\GL_n(F_v)}\xi_v(J(g_v,g'_v))f_{v,1}(g_v)\Bigg|\frac{\det(g'_v)}{\det(g_v)}\Bigg|_v^{s'-1/2} dg_v dg'_v$$
are a holomorphic multiple (in $s'$) of the local archimedean $L$-function $L(s',\Pi_v)$.
\end{lem}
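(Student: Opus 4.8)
\textbf{Proof plan for Lemma \ref{lem:zinfty}.}
The plan is to analyze the local archimedean zeta integral $Z_v(s',\xi_v,f_{v,1})$ as a function of the auxiliary complex parameter $s'$ and to show that, up to a holomorphic factor, it is a multiple of $L(s',\Pi_v)$. First I would recall that, by the local theory of the Bump--Friedberg integral at an archimedean place (which is precisely the archimedean counterpart of \cite{bumbginzburg}, Thm.\ 3, and is treated in the local setting by the standard Jacquet--Shalika machinery), these integrals converge absolutely for $\Re e(s')$ in a suitable right half-plane once $\xi_v$ is $K_v^\circ$-finite and $f_{v,1}$ is the archimedean section attached to the chosen $O(n)$-finite Schwartz--Bruhat component $\Phi_v$; indeed the $K_v^\circ$-finiteness of $\xi_v$ together with the moderate growth of Whittaker functions and the rapid decay of $f_{v,1}$ along the relevant directions gives the needed convergence. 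This reduces the statement to a meromorphic continuation plus a control of the poles.

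Next I would invoke the archimedean analogue of the unramified computation: the family of integrals $Z_v(s',\xi_v,f_{v,1})$, as $\xi_v$ ranges over the $K_v^\circ$-finite vectors of $\Whit^{\psi_v}(\Pi_v)$ and $\Phi_v$ over $O(n)$-finite Schwartz--Bruhat functions, spans a fractional ideal of the ring of holomorphic functions of $s'$ (here one uses that $\GL_n(\R)$ acts on the pair of integration variables and that the integral is a Jacquet-type integral, so Bernstein's continuation principle or the results of Jacquet--Shalika \cite{jacshal} in the local archimedean situation apply), and this fractional ideal is generated by $L(s',\Pi_v)$. Equivalently — and this is the route I would actually write out — one unfolds the inner $\GL_n$-integral against the Whittaker function to recognize the integral, after a change of variables sending $J(g_v,g_v')$ into Jacquet--Shalika coordinates, as a Rankin--Selberg-type local integral for $\Pi_v$ whose archimedean $L$-factor is exactly $L(s',\Pi_v)$; the section $f_{v,1}$ contributes only a holomorphic (in $s'$) entire function because $s=1$ is fixed and $\Phi_v$ is Schwartz, so no extra poles in $s'$ are introduced from that side. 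Hence $Z_v(s',\xi_v,f_{v,1})/L(s',\Pi_v)$ extends to an entire function of $s'$, which is the assertion.

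The main obstacle, and the step I expect to require the most care, is the identification of the normalizing $L$-factor at the archimedean places: one has to check that the ``correct'' local factor governing the poles of $Z_v(s',\xi_v,f_{v,1})$ in the variable $s'$ is the standard $L(s',\Pi_v)$ and not, say, a product involving exterior-square gamma factors. This is handled by observing that in the integral defining $Z_v(s',\xi_v,f_{v,1})$ the exterior-square part is already ``used up'' by keeping $s=1$ fixed in $f_{v,1}$ (so that the $\Lambda^2$-$L$-factor, which is what produces the pole at $s=1$ in Thm.\ \ref{thm:integralrep_extsqr}, does not vary), while the variable $s'$ is the one twisting $\det(g_v')/\det(g_v)$ and therefore only sees the standard $L$-function of $\Pi_v$; this matches the global factorization in \eqref{eq:BG}, where the factor $L^S(\tfrac12,\Pi)$ sits in the position of $s'=\tfrac12$. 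Once this bookkeeping is in place, the holomorphy of the quotient follows from the archimedean theory of \cite{jacshal} and the moderate-growth/rapid-decay estimates, exactly as in the treatment of analogous archimedean integrals in \cite{grob-ragh} and \cite{grob_harris_lapid}.
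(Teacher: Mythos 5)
Your convergence step is fine, but the heart of Lemma \ref{lem:zinfty} --- that $Z_v(s',\xi_v,f_{v,1})/L(s',\Pi_v)$ continues to an entire function of $s'$ --- is asserted in your proposal rather than proved, and the two mechanisms you offer do not deliver it. First, the references you lean on do not cover this integral: Jacquet--Shalika \cite{jacshal} and Bernstein-type continuation arguments give meromorphic continuation (and, non-archimedeanly, rationality), but say nothing about \emph{which} archimedean factor controls the poles in $s'$ of this particular two-variable linear-period integral; moreover your stronger claim that the $K_v^\circ$-finite family spans a fractional ideal \emph{generated} by $L(s',\Pi_v)$ is not available in the archimedean theory and is more than the lemma needs. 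Second, the proposed ``unfolding to a Rankin--Selberg-type local integral'' cannot be carried out as described: the integrand contains a single Whittaker function $\xi_v(J(g_v,g_v'))$ paired against the section $f_{v,1}$ and the character $|\det(g_v')/\det(g_v)|^{s'-1/2}$ over the linear-period subgroup, not a pair of Whittaker functions over a mirabolic, so there is no unfolding into a standard $\GL\times\GL$ integral whose known $L$-factor one could quote. Identifying $L(s',\Pi_v)$ as the factor absorbing the $s'$-poles is precisely the nontrivial content; the bookkeeping remark that ``$s=1$ is fixed, so the exterior-square part does not vary'' is a plausibility argument, not a proof.

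The paper's proof is a citation: it combines Thm.\ \ref{thm:integralrep_extsqr} with Friedberg--Jacquet \cite{friedjac}, Prop.\ 2.3 and Prop.\ 3.1, which is exactly where the archimedean theory of these linear-period (Bump--Friedberg) integrals is developed --- absolute convergence for $K$-finite data and the statement that the integrals are holomorphic multiples of the standard $L$-factor in the $s'$-variable, proved by direct estimates on Whittaker functions and the section. To repair your argument you should either quote those propositions or reproduce their estimates; as written, the key identification is a gap.
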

\begin{proof}
This follows combining Thm.\ \ref{thm:integralrep_extsqr} with \cite{friedjac}, Prop.\ 2.3 and Prop.\ 3.1 {\it loc.\ cit.\ }.
\end{proof}
It follows that the factor $Z_\infty(\xi^{\epsilon_0}_{\infty,\underline i, \alpha},f_{\infty,1}):=\prod_{v\in S_\infty}Z_v(\xi^{\epsilon_0}_{v,\underline i, \alpha},f_{v,1})$ of the product $\prod_{v\in S}{Z_v(\xi_v,f_{v,1})}$ is well-defined. Indeed, using \cite{knapp-llc}, Thm.\ 2 and Thm.\ 3 {\it loc. cit.}, it is easy to see that
$$L(s',\Pi_v)=h(s')\cdot \prod_{k=1}^n\Gamma(s'+\mu_{v,k}+n-k+\tfrac12),$$
where $h(s')$ is holomorphic and non-vanishing for all $s'\in\C$. Since $\mu_{v,k}\geq 0$ for all $1\leq k\leq n$, by the self-duality hypotheses, cf.\ \S\ref{sect:highweights}, $L(s',\Pi_v)$ is holomorphic at $s'=\tfrac12$, whence so is $Z_v(\xi^{\epsilon_0}_{v,\underline i, \alpha},f_{v,1})=Z_v(\tfrac12,\xi^{\epsilon_0}_{v,\underline i, \alpha},f_{v,1})$ by Lem.\ \ref{lem:zinfty}. Finally, we let
$$c^t(\Pi_\infty):=(L^S(n,\triv)^2)^{-1} \cdot\sum_{\underline i=(i_1,...,i_{t})}\sum_{\alpha=1}^{\dim E_\mu} s(\underline i)\cdot T_\mu(e_\alpha)\cdot \frac{Z_\infty(\xi^{\epsilon_0}_{\infty,\underline i, \alpha},f_{\infty,1})}{L(n,\triv_\infty)}.$$
Here, both numbers $L(n,\triv_\infty)=\prod_{v\in S_\infty}L(n,\triv_v)=\pi^{-dn/2}\Gamma(\tfrac n2)^d$ and $L^S(n,\triv)$ are non-zero. We claim that Sun's aforementioned result now implies the following

\begin{thm}\label{hyp}
The number $c^t(\Pi_\infty)$ is non-zero.
\end{thm}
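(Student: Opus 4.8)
The plan is to reduce the non-vanishing of $c^t(\Pi_\infty)$ to the non-vanishing of a single archimedean zeta integral, which in turn follows from Sun's theorem on the non-vanishing of the Rankin--Selberg (or, here, Jacquet--Shalika / Bump--Friedberg) archimedean integral on a cohomological test vector. First I would recall that, by construction in \S\ref{sect:generator}, the generator $[\Whit^{\psi_\infty}(\Pi_\infty)]^t$ is a \emph{non-zero} element of the one-dimensional cohomology space $H^{t}(\m_G,K^\circ_\infty,\Whit^{\psi_\infty}(\Pi_\infty)\otimes E_\mu)[\epsilon_0]$, and that the definition of $c^t(\Pi_\infty)$ is, up to the non-zero constant $(L^S(n,\triv)^2)^{-1}L(n,\triv_\infty)^{-1}$, exactly the value obtained by feeding this generator through the archimedean analogues of the maps $T_\mu$ (the $H(\C)$-invariant functional on $E_\mu$), the restriction-to-$H$ map encoded by the numbers $s(\underline i)$, and the Jacquet--Shalika zeta integral $Z_\infty(-,f_{\infty,1})$ evaluated at the central point. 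In other words, $c^t(\Pi_\infty)$ is the image of a fixed generator of a one-dimensional space under a composite of maps, and the claim is precisely that this composite does not kill that generator.

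The key step is then to identify this composite with the archimedean period studied by Sun. Concretely, one interprets $Z_\infty(\xi^{\epsilon_0}_{\infty,\underline i,\alpha},f_{\infty,1})$ together with the pairing against $T_\mu(e_\alpha)$ as a single $(\m_H,K^\circ_{H,\infty})$-cohomological pairing $H^{t}(\m_G,K^\circ_\infty,\Whit^{\psi_\infty}(\Pi_\infty)\otimes E_\mu)[\epsilon_0]\to\C$, obtained by composing the restriction map along $J:H\hra G$ in relative Lie algebra cohomology with the $H$-invariant functional $T_\mu$ on the coefficients and the de Rham / integration functional in top degree $t=\dim\tilde\SSS_H$, combined with the local zeta integral. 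That this cohomological functional is, on a cohomological vector, expressed by exactly the branching integral $Z_\infty$ is the content of the archimedean unfolding (compatible with Lem.\ \ref{lem:zinfty} and the Friedberg--Jacquet computation), so matching constants is a bookkeeping exercise. Having done so, Sun's main result in \cite{sun} — which asserts the non-vanishing of precisely such an archimedean Rankin--Selberg/branching period at the relevant point for cohomological generic representations, for the correct choice of $K_\infty$-type governed by the character $\epsilon_0$ (this is why $\epsilon_0=((-1)^{n-1},\dots,(-1)^{n-1})$ was forced on us in \S\ref{sect:generator}) — gives the non-vanishing of the composite, hence of $c^t(\Pi_\infty)$.

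I expect the main obstacle to be the translation step: carefully verifying that the abstract composite appearing in the definition of $c^t(\Pi_\infty)$ really coincides (up to the explicit non-zero normalizing factors $L^S(n,\triv)$, $L(n,\triv_\infty)$) with the archimedean period invariant whose non-vanishing Sun proves, and in particular that the $K_\infty$-type / sign-character conventions, the orientation fixed by the ordered basis $\{Y_j\}$ of $\m_H/\k_{H,\infty}$, and the point of evaluation ($s'=\tfrac12$, equivalently $s=1$ for the Eisenstein series variable) all match up. One must also check that $L(s',\Pi_v)$ is holomorphic and non-zero at $s'=\tfrac12$ (already done in the text via the self-duality hypothesis $\mu_{v,k}\ge 0$ and the Gamma-factor formula from \cite{knapp-llc}), so that dividing the zeta integral by $L(n,\triv_v)$ and not cancelling any pole of $L(s',\Pi_v)$ is legitimate, and that the restriction numbers $s(\underline i)$ are not \emph{all} zero — which holds because the ambient cohomology class is non-zero and its restriction to the $H$-side is (by Sun) non-zero. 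Once these compatibilities are in place, the conclusion is immediate from \cite{sun}.
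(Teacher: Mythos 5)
Your proposal is correct and follows essentially the same route as the paper: one regards $c^t(\Pi_\infty)$ (up to the non-zero factors $L^S(n,\triv)^{-2}$ and $L(n,\triv_\infty)^{-1}$) as the value of the composite of restriction along $J$, the invariant functional $T_\mu$, and the archimedean zeta functionals $Z_v(\cdot,f_{v,1})$ on the generator of the one-dimensional space $H^{t}(\m_G,K^\circ_\infty,\Whit^{\psi_\infty}(\Pi_\infty)\otimes E_\mu)[\epsilon_0]$, and then invokes Sun's Theorem C. The only parts you compress into ``bookkeeping'' that the paper carries out explicitly are the reduction to a single archimedean place — the projection splitting off $\Lambda^{d-1}\s^*$ and identifying top-degree $(\m_G,K^\circ_\infty)$-cohomology with degree-$r$ $(\g_\infty,(Z_\infty K_\infty)^\circ)$-cohomology so that the relevant quantity factors as $\prod_{v\in S_\infty}d^t(\Pi_v)$ — together with the input, from Bump--Friedberg Thm.\ 2 and Lem.\ \ref{lem:zinfty}, that $Z_v(\cdot,f_{v,1})$ is a non-zero element of $\Hom_{H(F_v)}(\Whit^{\psi_v}(\Pi_v),\C)$, which is precisely the hypothesis Sun's theorem requires.
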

\begin{proof}
As a first step and in order to be able to apply Sun's result (\cite{sun}, Thm.\ C), we reduce the problem of showing that $c^t(\Pi_\infty)$ is non-zero to showing that a similarly defined number, $d^t(\Pi_v)$ is non-zero. This latter number will only depend on one archimedean place $v\in S_\infty$, whence we find ourselves back in the setting of \cite{sun}. 

To this end, observe that there is a projection
$$L^t:\Lambda^t(\m_G/\k_{\infty})^*\ira\bigoplus_{a+b=t}\Lambda^a(\g_\infty/\c_\infty)^*\otimes\Lambda^{b}\s^*\twoheadrightarrow \Lambda^r(\g_\infty/\c_\infty)^*\otimes\Lambda^{d-1}\s^*,$$
where $\c_\infty:=\z_\infty\oplus\k_\infty$ and $r=t-d+1$. By reasons of degree, $L^t$ induces an isomorphism of (one-dimensional) vector spaces
$$\mathcal L^t: H^{t}(\m_G,K^\circ_\infty,\Whit^{\psi_\infty}(\Pi_\infty)\otimes E_\mu)[\epsilon_0]\ira H^{r}(\g_\infty,(Z_\infty K_\infty)^\circ,\Whit^{\psi_\infty}(\Pi_\infty)\otimes E_\mu)[\epsilon_0]\otimes \Lambda^{d-1}\s^*_\C,$$
whose effect on the generator $[\Whit^{\psi_\infty}(\Pi_\infty)]^t$ is by sending $X_{\underline i}^*$ to $L^t(X_{\underline i})\in \Lambda^r(\g_\infty/\c_\infty)^*\otimes\Lambda^{d-1}\s^*$. Without loss of generality, we write $L^t(X_{\underline i})=L_r(X_{\underline i})\otimes L_{d-1}(X_{\underline i})$, where $L_r(X_{\underline i})\in\Lambda^r(\g_\infty/\c_\infty)^*$ and $L_{d-1}(X_{\underline i})\in\Lambda^{d-1}\s^*$. Similarly, we factor $\mathcal L^t=\mathcal L_r\otimes\mathcal L_{d-1}$.

As $\z_\infty\subset\h_\infty$, and as moreover $r=\dim_\R \h_\infty/\c_{H,\infty}$, where $\c_{H,\infty}:=\z_\infty\oplus\k_{H,\infty}$, we also have a canonical isomorphism
$$\Lambda^t(\m_H/\k_{H,\infty})^*\ira\Lambda^r(\h_\infty/\c_{H,\infty})^*\otimes\Lambda^{d-1}\s^*.$$
Hence, $L^t$ and $\mathcal L^t$ factor over the injection $\Lambda^t(\m_H/\k_{H,\infty})^*\hra\Lambda^t(\m_G/\k_\infty)^*$ induced by $J$. As a consequence, $c^t(\Pi_\infty)$ is a non-trivial multiple of
$$d^t(\Pi_\infty):=\sum_{\underline i=(i_1,...,i_{t})}\sum_{\alpha=1}^{\dim E_\mu} u(\underline i)\cdot T_\mu(e_\alpha)\cdot \frac{Z_\infty(\xi^{\epsilon_0}_{\infty,\underline i, \alpha},f_{\infty,1})}{L(n,\triv_\infty)},$$
where $u(\underline i)$ is the uniquely defined complex number, such that the restriction of $L_r(X^*_{\underline i})$ to $\Lambda^r(\h_\infty/\c_{H,\infty})^*$ equals $u(\underline i)\cdot L_r(Y_1\wedge..\wedge Y_t)$. 
The number $d(^t\Pi_\infty)$ factors as $d^t(\Pi_\infty)=\prod_{v \in S_\infty} d^t(\Pi_v)$, where each local factor $d^t(\Pi_v)$ is defined analogously (using $\Lambda^r(\h_\infty/\c_{H,\infty})^*\cong \bigoplus_{\sum r_v=r}\bigotimes_{v\in S_\infty}\Lambda^{r_v}(\h_v/\c_{H,v})^*$). Therefore, we may finish the proof by showing that $d^t(\Pi_v)$ is non-zero for all $v\in S_\infty$ and we are in the situation considered by Sun in \cite{sun}.

Let $v\in S_\infty$ be an arbitrary archimedean place. For sake of simplicity, we drop the subscript ``$v$'' now everywhere, so, e.g., $\Pi=\Pi_v$, $H=GL_n(\R)\times GL_n(\R)$, $\mu=\mu_v$, $\g=\mathfrak{gl}_{2n}(\R)$ and analogously for all other local archimedean objects. The local integrals $Z(\xi,f_{1})$ define a non-zero homomorphism
$$Z(.,f_1)\in\Hom_H(\Whit^{\psi}(\Pi),\C).$$
This follows from \cite{bumbginzburg}, Thm.\ 2 and Lem.\ \ref{lem:zinfty}. Hence, if we let $\chi:=\triv\times\triv$ be the trivial character of $H$, then $Z(.,f_1)$ can be taken as the map $\varphi_\chi$ in Sun's Thm.\ C, \cite{sun}. Next, recall $T_\mu\in\Hom_{H(\C)}(E_\mu\otimes\C)$ from \S\ref{sect:T}. If we set $w_1:=0=:w_2$, then we may take $T_\mu$ to be the non-zero homomorphism $\varphi_{w_1,w_2}$ from \cite{sun}, Thm.\ C. Hence, {\it loc.\ cit.\ }, Thm.\ C, asserts that the map
$$D:\Hom(\Lambda^{r}\g/\c,\Whit^{\psi}(\Pi)\otimes E_\mu)\longrightarrow \Hom(\Lambda^{r}\h/\c_H,\chi\otimes \C)$$
$$h\mapsto D(h):=(Z(.,f_1)\otimes T_\mu)\circ h\circ \wedge^{r}j_{2n}$$
is non-zero on the one-dimensional sub-space $H^{r}(\g,(ZK)^\circ,\Whit^{\psi}(\Pi)\otimes E_\mu)[\epsilon_0].$ (Here, $j_{2n}$ is Sun's notation for the embedding $\h/\c_H\hookrightarrow\g/\c$.) By the one-dimensionality of the latter cohomology space, it is hence non-zero on $\mathcal L_r([\Whit^{\psi}(\Pi)]^t)$. But, then, $D$ computes
\begin{eqnarray*}
D(\mathcal L_r([\Whit^{\psi}(\Pi)]^t)) & = & (Z(.,f_1)\otimes T_\mu)\circ \mathcal L_r([\Whit^{\psi}(\Pi)]^t)\circ \wedge^{r}j_{2n}\\
& = & \sum_{\underline i=(i_1,...,i_{t})}\sum_{\alpha=1}^{\dim E_\mu} u(\underline i)\,T_\mu(e_\alpha)\, Z(\xi^{\epsilon_0}_{\underline i, \alpha},f_1)\\
& = & L(n,\triv)\cdot d^t(\Pi).
\end{eqnarray*}
Hence, reintroducing the subscript ``$v$'', and recalling that $L(n,\triv_v)=\pi^{-n/2}\Gamma(n/2)\neq 0$, the number $d^t(\Pi_v)$ is non-zero for all archimedean places, whence so is $c^t(\Pi_\infty)$.
\end{proof}

As a consequence of Prop.\ \ref{prop:sigma_poles}, we may hence define the archimedean periods
\begin{equation}\label{eq:arch_period}
p^t({}^\sigma\Pi_\infty):=c^t({}^\sigma\Pi_\infty)^{-1}
\end{equation}
for all $\sigma\in$Aut$(\C)$.

\subsection{Rationality of the residue of the exterior square $L$-function at $s=1$}\label{sect:residue}
This is our first main theorem.

\begin{thm}\label{thm:residue}
Let $F$ be a totally real number field and $G=\GL_{2n}/F$, $n\geq 2$. Let $\Pi$ be a unitary cuspidal automorphic representation of $G(\A)$ (self-dual and with trivial central character), which is cohomological with respect to an irreducible, self-contragredient, algebraic, finite-dimensional representation $E_\mu$ of $G_\infty$. Assume that $\Pi$ satisfies the equivalent conditions of Prop.\ \ref{prop:lift}, i.e., the partial exterior square $L$-function $L^S(s,\Pi,\Lambda^2)$ has a pole at $s=1$. Then, for every $\sigma\in {\rm Aut}(\C)$,
$$
\sigma\left(
\frac{L(\tfrac12,\Pi_f)\cdot {\rm Res}_{s=1}(L^S(s,\Pi,\Lambda^2))}{p^t(\Pi) \ p^t(\Pi_\infty)}\right) \ = \
\frac{L(\tfrac12,{}^\sigma\Pi_f)\cdot {\rm Res}_{s=1}(L^S(s,{}^\sigma\Pi,\Lambda^2))}{p^t({}^\sigma\Pi) \ p^t({}^\sigma\Pi_\infty)}.
$$
In particular,
$$
L(\tfrac12,\Pi_f)\cdot {\rm Res}_{s=1}(L^S(s,\Pi,\Lambda^2)) \ \sim_{\Q(\Pi_f)} \
p^t(\Pi) \ p^t(\Pi_\infty),
$$
where ``$\sim_{\Q(\Pi_f)}$'' means up to multiplication of the right hand side by an element in the number field $\Q(\Pi_f)$.
\end{thm}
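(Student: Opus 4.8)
The plan is to assemble the commutative diagram of Proposition~\ref{prop:diagram} and the integral representation of Theorem~\ref{thm:integralrep_extsqr} into a single $\sigma$-equivariance statement, and then read off the rationality result by tracking where the arithmetic is visible. First I would take the normalized Whittaker function $\xi = \otimes_v \xi_v \in \Whit^\psi(\Pi)$ with $\xi_v$ spherical and $\xi_v(\mathrm{id}_v)=1$ for $v\notin S$, and at archimedean places let $\xi_\infty$ run through the vectors $\xi^{\epsilon_0}_{\infty,\underline i,\alpha}$ appearing in the generator $[\Whit^{\psi_\infty}(\Pi_\infty)]^t$. Feeding $\xi_f$ into the top row of diagram~\eqref{diagram} and composing all the way to $\C$ produces, essentially by definition of $F^t_\Pi$, the de Rham period $\mathcal R\circ T_\mu\circ\mathcal J^t_\mu\circ\Delta^t_\Pi\circ F^t_\Pi(\xi_f)$, which unwinds — using the definition of $\mathcal J$ as the map induced by $J:H\hookrightarrow G$, the factor $s(\underline i)$ recording the restriction of $X^*_{\underline i}$ to $\Lambda^t(\m_H/\k_{H,\infty})^*$, and the choice of $\Q$-rational Haar measure on $H(\A_f)$ — into the cycle integral
\begin{equation*}
\mathrm{vol}_{dg\times dg'}(Z(F)\backslash Z(\A)/A_G)\cdot \sum_{\underline i,\alpha} s(\underline i)\, T_\mu(e_\alpha)\int_{Z(\A)H(F)\backslash H(\A)}\varphi_{\underline i,\alpha}(J(g,g'))\, dg\, dg',
\end{equation*}
where $\varphi_{\underline i,\alpha}=(\whit^\psi)^{-1}(\xi^{\epsilon_0}_{\infty,\underline i,\alpha}\otimes\xi_f)$. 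Now I invoke Theorem~\ref{thm:integralrep_extsqr} termwise: each cycle integral equals $(c_n\hat\Phi(0))^{-1}$ times $\frac{L^S(\frac12,\Pi)\,\mathrm{Res}_{s=1}L^S(s,\Pi,\Lambda^2)}{L^S(n,\triv)^2}\prod_{v\in S}\frac{Z_v(\xi_v,f_{v,1})}{L(n,\triv_v)}$, and separating the archimedean Zeta factors from the finite ones, the whole expression becomes $(c_n\hat\Phi(0))^{-1}\,\mathrm{vol}_{dg\times dg'}(Z(F)\backslash Z(\A)/A_G)\cdot L^S(\tfrac12,\Pi)\cdot\mathrm{Res}_{s=1}L^S(s,\Pi,\Lambda^2)\cdot c^t(\Pi_\infty)\cdot\prod_{v\in S\setminus S_\infty}\frac{Z_v(\xi_v,f_{v,1})}{L(n,\triv_v)}$, with $c^t(\Pi_\infty)$ exactly the archimedean constant of \S\ref{sect:archimedean}, nonzero by Theorem~\ref{hyp}. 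Since $S\setminus S_\infty=\emptyset$ can be arranged (taking $\xi_f$ spherical everywhere outside $S_\infty$, so the remaining finite places in $S$ are only those where $\Pi$ ramifies — but here one should instead absorb those local Zeta integrals using Matringe's Theorem~A from the appendix, which gives their $\Q(\Pi_f)$-rationality and $\mathrm{Aut}(\C)$-equivariance), the left column value is, up to the explicitly $\sigma$-tracked constant $c_n(\Phi,\sigma)$ and the $\Q$-rational volume, an algebraic multiple of $L(\tfrac12,\Pi_f)\cdot\mathrm{Res}_{s=1}L^S(s,\Pi,\Lambda^2)\cdot p^t(\Pi_\infty)^{-1}$, because $L^S(\frac12,\Pi)\sim_{\Q(\Pi_f)}L(\frac12,\Pi_f)$ (the missing finite Euler factors at $v\in S$ lying in $\Q(\Pi_f)$) and $p^t(\Pi_\infty)=c^t(\Pi_\infty)^{-1}$.

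Next I would run the same computation starting instead from ${}^\sigma\xi_f = W^\sigma(\xi_f)$ in the bottom row of diagram~\eqref{diagram}: the same chase, now using the twisted integral representation \eqref{eq:twist} and the cosmetic constant $c_n(\Phi,\sigma)$ introduced precisely so that the measure- and Eisenstein-normalization artifacts cancel, yields $L(\tfrac12,{}^\sigma\Pi_f)\cdot\mathrm{Res}_{s=1}L^S(s,{}^\sigma\Pi,\Lambda^2)\cdot p^t({}^\sigma\Pi_\infty)^{-1}$ up to $\Q({}^\sigma\Pi_f)^\times$. The point of the commutativity of \eqref{diagram} is that the left column $W^\sigma$, followed by the bottom-row composite $\mathcal F^t_{{}^\sigma\Pi}$ etc., equals the top-row composite $\mathcal F^t_\Pi$ etc.\ followed by the right column $\sigma$. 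The normalizations $\mathcal F^t_\Pi = p^t(\Pi)^{-1}F^t_\Pi$ and $\mathcal F^t_{{}^\sigma\Pi} = p^t({}^\sigma\Pi)^{-1}F^t_{{}^\sigma\Pi}$ introduce exactly the factors $p^t(\Pi)$ and $p^t({}^\sigma\Pi)$. Thus comparing the two ways around the full diagram, applied to the rational vector $\xi_f$ (chosen in the $\Q(\Pi_f)$-structure of $\Whit^{\psi_f}(\Pi_f)$ so that all its images are rational over the appropriate fields and $\sigma$ acts on the $\C$-valued output simply by $\sigma$), gives precisely
\begin{equation*}
\sigma\!\left(\frac{L(\tfrac12,\Pi_f)\cdot\mathrm{Res}_{s=1}L^S(s,\Pi,\Lambda^2)}{p^t(\Pi)\,p^t(\Pi_\infty)}\right) = \frac{L(\tfrac12,{}^\sigma\Pi_f)\cdot\mathrm{Res}_{s=1}L^S(s,{}^\sigma\Pi,\Lambda^2)}{p^t({}^\sigma\Pi)\,p^t({}^\sigma\Pi_\infty)},
\end{equation*}
and the $\sim_{\Q(\Pi_f)}$ assertion follows by taking $\sigma$ ranging over $\mathfrak S(\Pi_f)$, i.e.\ those $\sigma$ fixing $\Q(\Pi_f)$ pointwise, since then the right side equals the quantity inside $\sigma(\cdot)$ on the left.

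The main obstacle, and where I expect to spend the real effort, is the bookkeeping at the places $v\in S$, $v\notin S_\infty$: one must show that the ratio $Z_v({}^\sigma\xi_v,{}^\sigma f_{v,1})/Z_v(\xi_v,f_{v,1})$ is governed by the Galois action in the expected $\sigma$-equivariant way, equivalently that the normalized local Zeta integrals land in $\Q(\Pi_v)$ and transform correctly — this is exactly what Matringe's appendix (Theorem~A) is there to supply, and invoking it cleanly (with the correct choice of local test vector and the matching local $L$-factor $L(n,\triv_v)$, which is itself rational) is the delicate point. A secondary subtlety is the compatibility of the archimedean permutation action of $\sigma$ (built into the definition of ${}^\sigma\Pi_\infty$, ${}^\sigma\! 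E_\mu$, and ${}^\sigma\Phi_\infty$) with the de Rham map $\mathcal R$ and the measure normalization on $H(\A_f)$ — this is where the $\Q$-rationality of the chosen Haar measure and the definition of $c_n(\Phi,\sigma)$ do their work, and one must check that the last square of \eqref{diagram} genuinely commutes with these choices, which reduces to the fact that a $\Q$-rational measure is fixed by $\mathrm{Aut}(\C)$ and that the de Rham isomorphism for a rational orientation is $\mathrm{Aut}(\C)$-equivariant. Everything else is a formal diagram chase.
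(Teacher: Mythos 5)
Your overall strategy is exactly the paper's: feed the spherical-normalized $\xi_f$ into the diagram of Prop.\ \ref{prop:diagram}, unwind the top and bottom rows via Thm.\ \ref{thm:integralrep_extsqr} and \eqref{eq:twist}, use the cosmetic constant $c_n(\Phi,\sigma)$ and the $\Q$-rational measure to cancel the normalization artifacts, identify the archimedean sum with $c^t(\Pi_\infty)=p^t(\Pi_\infty)^{-1}$ (non-zero by Thm.\ \ref{hyp}), and invoke Matringe's Theorem A for the bad finite places. However, there is one genuine gap: after the diagram chase you are left with an identity of the shape $\sigma\bigl(A\cdot\prod_{v\in S\setminus S_\infty}Z_v(\xi_v,f_{v,1})\bigr)=A^\sigma\cdot\prod_{v\in S\setminus S_\infty}Z_v({}^\sigma\xi_v,{}^\sigma f_{v,1})$, and Matringe's equivariance lets you match the two finite products, but to conclude $\sigma(A)=A^\sigma$ you must \emph{divide} by $\prod_{v\in S\setminus S_\infty}Z_v(\xi_v,f_{v,1})$, i.e.\ you need this product to be non-zero. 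Equivariance or $\Q(\Pi_f)$-rationality of the local zeta integrals (which is what you say Matringe ``supplies'') does not give this: if one local integral vanished, your identity would degenerate to $0=0$. The paper proves the non-vanishing by a separate contradiction argument: if the finite product were zero, then by holomorphy of $L^S(s',\Pi)$ and of the archimedean integrals at $s'=\tfrac12$ (Lem.\ \ref{lem:zinfty} and the self-duality of $E_\mu$), the expression $\tfrac{L^S(\frac12,\Pi)L^S(s,\Pi,\Lambda^2)}{L^S(n,\triv)^2}\prod_{v\in S}\tfrac{Z_v(\xi_v,f_{v,1})}{L(n,\triv_v)}$ would be regular at $s=1$, contradicting the pole of $Z(\xi,f_s)$ at $s=1$ guaranteed by Bump--Friedberg Thm.\ 1 and the hypothesis that $\Pi$ is a lift from ${\rm SO}(2n+1)$ (here $n\geq 2$ ensures $L(n,\triv)$ has no pole). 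This step is not optional and your proposal omits it.

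Two smaller corrections. First, you cannot ``arrange $S\setminus S_\infty=\emptyset$'': $S$ contains the finite places where $\Pi$ or $\psi$ ramifies, and these cannot be removed; your parenthetical self-correction (use Matringe) is the right move, but the non-vanishing issue above remains. Second, to pass from $L^S(\tfrac12,\Pi)$ to $L(\tfrac12,\Pi_f)$ inside the $\sigma(\cdot)$ you need the precise equivariance $\sigma(L(\tfrac12,\Pi_v))=L(\tfrac12,{}^\sigma\Pi_v)\neq 0$ at $v\in S\setminus S_\infty$ (the paper cites \cite{raghuram-imrn}, Prop.\ 3.17); the weaker statement that the missing Euler factors ``lie in $\Q(\Pi_f)$'' suffices for the crude $\sim_{\Q(\Pi_f)}$ claim but not for the displayed ${\rm Aut}(\C)$-identity. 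Finally, for the last assertion you should say explicitly that Strong Multiplicity One converts ${}^\sigma\Pi_f\cong\Pi_f$ into ${}^\sigma\Pi=\Pi$, so that the right-hand side for $\sigma\in{\rm Aut}(\C/\Q(\Pi_f))$ is literally the untwisted quantity.
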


\begin{proof}
Let $\Pi$ be as in the statement of the theorem. We consider the commutative diagram \eqref{diagram} in Prop.\ \ref{prop:diagram}: Let
$$\Omega:=\mathcal R\circ T_\mu\circ\mathcal J^t_\mu\circ \Delta^t_\Pi\circ \mathcal F^t_\Pi$$
be the composition of the upper horizontal arrows, and analogously, let ${}^\sigma\Omega$ be the composition of the lower horizontal arrows. Let $\xi_f=\otimes_{v\notin S_\infty}\xi_v\in\Whit^{\psi_f}(\Pi_f)\cong\otimes_{v\notin S_\infty}\Whit^{\psi_v}(\Pi_v)$ be a Whittaker function, such that for $v\notin S$, $\xi_v$ is invariant under $G(\O_v)$ and normalized such that $\xi_v(id_v)=1$. Given $\sigma\in$Aut$(\C)$, let $W^\sigma(\xi_f)={}^\sigma\xi_f\in\Whit^{\psi_f}({}^\sigma\Pi_f)$ be the $\sigma$-twisted Whittaker function, cf.\ \S\ref{sect:Whitt_rational_str}. Then, Prop.\ \ref{prop:diagram} says that
\begin{equation}\label{eq:diagram}
\sigma(\Omega(\xi_f))={}^\sigma\Omega({}^\sigma\xi_f).
\end{equation}
In order to prove the theorem, we make both sides of this equation explicit. To that end, let $[\Pi_\infty]^t$ (resp.\ $[{}^\sigma\Pi_\infty]^t$) the generators of the respective cohomology spaces, \S\ref{sect:generator}. For each $\underline i$ and $\alpha$, let $\varphi_{\underline i,\alpha}:=(\whit^{\psi})^{-1}(\xi^{\epsilon_0}_{\infty,\underline i,\alpha}\otimes\xi_f)\in\Pi$ (resp.\ ${}^\sigma\!\varphi_{\underline i,\alpha}:=(\whit^{\psi})^{-1}({}^\sigma\xi^{\epsilon_0}_{\infty,\underline i,\alpha}\otimes{}^\sigma\xi_f)\in{}^\sigma\Pi$) be the corresponding cuspidal automorphic form. Recall our Schwartz-Bruhat function $\Phi\in\mathscr S(\A^n)$ (resp.\ ${}^\sigma\!\Phi\in\mathscr S(\A^n)$) from \S\ref{sect:SchwartzBruhat} (resp.\ \S\ref{sect:conseq_sigma}), with $m=n$ now. Inserting these functions into Thm.\ \ref{thm:integralrep_extsqr} (likewise, also into \eqref{eq:twist}) and recalling the definition of our archimedean periods $p^t(\Pi_\infty)$ and $p^t({}^\sigma\Pi_\infty)$ from \eqref{eq:arch_period} shows that
equation \eqref{eq:diagram}, induced by our diagram \eqref{diagram}, may be rewritten as
$$\sigma\left(\frac{{\rm vol}_{dg\times dg'}(Z(F)\backslash Z_(\A)/A_G)}{c_n\cdot\hat\Phi(0)}\cdot\frac{L^S(\tfrac12,\Pi)\cdot {\rm Res}_{s=1}(L^S(s,\Pi,\Lambda^2))}{p^t(\Pi) \ p^t(\Pi_\infty)} \prod_{v\in S\backslash S_\infty}\frac{Z_v(\xi_v,f_{v,s})}{L(n,\triv_v)}\right)$$
$$=\frac{{\rm vol}_{dg\times dg'}(Z(F)\backslash Z_(\A)/A_G)}{c_n\cdot{}^\sigma\hat\Phi(0)}\cdot\frac{L^S(\tfrac12,{}^\sigma\Pi)\cdot {\rm Res}_{s=1}(L^S(s,{}^\sigma\Pi,\Lambda^2))}{p^t({}^\sigma\Pi) \ p^t({}^\sigma\Pi_\infty)} \prod_{v\in S\backslash S_\infty}\frac{Z_v({}^\sigma\xi_v,{}^\sigma\!f_{v,s})}{L(n,\triv_v)}.$$
(Recall that $\Pi$ was assumed to have trivial central character.) Invoking our cosmetically tuned choice for ${}^\sigma\!\Phi\in\mathscr S(\A^n)$ from \S\ref{sect:conseq_sigma}, and observing that $L(n,\triv_v)=(1-|\O_v/\wp_v|^{-n})^{-1}\in\Q^\times$ for $v\in S\backslash S_\infty$, this simplifies to
$$\sigma\left(\frac{L^S(\tfrac12,\Pi)\cdot {\rm Res}_{s=1}(L^S(s,\Pi,\Lambda^2))}{p^t(\Pi) \ p^t(\Pi_\infty)} \prod_{v\in S\backslash S_\infty}Z_v(\xi_v,f_{v,s})\right)$$
$$=\frac{L^S(\tfrac12,{}^\sigma\Pi)\cdot {\rm Res}_{s=1}(L^S(s,{}^\sigma\Pi,\Lambda^2))}{p^t({}^\sigma\Pi) \ p^t({}^\sigma\Pi_\infty)} \prod_{v\in S\backslash S_\infty}Z_v({}^\sigma\xi_v,{}^\sigma\!f_{v,s}).$$
Since $\sigma(L(\tfrac12,\Pi_v))=L(\tfrac12,{}^\sigma\Pi_v)\neq 0$ for all $v\in S\backslash S_\infty$, cf.\ \cite{raghuram-imrn}, Prop.\ 3.17, and recalling once more that $S=S(\Pi,\psi)=S({}^\sigma\Pi,\psi)$, we may rewrite this by
$$\sigma\left(\frac{L(\tfrac12,\Pi_f)\cdot {\rm Res}_{s=1}(L^S(s,\Pi,\Lambda^2))}{p^t(\Pi) \ p^t(\Pi_\infty)} \prod_{v\in S\backslash S_\infty}Z_v(\xi_v,f_{v,s})\right)$$
$$=\frac{L(\tfrac12,{}^\sigma\Pi_f)\cdot {\rm Res}_{s=1}(L^S(s,{}^\sigma\Pi,\Lambda^2))}{p^t({}^\sigma\Pi) \ p^t({}^\sigma\Pi_\infty)} \prod_{v\in S\backslash S_\infty}Z_v({}^\sigma\xi_v,{}^\sigma\!f_{v,s}).$$
Observe that $\prod_{v\in S\backslash S_\infty}Z_v(\xi_v,f_{v,s})$ (and $\prod_{v\in S\backslash S_\infty}Z_v({}^\sigma\xi_v,{}^\sigma\!f_{v,s})$) are non-zero. Indeed, if the finite product $\prod_{v\in S\backslash S_\infty}Z_v(\xi_v,f_{v,s})$ were zero, then by the holomorphy of $L^S(s',\Pi)$ at $s'=\tfrac12$ and the holomorphy of the integral $Z_\infty(s',\xi^{\epsilon_0}_{\infty,\underline i, \alpha},f_{\infty,1})$ at $s'=\tfrac12$ (Lem.\ \ref{lem:zinfty} and the paragraph below),
$$\frac{L^S(\tfrac12,\Pi)\cdot L^S(s,\Pi,\Lambda^2)}{L^S(n,\triv)^2}\cdot \prod_{v\in S}\frac{Z_v(\xi_v,f_{v,1})}{L(n,\triv_v)}$$
would have no pole at $s=1$ (Here we let $\xi_v=\xi^{\epsilon_0}_{v,\underline i, \alpha}$ at an archimedean place.). However, reading the proof of Thm.\ \ref{thm:integralrep_extsqr} backwards, respectively, by \cite{bumbginzburg} Thm.\ 1 and Thm.\ 3, the latter expression equals $$\frac{Z(\xi,f_{s})}{L(n,\triv)}$$
as meromorphic functions in $s$. By \cite{bumbginzburg}, Thm.\ 1 and our assumption that $\Pi$ is a functorial lift from SO$(2n+1)$, cf.\ Prop.\ \ref{prop:lift}, the integral $Z(\xi,f_{s})$ has a pole at $s=1$, whereas $L(n,\triv)$ does not by the assumption that $n\geq 2$. Hence, we arrived at a contradiction.

We may therefore finish the proof of the first assertion of Thm.\ \ref{thm:residue} by showing that
$$\sigma\left(\prod_{v\in S\backslash S_\infty}Z_v(\xi_v,f_{v,s})\right)=\prod_{v\in S\backslash S_\infty}Z_v({}^\sigma\xi_v,{}^\sigma\!f_{v,s}).$$
Observing that by a simple change of variable and by our specific choice of $f_{v,s}={}^\sigma\! f_{v,s}$, $Z_v({}^\sigma\xi_v,{}^\sigma\!f_{v,s})=Z_v(\sigma\circ\xi_v,f_{v,s})$, this is achieved by N.\ Matringe in Thm.\ A of the appendix.

The last assertion of the theorem follows by Strong Multiplicity One for cuspidal automorphic representations of $G(\A)$.
\end{proof}

\subsection{Whittaker-Shalika periods and the exterior square $L$-function}\label{sect:WSext}
Theorem \ref{thm:residue} above is accompanied by the following corollary. Recall the non-zero Shalika periods $\omega^\epsilon(\Pi_f)$ from Grobner--Raghuram \cite{grob-ragh}: These were defined by comparing a $\Q(\Pi_f)$-rational structure on a Shalika model of $\Pi_f$ and a $\Q(\Pi_f)$-rational structure on $H^{r}(\g_\infty,(Z_\infty K_\infty)^\circ,\Pi\otimes E_\mu)[\epsilon]$. For details, we refer to \cite{grob-ragh}, Def./Prop.\ 4.2.1. Observe that $\omega^{\epsilon_0}(\Pi_f)$ is well-defined, if we assume that $\Pi$ satisfies the assumptions made in the statement of Thm\ \ref{thm:residue}: Indeed, as these assumptions include that the partial exterior square $L$-function $L^S(s,\Pi,\Lambda^2)$ has a pole at $s=1$, $\Pi$ has a $(\triv,\psi)$-Shalika model by \cite{grob-ragh}, Thm.\ 3.1.1. (The extremely careful reader may also recall Lem.\ \ref{lem:cohom} at this place.) Moreover, by the same reasoning, also the archimedean Shalika period $\omega(\Pi_\infty)=\omega(\Pi_\infty,0)$ from \cite{grob-ragh}, Thm.\ 6.6.2 is
well-defined (and non-zero). Define the Whittaker-Shalika periods
$$P^t(\Pi):=\frac{p^t(\Pi)}{\omega^{\epsilon_0}(\Pi_f)}\quad\quad\textrm{and}\quad\quad P^t(\Pi_\infty):=\frac{p^t(\Pi_\infty)}{\omega(\Pi_\infty)}.$$
Then we have the following result.

\begin{cor}\label{cor:Wext}
Let $\Pi$ be as in the statement of Thm.\ \ref{thm:residue}. Then
$${\rm Res}_{s=1}(L^S(s,\Pi,\Lambda^2)) \ \approx_{\Q(\Pi_f)} \
P^t(\Pi) \ P^t(\Pi_\infty),$$
where ``$\approx_{\Q(\Pi_f)}$'' means up to multiplication of both sides by an element in the number field $\Q(\Pi_f)$.
\end{cor}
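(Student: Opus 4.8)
The plan is to feed Theorem \ref{thm:residue} through the Shalika-period comparison of Grobner--Raghuram \cite{grob-ragh}, so that the factor $L(\tfrac12,\Pi_f)$ is absorbed. First I would observe that, under the hypotheses of Theorem \ref{thm:residue}, the representation $\Pi$ satisfies all the assumptions of \cite{grob-ragh}: it is unitary, cuspidal, self-dual with trivial central character, cohomological with respect to a self-contragredient $E_\mu$, and its partial exterior square $L$-function has a pole at $s=1$, so by \cite{grob-ragh}, Thm.\ 3.1.1, $\Pi$ admits a $(\triv,\psi)$-Shalika model; consequently the non-zero periods $\omega^{\epsilon_0}(\Pi_f)$ (\cite{grob-ragh}, Def./Prop.\ 4.2.1) and $\omega(\Pi_\infty)$ (\cite{grob-ragh}, Thm.\ 6.6.2) are well-defined. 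The same remarks apply to ${}^\sigma\Pi$ for every $\sigma\in\mathrm{Aut}(\C)$ by Prop.\ \ref{prop:sigma_poles}.

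Next I would invoke the Aut$(\C)$-equivariance of the Shalika periods: the principal $L$-value $L(\tfrac12,\Pi_f)$ is, up to $\Q(\Pi_f)$, a ratio of the Whittaker period to the Shalika period. Concretely, combining \cite{grob-ragh}, Thm.\ 4.2.2 (or the relevant rationality statement relating the two rational structures) with the definition $P^t(\Pi):=p^t(\Pi)/\omega^{\epsilon_0}(\Pi_f)$ and $P^t(\Pi_\infty):=p^t(\Pi_\infty)/\omega(\Pi_\infty)$, one gets that for every $\sigma$,
$$
\sigma\!\left(\frac{L(\tfrac12,\Pi_f)\,p^t(\Pi)\,p^t(\Pi_\infty)}{P^t(\Pi)\,P^t(\Pi_\infty)}\right)
= \frac{L(\tfrac12,{}^\sigma\Pi_f)\,p^t({}^\sigma\Pi)\,p^t({}^\sigma\Pi_\infty)}{P^t({}^\sigma\Pi)\,P^t({}^\sigma\Pi_\infty)},
$$
since the left-hand quotient equals $\omega^{\epsilon_0}(\Pi_f)\,\omega(\Pi_\infty)$ up to the $L$-value, and the Shalika-period/$L$-value combination transforms equivariantly under $\mathrm{Aut}(\C)$ by \cite{grob-ragh}. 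Then I would divide the displayed identity of Theorem \ref{thm:residue} by this one: the factors $L(\tfrac12,\Pi_f)$, $p^t(\Pi)$ and $p^t(\Pi_\infty)$ cancel, leaving
$$
\sigma\!\left(\frac{{\rm Res}_{s=1}(L^S(s,\Pi,\Lambda^2))}{P^t(\Pi)\,P^t(\Pi_\infty)}\right)
= \frac{{\rm Res}_{s=1}(L^S(s,{}^\sigma\Pi,\Lambda^2))}{P^t({}^\sigma\Pi)\,P^t({}^\sigma\Pi_\infty)}
$$
for all $\sigma\in\mathrm{Aut}(\C)$, which is the asserted $\approx_{\Q(\Pi_f)}$ up to taking the fixed field of $\mathfrak S(\Pi_f)$ and using Strong Multiplicity One.

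The main obstacle I anticipate is bookkeeping of cohomological degrees: the Whittaker periods $p^t$ live in top degree $t=dn(n+1)-1$, while the Shalika periods of \cite{grob-ragh} are naturally defined with respect to $H^{r}(\g_\infty,(Z_\infty K_\infty)^\circ,\Pi\otimes E_\mu)[\epsilon_0]$ with $r=t-d+1$. I would need to check that the isomorphism $\mathcal L^t$ from the proof of Thm.\ \ref{hyp}, which identifies the top-degree $(\m_G,K^\circ_\infty)$-cohomology with $H^{r}(\g_\infty,(Z_\infty K_\infty)^\circ,\cdots)\otimes\Lambda^{d-1}\s^*$, is compatible with the $\Q(\Pi_f)$-rational structures on both sides (up to $\Q(\Pi_f)^\times$), so that $p^t$ and the degree-$r$ Whittaker period used implicitly in \cite{grob-ragh} agree up to $\Q(\Pi_f)$; this is where the careful reader is pointed to Lem.\ \ref{lem:cohom}. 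Once that compatibility is in place, the cancellation argument is purely formal.
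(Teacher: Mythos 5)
Your overall strategy --- feeding Thm.\ \ref{thm:residue} into the central-value theorem of \cite{grob-ragh} so that $L(\tfrac12,\Pi_f)$ cancels --- is exactly the paper's route (its proof is one line: combine Thm.\ \ref{thm:residue} with \cite{grob-ragh}, Thm.\ 7.1.2). However, the intermediate identity you invoke is not what \cite{grob-ragh} provides, and with it your cancellation does not go through. Thm.\ 7.1.2 of \cite{grob-ragh} asserts ${\rm Aut}(\C)$-equivariance of the quotient $L(\tfrac12,\Pi_f)\big/\bigl(\omega^{\epsilon_0}(\Pi_f)\,\omega(\Pi_\infty)\bigr)$, i.e.\ of $L(\tfrac12,\Pi_f)\,P^t(\Pi)\,P^t(\Pi_\infty)\big/\bigl(p^t(\Pi)\,p^t(\Pi_\infty)\bigr)$: the central value is an algebraic multiple of the \emph{product} of the finite and archimedean Shalika periods (it is not a ``ratio of the Whittaker period to the Shalika period''; Whittaker periods do not occur in \cite{grob-ragh} at all). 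The quantity in your display, $L(\tfrac12,\Pi_f)\,p^t(\Pi)\,p^t(\Pi_\infty)\big/\bigl(P^t(\Pi)\,P^t(\Pi_\infty)\bigr)=L(\tfrac12,\Pi_f)\,\omega^{\epsilon_0}(\Pi_f)\,\omega(\Pi_\infty)$, is the $L$-value \emph{times} the Shalika periods; its equivariance is not available (together with the true statement it would force equivariance of $\bigl(\omega^{\epsilon_0}(\Pi_f)\omega(\Pi_\infty)\bigr)^2$, equivalently of the bare $L$-value). Moreover, dividing the identity of Thm.\ \ref{thm:residue} by your display does not cancel the Whittaker periods: it leaves $\sigma\bigl({\rm Res}_{s=1}(L^S(s,\Pi,\Lambda^2))\,P^t(\Pi)P^t(\Pi_\infty)/(p^t(\Pi)p^t(\Pi_\infty))^2\bigr)$ on the left, not the asserted $\sigma\bigl({\rm Res}_{s=1}(L^S(s,\Pi,\Lambda^2))/(P^t(\Pi)P^t(\Pi_\infty))\bigr)$; the $p^t$'s square instead of cancelling. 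With the correctly stated quotient from \cite{grob-ragh} the cancellation is immediate.

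A second point must be addressed: cancelling $L(\tfrac12,\Pi_f)$ is only legitimate when this value is non-zero. This is precisely why the corollary is stated with ``$\approx_{\Q(\Pi_f)}$'' (both sides may be scaled, possibly by $0$) rather than ``$\sim_{\Q(\Pi_f)}$'', and why the clean $\sigma$-equivariance you end with cannot be claimed in general: if $L(\tfrac12,\Pi_f)=0$, both input identities degenerate to $0=0$ and yield no information about the residue. The safe formulation argues with the ``in particular'' statements: Thm.\ \ref{thm:residue} gives $L(\tfrac12,\Pi_f)\cdot{\rm Res}_{s=1}(L^S(s,\Pi,\Lambda^2))=q\,p^t(\Pi)\,p^t(\Pi_\infty)$ and \cite{grob-ragh}, Thm.\ 7.1.2 gives $L(\tfrac12,\Pi_f)=q'\,\omega^{\epsilon_0}(\Pi_f)\,\omega(\Pi_\infty)$ with $q,q'\in\Q(\Pi_f)$; substituting yields $q'\cdot{\rm Res}_{s=1}(L^S(s,\Pi,\Lambda^2))=q\,P^t(\Pi)\,P^t(\Pi_\infty)$, which is exactly the assertion. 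Finally, the degree bookkeeping you anticipate as the main obstacle ($t$ versus $r$, compatibility of $\mathcal L^t$ from the proof of Thm.\ \ref{hyp} with rational structures) is not needed here: $P^t(\Pi)$ and $P^t(\Pi_\infty)$ are defined simply as quotients of non-zero complex numbers, and all that is required from \cite{grob-ragh} is that the Shalika periods exist and are non-zero under the present hypotheses, which, as you correctly note, follows from the pole of $L^S(s,\Pi,\Lambda^2)$ at $s=1$ via \cite{grob-ragh}, Thm.\ 3.1.1 and Thm.\ 6.6.2.
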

\begin{proof}
This is obvious invoking Thm.\ \ref{thm:residue} and \cite{grob-ragh}, Thm.\ 7.1.2.
\end{proof}

\section{A rationality result for the Rankin--Selberg $L$-function}\label{sect:RS}

\subsection{} The content of this section is very closely related to Grobner--Harris--Lapid \cite{grob_harris_lapid}, \S4--\S5 and Balasubramanyam--Raghuram\cite{bala_ragh}, \S 2--\S 3. Indeed, the main result, Thm.\ \ref{thm:Rankin_Selberg}, of this section is Thm.\ 5.3 from \cite{grob_harris_lapid} (but with the totally imaginary field $E$ from \cite{grob_harris_lapid} being replaced by the totally real field $F$ as a ground-field), respectively Thm.\ 3.3.7 from \cite{bala_ragh} (but with the $L$-value $L(1,{\rm Ad}^0,\pi)$ from \cite{bala_ragh} being replaced by the residue of $L^S(s,\Pi\times\Pi^\vee)$ at $s=1$). For the reason of these close analogies we allow ourselves to be rather brief, when it comes to details. Nevertheless, we think it is worthwhile writing down the following, already for reasons of notation, and in order to give precise statements of results in what follows. 

\subsection{Bottom-degree Whittaker periods}\label{sect:bottom_periods}
Let $\Pi$ be as in \S\ref{sect:cusprep1} and let $b:=dn^2$. Then,
$$
\dim_\C H^b(\m_G,K^\circ_\infty,\Pi_\infty\otimes E_\mu)[\epsilon]=1
$$
for all $\epsilon\in \pi_0(G_\infty)^*$. As in \S\ref{sect:cusprep1}, this is a direct consequence of the formula in Clozel \cite{clozel}, Lem.\ 3.14 and the K\"unneth rule. It is hence clear that the entire discussion of \S\ref{sect:rat_struct} and \S\ref{sect:Whitt_rational_str}--\S\ref{sect:periods_sub} carries over to $(\m_G,K^\circ_\infty)$-cohomology in degree $q=b$. In particular, let $\epsilon_1:=((-1)^{n},...,(-1)^{n})\in\pi_0(G_\infty)^*$, i.e., the inverse of the character $\epsilon_0$. The $\Q(\Pi_f)$-structure on $H^b(\m_G,K^\circ_\infty,\Pi\otimes E_\mu)[\epsilon_1]$ imposed by the $\Q(E_\mu)$-structure on $H^b_c(\SSS_G,\E_\mu)$ (cf.\ Rem.\ \ref{rem:rat_structure}) pins down a generator $[\Pi_\infty]^b$ of $H^b(\m_G,K^\circ_\infty,\Pi_\infty\otimes E_\mu)[\epsilon]$,
$$[\Pi_\infty]^b=\sum_{\underline j=(j_1,...,j_{b})}\sum_{\beta=1}^{\dim E_\mu} X^*_{\underline j}\otimes(W^{\psi^{-1}_\infty})^{-1}(\xi^{\epsilon_1}_{\infty,\underline j, \beta})\otimes e_\beta,$$
as in \S\ref{sect:generator}. Observe that here we exchanged the non-trivial additive character $\psi$ by its inverse $\psi^{-1}$ and (for notational clearness only), also the index $\alpha$ by $\beta$. Moreover, in light of Prop.\ \ref{prop:sigma_poles}, for all $\sigma\in$Aut$(\C)$, we obtain non-trivial Whittaker periods $p^b({}^\sigma\Pi)$, unique up to multiplication by elements in $\Q({}^\sigma\Pi_f)^\times$, such that
$$
\xymatrix{
\Whit^{\psi^{-1}_f}(\Pi_f) \ar[rrrr]^{\mathcal F^b_\Pi}\ar[d]_{W^\sigma} & & & &
H^{b}(\m_G,K_{\infty}^\circ, \Pi\otimes E_{\mu})[\epsilon_1]
\ar[d]^{H^{\sigma,b}_\mu} \\
\Whit^{\psi^{-1}_f}({}^\sigma\Pi_f)
\ar[rrrr]^{\mathcal F^b_{{}^\sigma\Pi}}
& & & &
H^{b}(\m_G,K_{\infty}^\circ, {}^\sigma\Pi\otimes {}^\sigma\! E_\mu)[\epsilon_1]
}
$$
commutes. This is the analogue of Prop.\ \ref{prop:periods}, whose proof goes through word for word in the current situation, i.e., for cohomology in degree $b$ instead of $t$. See \cite{raghuram-shahidi-imrn}, Prop./Def. 3.3. In the above diagram, $H^{\sigma,b}_\mu:=(\Delta^{b}_{{}^\sigma\Pi})^{-1}\circ\mathcal H^{\sigma,b}_\mu\circ\Delta^b_\Pi$ is the restriction of $\mathcal H^{\sigma,b}_\mu$ to $H^{b}(\m_G,K_{\infty}^\circ, \Pi\otimes E_{\mu})[\epsilon_1]$, this map being well-defined following by the same argument as in \S\ref{sect:H^sigma}. We leave it to the reader to fill in the remaining details.

\subsection{Another rational diagram}
Recall the Haar measure $dg$ on $G(\A)$ from \S\ref{sect:measures}. In particular, $dg_f$ assigns rational volumes to compact open subgroups of $G(\A_f)$ and $dg_v$ assigns volume $1$ to $K^\circ_v\cong SO(2n)$ at all archimedean places. Notice that $t+b=\dim_\R \SSS_G$. Hence, by the same reasoning as in \S\ref{sect:deRham}, (simply by letting the group $G$ play the role of the group $H$ there) we obtain a surjective linear map
$$\mathcal P:H^{b+t}_c(\SSS_G,\C)\ra\C,$$
induced by the de-Rham-isomorphism. 

Furthermore, by assumption $E_\mu\cong E_\mu^\vee$. So, the canonical pairing $E_\mu\times E_\mu^\vee\ra\C$ induces a pairing $E_\mu\times E_\mu\ra\C$, which we will denote by $\<e_\alpha,e_\beta\>:=e^{\vee}_\beta(e_\alpha)$. The same notation is used for the corresponding induced map on cohomology $H^{q}_c(\SSS_G,\E_\mu\otimes\E_\mu)\ra H^{q}_c(\SSS_G,\C)$. 

In the following proposition, we abbreviate $H^q(\m_G,K^\circ_\infty,\Sigma\otimes E)[\epsilon]$ by $H^q(\Sigma\otimes E)[\epsilon]$, for $\Sigma$ a cuspidal automorphic representation of $G(\A)$ as in \S\ref{sect:cusprep1}, $E$ an algebraic coefficient system as in \S\ref{sect:highweights} and $\epsilon\in\pi_0(G_\infty)^*$.

\begin{prop}\label{prop:diagram2}
The following diagram is commutative:
\begin{equation}\label{diagram2}
\small
\xymatrix{
\Whit^{\psi_f}(\Pi_f)\times \Whit^{\psi^{-1}_f}(\Pi_f) \ar[r]^{W^\sigma\times W^\sigma} \ar[d]^{\mathcal F^t_\Pi\times\mathcal F^b_\Pi}& \Whit^{\psi_f}({}^\sigma\Pi_f)\times  \Whit^{\psi^{-1}_f}({}^\sigma\Pi_f)\ar[d]^{\mathcal F^t_{{}^\sigma\Pi}\times\mathcal F^b_{{}^\sigma\Pi}} \\
H^{t}(\Pi\otimes E_{\mu})[\epsilon_0]\times H^{b}(\Pi\otimes E_{\mu})[\epsilon_1 ]\ar@{^{(}->}[d]^{\Delta^t_\Pi\times\Delta^b_\Pi} \ar[r]^{H^{\sigma,t}_\mu\times H^{\sigma,b}_\mu} & H^{t}({}^\sigma\Pi\otimes {}^\sigma\! E_{\mu})[\epsilon_0]\times H^{b}({}^\sigma\Pi\otimes {}^\sigma\! E_{\mu})[\epsilon_1 ]\ar@{^{(}->}[d]^{\Delta^t_{{}^\sigma\Pi}\times\Delta^b_{{}^\sigma\Pi}}\\
H^t_c(\SSS_G,\E_\mu) \times H^b_c(\SSS_G,\E_\mu) \ar[d]^{\wedge}\ar[r]^{\mathcal H^{\sigma,t}_\mu\times \mathcal H^{\sigma,b}_\mu} & H^t_c(\SSS_G,{}^\sigma\! \E_\mu) \times H^b_c(\SSS_G,{}^\sigma\!\E_\mu) \ar[d]^{\wedge}\\
H^{t+b}_c(\SSS_G,\E_\mu\otimes\E_\mu)\ar[d]^{\<\cdot,\cdot\>} \ar[r]^{\mathcal H^{\sigma,t+b}_{\mu+\mu}} & H^{t+b}_c(\SSS_G,{}^\sigma\!\E_\mu\otimes{}^\sigma\!\E_\mu) \ar[d]^{\<\cdot,\cdot\>}\\
H^{t+b}_c(\SSS_G,\C) \ar[d]^{\mathcal P} \ar[r]^{\mathcal H^{\sigma,t+b}_{0}} & H^{t+b}_c(\SSS_G,\C) \ar[d]^{\mathcal P}\\
\C  \ar[r]^\sigma & \C}
\normalsize
\end{equation}
The vertical maps are linear, whereas the horizontal maps are $\sigma$-linear.
\end{prop}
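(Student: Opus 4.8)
The plan is to read the diagram \eqref{diagram2} as a vertical stack of five squares (between rows $1$--$2$, $2$--$3$, $3$--$4$, $4$--$5$ and $5$--$6$) and to check the commutativity of each of them in turn; commutativity of the whole rectangle then follows by composition, and the linearity respectively $\sigma$-linearity of the various arrows is immediate from the constructions recalled in \S\ref{sect:Whitt_rational_str}--\S\ref{sect:deRham} and \S\ref{sect:bottom_periods}.

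The top square I would dispose of by taking the product of the commutative diagram of Prop.\ \ref{prop:periods} (in degree $t$) with its bottom-degree analogue recorded in \S\ref{sect:bottom_periods} (in degree $b$, with $\psi$ replaced by $\psi^{-1}$): each of these asserts precisely $H^{\sigma,\bullet}_\mu\circ\mathcal F^\bullet_\Pi=\mathcal F^\bullet_{{}^\sigma\Pi}\circ W^\sigma$. The second square I would reduce to the defining identity of the maps $H^{\sigma,t}_\mu$ and $H^{\sigma,b}_\mu$ from \S\ref{sect:H^sigma} (resp.\ \S\ref{sect:bottom_periods}): by construction $\mathcal H^{\sigma,t}_\mu\circ\Delta^t_\Pi=\Delta^t_{{}^\sigma\Pi}\circ H^{\sigma,t}_\mu$ and likewise in degree $b$, so once more the square is obtained by forming a product of two already-commuting squares.

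For the third and fourth squares the key point I would invoke is that the $\sigma$-linear comparison isomorphisms $\mathcal H^{\sigma,q}$ on $H^q_c(\SSS_G,-)$ are functorial in the coefficient sheaf with respect to sheaf morphisms defined over $\Q(E_\mu)$ --- this is built into their construction in \S\ref{sect:sigmatwist}, following Clozel \cite{clozel}, p.\ 128. The vertical cup-product arrow $\wedge$ of the third square is induced by the $\Q(E_\mu)$-rational sheaf morphism $\E_\mu\times\E_\mu\to\E_\mu\otimes\E_\mu$, and therefore intertwines $\mathcal H^{\sigma,t}_\mu\times\mathcal H^{\sigma,b}_\mu$ with $\mathcal H^{\sigma,t+b}_{\mu+\mu}$ (the comparison map attached to the coefficient system $\E_\mu\otimes\E_\mu$). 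For the fourth square I would first observe that, by the self-duality hypothesis on $E_\mu$ and Schur's lemma, the isomorphism $E_\mu\cong E_\mu^\vee$ of $G(F)$-modules is unique up to $\Q(E_\mu)^\times$ and may hence be chosen over $\Q(E_\mu)$; the pairing $\langle\cdot,\cdot\rangle$ is then $\Q(E_\mu)$-rational, and the square follows by the same functoriality.

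The bottom square is of exactly the same nature as the rightmost square of \eqref{diagram} in Prop.\ \ref{prop:diagram}: here $\mathcal P$ is the de Rham surjection $H^{t+b}_c(\SSS_G,\C)\to\C$ constructed as in \S\ref{sect:deRham}, but with $G$ in the role of $H$ there, normalised by the $\Q$-rational Haar measure $dg_f$ on $G(\A_f)$; since $\dim_\R\SSS_G=t+b$ and both the measure and the chosen orientation are $\Q$-rational, $\mathcal P$ intertwines $\mathcal H^{\sigma,t+b}_0$ with $\sigma\colon\C\to\C$. Composing the five squares then yields the proposition. I expect the only step requiring genuine care to be the third and fourth squares --- that is, verifying that the cup product and the self-duality pairing really are $\Q(E_\mu)$-rational operations and that the comparison isomorphisms $\mathcal H^{\sigma}$ are functorial for them; everything else is a direct appeal to earlier results or to the $\Q$-rationality of the chosen Haar measures.
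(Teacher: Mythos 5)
Your proposal is correct and follows essentially the same route as the paper: the top square via Prop.\ \ref{prop:periods} and its degree-$b$ analogue from \S\ref{sect:bottom_periods}, the second square by the very definition of $H^{\sigma,t}_\mu$ and $H^{\sigma,b}_\mu$, and the remaining squares by rationality of the cup product, the (self-duality) pairing and the measure-normalised de Rham map --- squares the paper simply declares commutative by definition or obvious. Your added care about choosing the isomorphism $E_\mu\cong E_\mu^\vee$ over $\Q(E_\mu)$ is a legitimate filling-in of detail, not a different argument.
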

\begin{proof}
Commutativity of the upper square follows from Prop.\ \ref{prop:periods} and the considerations in \S\ref{sect:bottom_periods}. The square below it is commutative by definition. The commutativity of the other squares is obvious.
\end{proof}

\subsection{An integral representation of the residue of the Rankin--Selberg $L$-function at $s=1$}\label{sect:integralrep_RS}
Recall the Schwartz--Bruhat function $\Phi=\otimes_v\Phi_v\in\mathscr S(\A^{2n})$ from \S\ref{sect:SchwartzBruhat} with $m=2n$ in this case. As it satisfies the conditions of Lem.\ \ref{lem:Eisensteinseries}, the Eisenstein series attached to $\Phi$ has a pole at $s=1$ with constant, non-trivial residue $c_{2n}\hat\Phi(0)$.

Let $U_{2n}$ be the subgroup of upper triangular matrices in $G=\GL_{2n}$, whose diagonal entries are all equal to $1$. For each place $v\notin S_\infty$ let $\xi_v\in\Whit^{\psi_v}(\Pi_v)$ (resp.\ $\xi'_v\in\Whit^{\psi^{-1}_v}(\Pi_v)$) be a local Whittaker function, whereas for each $v\in S_\infty$ we assume that $\xi_v\in\Whit^{\psi_v}(\Pi_v)$ (resp.\ $\xi'_v\in\Whit^{\psi^{-1}_v}(\Pi_v)$) is $SO(2n)$-finite from the right. For such Whittaker functions, the local zeta-integrals
$$\Psi_v(s,\xi_v,\xi'_v,\Phi_v):=\int_{U_{2n}(F_v)\backslash\GL_{2n}(F_v)}\xi_v(g_v) \ \xi'_v(g_v) \Phi_v((0,...,0,1) g_v) |\det(g_v)|_v^s\ dg_v$$
converge for $\Re e(s)\geq 1$, cf.\ \cite{jacshal}, Prop.\ (1.5) and Prop.\ (3.17) \emph{loc. cit}. Assume now that for $v\notin S$, $\xi_v$ and $\xi'_v$ are the unique spherical Whittaker functions which are $1$ on the identity element $id_v\in G(F_v)$ and let $\varphi:=(W^{\psi})^{-1}(\xi)\in\Pi$ and $\varphi':=(W^{\psi^{-1}})^{-1}(\xi')\in\Pi$ be the corresponding cuspidal automorphic forms. 
\begin{thm}\label{thm:integralrep_RS}
Let $\Pi$ be a cuspidal automorphic representation as in \S\ref{sect:cusprep1}. Then
$$c_{2n} \ \hat\Phi(0) \ \cdot\int_{Z(\A)G(F)\backslash G(\A)} \varphi(g) \ \varphi'(g) \ dg = {\rm Res}_{s=1}(L^S(s,\Pi\times\Pi))\cdot\prod_{v\in S}\Psi_v(1,\xi_v,\xi'_v,\Phi_v).$$
\end{thm}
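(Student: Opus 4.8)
The plan is to run the classical Jacquet--Shalika Rankin--Selberg argument \cite{jacshal}. First I would introduce the global zeta-integral
$$
I(s):=\int_{Z(\A)G(F)\backslash G(\A)}\varphi(g) \ \varphi'(g) \ E(f_s,\Phi)(g) \ dg,
$$
where $E(f_s,\Phi)$ is the Eisenstein series on $G=\GL_{2n}$ attached to the section $f_s\in{\rm Ind}^{\GL_{2n}(\A)}_{\GL_{2n-1}(\A)\times\GL_1(\A)}[\delta^s_P]$ of \S\ref{sect:SchwartzBruhat} (with $m=2n$). A short computation from the defining formula shows that $E(f_s,\Phi)$ has trivial central character; since $\Pi$ has trivial central character by assumption, the product $\varphi\cdot\varphi'$ descends to a rapidly decreasing function on $Z(\A)G(F)\backslash G(\A)$, so $I(s)$ makes sense: it converges for $\Re e(s)\gg 0$ and, by Lem.\ \ref{lem:Eisensteinseries}, extends meromorphically to $\Re e(s)>0$ with at most a simple pole at $s=1$.

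Next I would unfold $I(s)$ in the usual way -- inserting the Whittaker--Fourier (mirabolic) expansion of the cusp form $\varphi$ and collapsing the summation over $P(F)\backslash\GL_{2n}(F)$ coming from the Eisenstein series against the quotient by $G(F)$, cf.\ \cite{jacshal} -- so as to obtain, for $\Re e(s)\gg0$,
$$
I(s)=\int_{U_{2n}(\A)\backslash\GL_{2n}(\A)}\xi(g) \ \xi'(g) \ \Phi((0,\dots,0,1)g) \ \|\det(g)\|^s \ dg,
$$
the two opposite Whittaker characters cancelling along $U_{2n}(\A)$ precisely because $\xi$ lies in $\Whit^{\psi}(\Pi)$ and $\xi'$ in $\Whit^{\psi^{-1}}(\Pi)$. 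This integral factors as an Euler product $I(s)=\prod_v\Psi_v(s,\xi_v,\xi'_v,\Phi_v)$ over all places of $F$, and the unramified computation of \cite{jacshal} gives $\Psi_v(s,\xi_v,\xi'_v,\Phi_v)=L(s,\Pi_v\times\Pi_v)$ for $v\notin S$ (spherical data on both sides). Hence
$$
I(s)=L^S(s,\Pi\times\Pi)\cdot\prod_{v\in S}\Psi_v(s,\xi_v,\xi'_v,\Phi_v)
$$
as an identity of meromorphic functions on $\Re e(s)>0$.

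Finally I would take residues at $s=1$. On the left, Lem.\ \ref{lem:Eisensteinseries} gives ${\rm Res}_{s=1}(E(f_s,\Phi)(g))=c_{2n}\cdot\hat\Phi(0)$, which is constant in $g$; since $\varphi\cdot\varphi'$ is rapidly decreasing on $Z(\A)G(F)\backslash G(\A)$, the residue passes through the integral, so
$$
{\rm Res}_{s=1}(I(s))=c_{2n}\cdot\hat\Phi(0)\cdot\int_{Z(\A)G(F)\backslash G(\A)}\varphi(g) \ \varphi'(g) \ dg.
$$
On the right, each local integral $\Psi_v(s,\xi_v,\xi'_v,\Phi_v)$ with $v\in S$ converges for $\Re e(s)\geq1$ and is in particular holomorphic at $s=1$, while $L^S(s,\Pi\times\Pi)=L^S(s,\Pi\times\Pi^\vee)$ has a simple pole at $s=1$ because $\Pi$ is self-dual; therefore the residue of the product is ${\rm Res}_{s=1}(L^S(s,\Pi\times\Pi))\cdot\prod_{v\in S}\Psi_v(1,\xi_v,\xi'_v,\Phi_v)$. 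Equating the two residues gives the assertion. I expect the only genuinely delicate points to be bookkeeping ones -- checking that $E(f_s,\Phi)$ and $\varphi\varphi'$ really descend to $Z(\A)G(F)\backslash G(\A)$, justifying the interchange of residue and integral by the rapid decay of cusp forms, and keeping the normalizations of \S\ref{sect:SchwartzBruhat} and \S\ref{sect:integralrep_RS} consistent throughout -- since the substantive analytic input (meromorphic continuation and the pole of the Eisenstein series, the unramified Rankin--Selberg computation, and the simple pole of $L^S(s,\Pi\times\Pi^\vee)$ at $s=1$) is all classical and already available to us through Lem.\ \ref{lem:Eisensteinseries} and \cite{jacshal}.
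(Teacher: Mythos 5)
Your argument is exactly the classical Jacquet--Shalika unfolding that the paper itself invokes: its proof consists of citing \cite{jacshal} (the identity (5), p.\ 550, together with Prop.\ (2.3)), and your proposal simply writes out that argument -- the integral of $\varphi\,\varphi'$ against the Eisenstein series, the unfolding to the Euler product, the unramified computation, and the passage to residues at $s=1$. So the proposal is correct and takes essentially the same route, with the only caveat being the (standard) justification of interchanging residue and integral, which you already flag.
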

\begin{proof}
This is well-known. Simply combine \cite{jacshal} (5), p.\ 550 with Prop.\ (2.3) in {\it loc.\ cit.}
\end{proof}
By Prop.\ \ref{prop:sigma_poles}, Thm.\ \ref{thm:integralrep_RS} automatically holds for the whole Aut$(\C)$-orbit of $\Pi$. As in \S\ref{sect:conseq_sigma}, and in order to have the notation ready at hand, we would like to make this more precise. Hence, for a given $\sigma\in$Aut$(\C)$, let
$$c_{2n}(\Phi,\sigma):=\sigma\left(\frac{\hat\Phi(0) \ {\rm vol}_{dg}(Z(F)\backslash Z(\A)/A_G)}{c_{2n}}\right)\cdot \frac{c_{2n}}{\hat\Phi(0) \ {\rm vol}_{dg}(Z(F)\backslash Z(\A)/A_G)}.$$
As in \S\ref{sect:conseq_sigma}, this is done purely for cosmetic reasons, and will only be used in the proof of main result of this section, cf.\ Thm.\ \ref{thm:Rankin_Selberg}. Observe that by \S\ref{sect:SchwartzBruhat}, $c_{2n}(\Phi,\sigma)$ is non-zero. In analogy to \S\ref{sect:conseq_sigma}, we let ${}^\sigma\Phi\in\mathscr S(\A^{2n})$ be the Schwartz-Bruhat function, which is defined as follows: At $v\notin S_\infty$, $({}^\sigma\Phi)_v:=\Phi_v$, whereas $({}^\sigma\Phi)_\infty:= c_{2n}(\Phi,\sigma)^{-1}\cdot\Phi_\infty$. Then, Thm.\ \ref{thm:integralrep_RS} in combination with Prop.\ \ref{prop:sigma_poles} shows that

\begin{equation}\label{eq:twist_RS}
c_{2n}{}^\sigma\hat\Phi(0) \int_{Z(\A)G(F)\backslash G(\A)}{}^\sigma\!\varphi(g) \ {}^\sigma\!\varphi'(g) \ dg  ={\rm Res}_{s=1}(L^S(s,{}^\sigma\Pi\times{}^\sigma\Pi))\cdot\prod_{v\in S}\Psi_v(1,{}^\sigma\xi_v,{}^\sigma\xi'_v,{}^\sigma\Phi_v).
\end{equation}

\subsection{Another archimedean period}\label{sect:archimedean_RS}
For $\xi_\infty=\otimes_{v\in S_\infty}\xi_v\in \Whit^{\psi_\infty}(\Pi_\infty)\cong\otimes_{v\in S_\infty} \Whit^{\psi_v}(\Pi_v)$ (resp.\ $\xi'_\infty=\otimes_{v\in S_\infty}\xi'_v\in \Whit^{\psi^{-1}_\infty}(\Pi_\infty)\cong\otimes_{v\in S_\infty} \Whit^{\psi^{-1}_v}(\Pi_v)$) being $K_\infty^\circ$-finite, we abbreviate
$$\Psi_\infty(s,\xi_\infty,\xi'_\infty,\Phi_\infty):=\prod_{v\in S_\infty}\Psi_v(s,\xi_v,\xi'_v,\Phi_v).$$
Recall our generators $[\Pi_\infty]^t$ and $[\Pi_\infty]^b$ from \S\ref{sect:generator} and \S\ref{sect:bottom_periods}. Similar to \S\ref{sect:archimedean}, we let $s(\underline i,\underline j)$ be the unique complex number, such that $X^*_{\underline i}\wedge X^*_{\underline j}=s(\underline i,\underline j)\cdot X_1\wedge...\wedge X_{t+b}$. Putting things together, consider
$$
c(\Pi_\infty):=\sum_{\underline i=(i_1,...,i_{t})}\sum_{\underline j=(j_1,...,j_{b})}\sum_{\alpha=1}^{\dim E_\mu} \sum_{\beta=1}^{\dim E_\mu} s(\underline i,\underline j)\ \<e_\alpha,e_\beta\> \Psi_\infty(1,\xi^{\epsilon_0}_{\infty,\underline i, \alpha}, \xi^{\epsilon_1}_{\infty,\underline j, \beta},\Phi_\infty).
$$
Then there is the following theorem, which follows from Prop.\ 5.0.3 in \cite{bala_ragh}.
\begin{thm}\label{thm:arch_RS}
The number $c(\Pi_\infty)$ is non-zero.
\end{thm}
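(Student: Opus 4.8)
The plan is to recognise $c(\Pi_\infty)$ as the archimedean part of the left-hand vertical composite in the diagram \eqref{diagram2} of Prop.\ \ref{prop:diagram2}, and then to deduce its non-vanishing from Poincar\'e duality for relative Lie algebra cohomology together with the non-vanishing of the archimedean Rankin--Selberg zeta integral at $s=1$. This is, in substance, \cite{bala_ragh}, Prop.\ 5.0.3; the present setting differs from \textit{loc.\ cit.} only in that the relevant global $L$-quantity is here ${\rm Res}_{s=1}(L^S(s,\Pi\times\Pi))$, which does not enter the archimedean statement at all.

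Concretely, I would first unwind the definition of $c(\Pi_\infty)$: running the generators $[\Pi_\infty]^t$ (\S\ref{sect:generator}) and $[\Pi_\infty]^b$ (\S\ref{sect:bottom_periods}) through the cup product $\wedge$, the coefficient pairing $\<\cdot,\cdot\>\colon E_\mu\otimes E_\mu\to\C$ (non-zero since $E_\mu\cong E_\mu^\vee$), the archimedean integral $\Psi_\infty(1,\cdot,\cdot,\Phi_\infty)$, and the de Rham identification $H^{t+b}(\m_G,K^\circ_\infty,\C)\cong\C$ fixed by the ordered basis $\{X_j\}$, one sees -- using $t+b=\dim_\R(\m_G/\k_\infty)$, so that the target is one-dimensional -- that $c(\Pi_\infty)$ equals, up to the non-zero volume and orientation constants built into $\mathcal P$ and the $\Psi_v$, the image of $[\Pi_\infty]^t\otimes[\Pi_\infty]^b$ under the archimedean incarnation of the chain in \eqref{diagram2}. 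As in the proof of Thm.\ \ref{hyp}, I would then peel off the $(d-1)$-dimensional abelian factor $\s=\z_\infty/\a_G$ (which contributes only a unit, via $\Lambda^{d-1}\s^*$) and decompose over $S_\infty$ by the K\"unneth rule, so that $c(\Pi_\infty)$ becomes a non-zero multiple of a product $\prod_{v\in S_\infty}c(\Pi_v)$ of analogous local numbers attached to the real group $\GL_{2n}(\R)$; each $c(\Pi_v)$ is built from a cup product of local cohomology generators in complementary degrees $r_v,b_v$ (with $r_v+b_v=\dim_\R(\g_v/\c_v)$), followed by $\<\cdot,\cdot\>$ on $E_{\mu_v}\otimes E_{\mu_v}$ and by $\Psi_v(1,\cdot,\cdot,\Phi_v)$, with values in the one-dimensional space $H^{r_v+b_v}(\g_v,(Z_v K_v)^\circ,\C)$. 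It thus suffices to show $c(\Pi_v)\neq 0$ for one $v\in S_\infty$.

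For this I would use that, by the archimedean theory of Rankin--Selberg integrals (\cite{jacshal}, Prop.\ (1.5) and Prop.\ (3.17) and the local computation there), $\Psi_v(1,\cdot,\cdot,\Phi_v)$ is a non-zero element of the one-dimensional space $\Hom_{\GL_{2n}(\R)}(\Whit^{\psi_v}(\Pi_v)\otimes\Whit^{\psi^{-1}_v}(\Pi_v),\C)$ -- note that $L(s,\Pi_v\times\Pi_v)$ is holomorphic and non-vanishing at $s=1$ -- hence a perfect invariant pairing, which (using $\Pi_v\cong\Pi_v^\vee$) identifies $\Whit^{\psi^{-1}_v}(\Pi_v)$ with the contragredient of $\Whit^{\psi_v}(\Pi_v)$. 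Combined with the self-duality isomorphism $E_{\mu_v}\cong E_{\mu_v}^\vee$, this realises the cup product defining $c(\Pi_v)$, up to a non-zero scalar, as the Poincar\'e-duality pairing between $H^{r_v}(\g_v,(Z_v K_v)^\circ,\Whit^{\psi_v}(\Pi_v)\otimes E_{\mu_v})$ and $H^{b_v}(\g_v,(Z_v K_v)^\circ,(\Whit^{\psi_v}(\Pi_v)\otimes E_{\mu_v})^\vee)$, with values in $H^{r_v+b_v}(\g_v,(Z_v K_v)^\circ,\C)$; by \cite{bowa}, Ch.\ I, this is a perfect, $\pi_0(\GL_{2n}(\R))$-equivariant pairing. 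The characters $\epsilon_0$ and $\epsilon_1$ were chosen (\S\ref{sect:generator}, \S\ref{sect:bottom_periods}) precisely so that it restricts to a perfect pairing between the one-dimensional isotypic lines $H^{r_v}(\ldots)[\epsilon_0]$ and $H^{b_v}(\ldots)[\epsilon_1]$ and lands in the isotypic line of $H^{r_v+b_v}(\g_v,(Z_v K_v)^\circ,\C)$ on which the de Rham functional $\mathcal P$ is non-zero; since those two source lines are spanned by the local components of $[\Pi_\infty]^t$ and $[\Pi_\infty]^b$, their cup product is non-zero, giving $c(\Pi_v)\neq 0$ and hence the theorem. The two delicate points -- both handled in \cite{bala_ragh}, Prop.\ 5.0.3 -- which I expect to be the main obstacle, are the non-vanishing of the archimedean functional $\Psi_v(1,\cdot,\cdot,\Phi_v)$ for the fixed $\Phi_v$ of \S\ref{sect:SchwartzBruhat}, and the $\pi_0(\GL_{2n}(\R))$-equivariant bookkeeping needed to match the cup product, composed with the two pairings, against Poincar\'e duality and to verify that it reaches exactly the component detected by $\mathcal P$.
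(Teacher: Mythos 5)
Your proposal follows essentially the same route as the paper: both reduce the non-vanishing of $c(\Pi_\infty)$ to a local statement at a single archimedean place by splitting off the abelian factor $\s$ and applying the K\"unneth decomposition (exactly as in the proof of Thm.\ \ref{hyp}), and then settle that local statement by appealing to \cite{bala_ragh}, Prop.\ 5.0.3. Your additional sketch of the Poincar\'e-duality mechanism behind that citation is just an unwinding of the cited result (whose delicate points you, like the paper, defer to \emph{loc.\ cit.}), so the argument is correct and matches the paper's proof.
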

\begin{proof}
We may adapt the argument given at the beginning of the proof of Thm.\ \ref{hyp}, to see that the non-vanishing of $c(\Pi_\infty)$ may be reduced to showing the non-vanishing of a similarly defined number $d(\Pi_v)$, which only depends on one given archimedean place $v\in S_\infty$. Indeed, there is a projection

$$M_b:\Lambda^b(\m_G/\k_{\infty})^*\ira\bigoplus_{u+v=b}\Lambda^u(\g_\infty/\c_\infty)^*\otimes\Lambda^{v}\s^*\twoheadrightarrow \Lambda^b(\g_\infty/\c_\infty)^*\otimes\Lambda^{0}\s^*,$$
where we wrote again $\c_\infty:=\z_\infty\oplus\k_\infty$. By reasons of degrees of cohomology, $M_b$ induces an isomorphism of (one-dimensional) vector spaces
$$\mathcal M_b: H^{b}(\m_G,K^\circ_\infty,\Whit^{\psi^{-1}_\infty}(\Pi_\infty)\otimes E_\mu)[\epsilon_1]\ira H^{b}(\g_\infty,(Z_\infty K_\infty)^\circ,\Whit^{\psi^{-1}_\infty}(\Pi_\infty)\otimes E_\mu)[\epsilon_1]\otimes \Lambda^{0}\s^*_\C,$$
whose effect on the generator $[\Whit^{\psi^{-1}_\infty}(\Pi_\infty)]^b$ is by mapping $X_{\underline j}^*$ to $M_b(X_{\underline j})\in \Lambda^b(\g_\infty/\c_\infty)^*\otimes\Lambda^{0}\s^*$. Whence, at the cost of re-scaling $M_b(X_{\underline j})$ by the non-trivial factor in $\Lambda^{0}\s^*=\R$, we may and will assume that $M_b(X_{\underline j})\in \Lambda^b(\g_\infty/\c_\infty)^*$. Recall the projection $L^t=L_r\otimes L_{d-1}$ and the isomorphism $\mathcal L^t=\mathcal L_r\otimes \mathcal L_{d-1}$ from the proof of \ref{hyp}\footnote{Observe the difference between the last factors in $L^t$ and $M_b$: While $\Lambda^{0}\s^*=\R$ by convention, the isomorphism $\Lambda^{d-1}\s^*\cong\R$ is not canonical, for which we introduced the notational factor $M_{d-1}$.}. Moreover, observe that there is an isomorphism
$$N^{t+b}:\Lambda^{t+b}(\m_G/\k_{\infty})^*\ira \Lambda^{r+b}(\g_\infty/\c_\infty)^*\otimes\Lambda^{d-1}\s^*,$$
which we factor similarly to $L^t$ as $N^{t+b}(X_1\wedge...\wedge X_{t+b})=N_{r+b}(X_1\wedge...\wedge X_{t+b})\otimes N_{d-1}(X_1\wedge...\wedge X_{t+b})$, where $N_{r+b}(X_1\wedge...\wedge X_{t+b})\in\Lambda^{r+b}(\g_\infty/\c_\infty)^*$ and $N_{d-1}(X_1\wedge...\wedge X_{t+b})\in\Lambda^{d-1}\s^*$. It hence follows that the number $c(\Pi_\infty)$ is a non-trivial multiple of
$$d(\Pi_\infty):=\sum_{\underline i=(i_1,...,i_{t})}\sum_{\underline j=(j_1,...,j_{b})}\sum_{\alpha=1}^{\dim E_\mu} \sum_{\beta=1}^{\dim E_\mu} u(\underline i,\underline j)\ \<e_\alpha,e_\beta\> \Psi_\infty(1,\xi^{\epsilon_0}_{\infty,\underline i, \alpha}, \xi^{\epsilon_1}_{\infty,\underline j, \beta},\Phi_\infty),$$
where $u(\underline i, \underline j)$ is the uniquely defined complex number, such that $L_r(X^*_{\underline i})\wedge M_b(X^*_{\underline j})=u(\underline i,\underline j)\cdot N_{r+b}(X_1\wedge...\wedge X_{t+b})$. The number $d(\Pi_\infty)$ factors as $d(\Pi_\infty)=\prod_{v \in S_\infty} d(\Pi_v)$, where each local factor $d(\Pi_v)$ is defined analogously (using $\Lambda^{r+b}(\g_\infty/\c_{\infty})^*\cong \bigotimes\Lambda^{2n^2+n-1}(\g_v/\c_{v})^*$). Therefore, we may finish the proof by showing that $d(\Pi_v)$ is non-zero for all $v\in S_\infty$. This is the reduction to a single archimedean place $v\in S_\infty$, mentioned at the beginning of the proof. The result hence follows by \cite{bala_ragh}, Prop.\ 5.0.3. 
\end{proof}
In view of the latter non-vanishing result and Prop.\ \ref{prop:sigma_poles}, we may define
\begin{equation}\label{eq:arch_period_RS}
p({}^\sigma\Pi_\infty):=c({}^\sigma\Pi_\infty)^{-1}
\end{equation}
for all $\sigma\in$ Aut$(\C)$. 

\subsection{Rationality of the residue of the Rankin--Selberg $L$-function at $s=1$}\label{sect:Rankin_Selberg}
The main result of this section is the following

\begin{thm}\label{thm:Rankin_Selberg}
Let $F$ be a totally real number field and $G=\GL_{2n}/F$, $n\geq 2$. Let $\Pi$ be a self-dual, unitary, cuspidal automorphic representation of $G(\A)$ (with trivial central character), which is cohomological with respect to an irreducible, self-contragredient, algebraic, finite-dimensional representation $E_\mu$ of $G_\infty$. Then, for every $\sigma\in {\rm Aut}(\C)$,
$$
\sigma\left(
\frac{{\rm Res}_{s=1}(L^S(s,\Pi\times\Pi))}{p^t(\Pi) \ p^b(\Pi) \ p(\Pi_\infty)}\right) \ = \
\frac{{\rm Res}_{s=1}(L^S(s,{}^\sigma\Pi\times{}^\sigma\Pi))}{p^t({}^\sigma\Pi) \ p^b({}^\sigma\Pi) \ p({}^\sigma\Pi_\infty)}.
$$
In particular,
$$
{\rm Res}_{s=1}(L^S(s,\Pi\times\Pi)) \ \sim_{\Q(\Pi_f)^\times} \
p^t(\Pi) \ p^b(\Pi) \ p(\Pi_\infty),
$$
where ``$\sim_{\Q(\Pi_f)^\times}$'' means up to multiplication by a non-trivial element in the number field $\Q(\Pi_f)$.
\end{thm}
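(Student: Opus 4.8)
The plan is to mimic exactly the structure of the proof of Theorem~\ref{thm:residue}, but now using the ``doubled'' rational diagram \eqref{diagram2} from Prop.~\ref{prop:diagram2} together with the integral representation of the Rankin--Selberg $L$-function from Thm.~\ref{thm:integralrep_RS} and its $\sigma$-twisted counterpart \eqref{eq:twist_RS}. First I would fix a pair of Whittaker functions $\xi_f=\otimes_{v\notin S_\infty}\xi_v\in\Whit^{\psi_f}(\Pi_f)$ and $\xi'_f=\otimes_{v\notin S_\infty}\xi'_v\in\Whit^{\psi^{-1}_f}(\Pi_f)$, normalised to be spherical and equal to $1$ on the identity for $v\notin S$. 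Running these through the commutative diagram \eqref{diagram2}, and writing $\Xi$ (resp.~${}^\sigma\Xi$) for the composite of the left (resp.~right) vertical arrows, Prop.~\ref{prop:diagram2} gives the identity $\sigma(\Xi(\xi_f,\xi'_f))={}^\sigma\Xi({}^\sigma\xi_f,{}^\sigma\xi'_f)$.

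The next step is to evaluate both sides of this identity explicitly. Inserting the generators $[\Pi_\infty]^t$ and $[\Pi_\infty]^b$ from \S\ref{sect:generator} and \S\ref{sect:bottom_periods}, the cup-product $\wedge$ followed by the pairing $\<\cdot,\cdot\>$ and the de~Rham map $\mathcal P$ turns $\Xi(\xi_f,\xi'_f)$ into a period multiple of the global zeta integral $\int_{Z(\A)G(F)\backslash G(\A)}\varphi(g)\,\varphi'(g)\,dg$ up to the archimedean combinatorial constant $c(\Pi_\infty)$ of \S\ref{sect:archimedean_RS} and the factor $p^t(\Pi)p^b(\Pi)$ coming from the normalisations $\mathcal F^t_\Pi$, $\mathcal F^b_\Pi$; precisely here one uses that the basis $\{X_j\}$ of $\m_G/\k_\infty$ is the concatenation governing $s(\underline i,\underline j)$. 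Applying Thm.~\ref{thm:integralrep_RS} (resp.~\eqref{eq:twist_RS} for ${}^\sigma\Pi$) and the definition $p(\Pi_\infty)=c(\Pi_\infty)^{-1}$, $p({}^\sigma\Pi_\infty)=c({}^\sigma\Pi_\infty)^{-1}$, the identity $\sigma(\Xi(\xi_f,\xi'_f))={}^\sigma\Xi({}^\sigma\xi_f,{}^\sigma\xi'_f)$ rewrites as
$$
\sigma\!\left(\frac{{\rm vol}_{dg}(Z(F)\backslash Z(\A)/A_G)}{c_{2n}\hat\Phi(0)}\cdot\frac{{\rm Res}_{s=1}(L^S(s,\Pi\times\Pi))}{p^t(\Pi)\,p^b(\Pi)\,p(\Pi_\infty)}\prod_{v\in S\backslash S_\infty}\Psi_v(1,\xi_v,\xi'_v,\Phi_v)\right)
$$
$$
=\frac{{\rm vol}_{dg}(Z(F)\backslash Z(\A)/A_G)}{c_{2n}\,{}^\sigma\hat\Phi(0)}\cdot\frac{{\rm Res}_{s=1}(L^S(s,{}^\sigma\Pi\times{}^\sigma\Pi))}{p^t({}^\sigma\Pi)\,p^b({}^\sigma\Pi)\,p({}^\sigma\Pi_\infty)}\prod_{v\in S\backslash S_\infty}\Psi_v(1,{}^\sigma\xi_v,{}^\sigma\xi'_v,{}^\sigma\Phi_v).
$$
The cosmetic choice of ${}^\sigma\Phi_\infty=c_{2n}(\Phi,\sigma)^{-1}\Phi_\infty$ cancels the volume/Eisenstein prefactors, reducing the claim to the equivariance $\sigma\bigl(\prod_{v\in S\backslash S_\infty}\Psi_v(1,\xi_v,\xi'_v,\Phi_v)\bigr)=\prod_{v\in S\backslash S_\infty}\Psi_v(1,{}^\sigma\xi_v,{}^\sigma\xi'_v,{}^\sigma\Phi_v)$ of the finite-place zeta integrals; as in the proof of Thm.~\ref{thm:residue} one first argues this finite product is nonzero (by holomorphy of $\Psi_\infty(s,\cdot,\cdot,\Phi_\infty)$ at $s=1$ together with the simple pole of $L^S(s,\Pi\times\Pi)$ at $s=1$ forced by the exterior-square pole), and then invokes the standard rationality of the ramified Rankin--Selberg zeta integrals under $\sigma\in{\rm Aut}(\C)$ (the change of variable matching $\Psi_v(1,{}^\sigma\xi_v,{}^\sigma\xi'_v,{}^\sigma\Phi_v)=\Psi_v(1,\sigma\circ\xi_v,\sigma\circ\xi'_v,\Phi_v)$, cf.~the treatment in \cite{grob_harris_lapid} and \cite{bala_ragh}). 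The last assertion then follows by Strong Multiplicity One, as $\sigma$ fixes the left-hand ratio for all $\sigma\in\mathfrak S(\Pi_f)$.

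I expect the main obstacle to be the careful bookkeeping in the archimedean reduction: one must check that the cup-product construction in \eqref{diagram2}, when evaluated on the two generators, really produces the combinatorial constant $c(\Pi_\infty)$ of \S\ref{sect:archimedean_RS} (and not some other contraction), and that the signs $s(\underline i,\underline j)$ are the ones appearing there — this is the exact analogue of the passage from $c^t(\Pi_\infty)$ to $d^t(\Pi_\infty)$ in the proof of Thm.~\ref{hyp}, and its non-triviality is precisely what Thm.~\ref{thm:arch_RS} is there to supply. Everything else is a faithful transcription of the argument for Thm.~\ref{thm:residue}, with $H$ replaced by $G$ and the $\GL_n\times\GL_n$ period replaced by the diagonal Rankin--Selberg period.
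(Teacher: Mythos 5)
Your overall route is indeed the paper's: the identity $\sigma(\Upsilon(\xi_f\times\xi'_f))={}^\sigma\Upsilon({}^\sigma\xi_f\times{}^\sigma\xi'_f)$ from diagram \eqref{diagram2}, evaluation via Thm.\ \ref{thm:integralrep_RS} and \eqref{eq:twist_RS}, cancellation of the volume/Eisenstein prefactors through the cosmetic choice of ${}^\sigma\Phi$, reduction to the ramified zeta factors, and Strong Multiplicity One at the end. The genuine gap is in your non-vanishing step for $\prod_{v\in S\setminus S_\infty}\Psi_v(1,\xi_v,\xi'_v,\Phi_v)$. First, Thm.\ \ref{thm:Rankin_Selberg} does \emph{not} assume the exterior-square pole, so you cannot invoke it; the pole of $L^S(s,\Pi\times\Pi)$ at $s=1$ does hold, but only because $\Pi\cong\Pi^\vee$ (Jacquet--Shalika), not because of Prop.\ \ref{prop:lift}. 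Second, and more seriously, the pole of the partial $L$-function together with holomorphy of $\Psi_\infty$ at $s=1$ does not yield the non-vanishing you need: transplanting the argument from Thm.\ \ref{thm:residue}, the vanishing of the ramified product at $s=1$ would only show that the \emph{global} integral has no pole there, i.e.\ that $\int_{Z(\A)G(F)\backslash G(\A)}\varphi\,\varphi'\,dg=0$; in Thm.\ \ref{thm:residue} the contradiction came from the non-vanishing of the global Bump--Friedberg period, which is exactly what the lift hypothesis supplies, and here the analogous global input is the Petersson inner product, which is only available after choosing $\xi'_f=\overline{\xi_f}$ (and, archimedeanly, $\xi^{\epsilon_1}_{\infty,\underline j,\beta}=\overline{\xi^{\epsilon_0}_{\infty,\underline i,\alpha}}$) --- a choice you never make.

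The paper closes this gap differently: it fixes $\xi'_f:=\overline{\xi_f}$ from the start, and at the places $v\in S\setminus S_\infty$ it uses the remaining freedom to take $\xi_v$ the essential (new) vector with $\xi_v(id_v)=1$ and $\xi'_v=\overline{\xi_v}$; with these choices, both the $\sigma$-equivariance \emph{and} the non-vanishing in \eqref{eq:sigma_RS} are exactly what \cite{bala_ragh}, 2.3 (see also \cite{grob_harris_lapid}, 4.3) provides. Your citation of those references covers only the equivariance as you state it, not the non-vanishing, and without specifying the local vectors the cited results do not apply. Finally, for the second assertion (``$\sim_{\Q(\Pi_f)^\times}$'' with a \emph{non-trivial} factor) one also needs ${\rm Res}_{s=1}(L^S(s,\Pi\times\Pi))\neq 0$; the paper proves this via Thm.\ \ref{thm:arch_RS} and the Petersson-norm argument, whereas your appeal to Strong Multiplicity One alone leaves this point unaddressed. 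Your remarks on the archimedean bookkeeping (that the cup product evaluated on $[\Pi_\infty]^t$ and $[\Pi_\infty]^b$ produces $c(\Pi_\infty)$ with the constants $s(\underline i,\underline j)$, and that Thm.\ \ref{thm:arch_RS} supplies its non-vanishing) are correct and match the paper.
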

\begin{proof}
Let $\Pi$ be as in the statement of the theorem and recall the commutative diagram \eqref{diagram2} from Prop.\ \ref{prop:diagram2}. Let us denote by 
$$\Upsilon:=\mathcal P\circ\<\cdot,\cdot\>\circ\wedge\circ(\Delta^t_\Pi\times\Delta^b_\Pi)\circ(\mathcal F^t_\Pi\times\mathcal F^b_\Pi)$$
the composition of the vertical, linear maps in the left column of \eqref{diagram2}, while we denote by ${}^\sigma\Upsilon$ be the composition of the vertical, linear maps in the right column of \eqref{diagram2}. Let $\xi_f=\otimes_{v\notin S_\infty}\xi_v\in\Whit^{\psi_f}(\Pi_f)\cong\otimes_{v\notin S_\infty}\Whit^{\psi_v}(\Pi_v)$ be a Whittaker function, such that for $v\notin S$, $\xi_v$ is invariant under $G(\O_v)$ and normalized such that $\xi_v(id_v)=1$. Define $\xi'_f:=\overline\xi_f$ to be its complex conjugate function. Then, clearly $\xi'_f=\otimes_{v\notin S_\infty}\xi'_v\in\Whit^{\psi^{-1}_f}(\Pi_f)\cong\otimes_{v\notin S_\infty}\Whit^{\psi^{-1}_v}(\Pi_v)$ is a Whittaker function, where for $v\notin S$, $\xi'_v$ is the unique spherical function in $\Whit^{\psi^{-1}_v}(\Pi_v)$, normalized by $\xi'_v(id_v)=1$. Given $\sigma\in$Aut$(\C)$, let $W^\sigma(\xi_f)={}^\sigma\xi_f\in\Whit^{\psi_f}({}^\sigma\Pi_f)$ (resp.\ $W^\sigma(\xi'_f)={}^\sigma\xi'_f\in\Whit^{\psi^{-1}_f}({}^\sigma\Pi_f)$) be the $\sigma$-twisted Whittaker function, cf.\ \S\ref{sect:Whitt_rational_str}. Then, by Prop.\ \ref{prop:diagram2}
\begin{equation}\label{eq:diagram2}
\sigma(\Upsilon(\xi_f\times\xi'_f))={}^\sigma\Upsilon({}^\sigma\xi_f\times{}^\sigma\xi'_f).
\end{equation}
The first assertion of theorem follows now as in the proof of Thm.\ \ref{thm:residue}, by making both sides of this equation explicit. Therefore, recall our generators  $[\Pi_\infty]^t$ and $[\Pi_\infty]^b$ from \S\ref{sect:generator} and \S\ref{sect:bottom_periods} and let $\varphi_{\underline i,\alpha}:=(\whit^{\psi})^{-1}(\xi^{\epsilon_0}_{\infty,\underline i,\alpha}\otimes\xi_f)\in\Pi$ and $\varphi'_{\underline j,\beta}:=(\whit^{\psi^{-1}})^{-1}(\xi^{\epsilon_1}_{\infty,\underline j,\beta}\otimes\xi'_f)\in\Pi$. Simply by inserting into the definitions of the maps, which make up $\Upsilon$ by composition and using the fact that $\Pi$ was assumed to have trivial central character, we obtain
$$\Upsilon(\xi_f\times\xi'_f)=\frac{{\rm vol}_{dg}(Z(F)\backslash Z(\A)/A_G)}{p^t(\Pi) \ p^b(\Pi)}\cdot\sum_{\underline i,\underline j}\sum_{\alpha,\beta}s(\underline i,\underline j) \ \<e_\alpha,e_\beta\> \ \int_{Z(\A)G(F)\backslash G(\A)} \varphi_{\underline i,\alpha}(g) \ \varphi'_{\underline j,\beta}(g) \ dg.$$
Let $\Phi=\otimes_v\Phi\in\mathscr S(\A^{2n})$ be the global Schwartz-Bruhat function, as chosen in \S\ref{sect:SchwartzBruhat} for $m=2n$. Hence, Thm.\ \ref{thm:integralrep_RS} together with the definition of our archimedean period $p(\Pi_\infty)$, shows that we may rewrite this by
$$\Upsilon(\xi_f\times\xi'_f)=\frac{{\rm vol}_{dg}(Z(F)\backslash Z(\A)/A_G)}{c_{2n}\ \hat\Phi(0)}\cdot \frac{{\rm Res}_{s=1}(L^S(s,\Pi\times\Pi))}{p^t(\Pi) \ p^b(\Pi) \ p(\Pi_\infty)}\cdot\prod_{v\in S\backslash S_\infty}\Psi_v(1,\xi_v,\xi'_v,\Phi_v).$$
Inserting the definition of ${}^\sigma\Phi$, \eqref{eq:diagram2} and \eqref{eq:twist_RS} show that
$$\sigma\left(\frac{{\rm Res}_{s=1}(L^S(s,\Pi\times\Pi))}{p^t(\Pi) \ p^b(\Pi) \ p(\Pi_\infty)}\cdot\prod_{v\in S\backslash S_\infty}\Psi_v(1,\xi_v,\xi'_v,\Phi_v)\right)$$
$$=\frac{{\rm Res}_{s=1}(L^S(s,{}^\sigma\Pi\times{}^\sigma\Pi))}{p^t({}^\sigma\Pi) \ p^b({}^\sigma\Pi) \ p({}^\sigma\Pi_\infty)}\cdot\prod_{v\in S\backslash S_\infty}\Psi_v(1,{}^\sigma\xi_v,{}^\sigma\xi'_v,{}^\sigma\Phi_v).$$
We may therefore finish the proof of the first assertion of the theorem, if we show that
\begin{equation}\label{eq:sigma_RS}
\sigma\left(\prod_{v\in S\backslash S_\infty}\Psi_v(1,\xi_v,\xi'_v,\Phi_v)\right)=\prod_{v\in S\backslash S_\infty}\Psi_v(1,{}^\sigma\xi_v,{}^\sigma\xi'_v,{}^\sigma\Phi_v)\neq 0.
\end{equation}
Observe that we still have the freedom to choose $\xi_v$ and $\xi'_v$ at $v\in S\backslash S_\infty$. If we assume that $\xi_v$ is the unique essential vector in $\Whit^{\psi_v}(\Pi_v)$ (cf.\ \cite{jac-ps-shalika-mathann}, (4.4) and Thm.\ (5.1) {\it loc.\ cit.}) which satisfies $\xi_v(id_v)=1$ and let $\xi'_v:=\overline\xi_v$, then \eqref{eq:sigma_RS} has been proved in \cite{bala_ragh}, 2.3. See also \cite{grob_harris_lapid}, 4.3.

The second assertion of Thm.\ \ref{thm:Rankin_Selberg} follows from the first one, applying Strong Multiplicity One for the cuspidal automorphic spectrum of $G(\A)$ and recalling that ${\rm Res}_{s=1}(L^S(s,\Pi\times\Pi))$ is non-zero. Let us give a short argument for the last claim. In fact, by Thm.\ \ref{thm:arch_RS}, there is a tuple of indices $(\underline i, \underline j, \alpha, \beta)$, such that $\Psi_\infty(1,\xi^{\epsilon_0}_{\infty,\underline i, \alpha}, \xi^{\epsilon_1}_{\infty,\underline j, \beta},\Phi_\infty)$ is non-zero. Abbreviate $\xi_v:=\xi^{\epsilon_0}_{v,\underline i, \alpha}$ and $\xi'_v:=\xi^{\epsilon_1}_{v,\underline j, \beta}$ for $v\in S_\infty$. Hence,
$$\prod_{v\in S}\Psi_v(1,\xi_v,\xi'_v,\Phi_v)=\prod_{v\in S\backslash S_\infty}\Psi_v(1,\xi_v,\xi'_v,\Phi_v)\cdot \Psi_\infty(1,\xi^{\epsilon_0}_{\infty,\underline i, \alpha}, \xi^{\epsilon_1}_{\infty,\underline j, \beta},\Phi_\infty)\neq 0,$$
by \eqref{eq:sigma_RS} and the non-vanishing of $\Psi_\infty(1,\xi^{\epsilon_0}_{\infty,\underline i, \alpha}, \xi^{\epsilon_1}_{\infty,\underline j, \beta},\Phi_\infty)$. So, if ${\rm Res}_{s=1}(L^S(s,\Pi\times\Pi))$ were zero, then, using Thm.\ \ref{thm:integralrep_RS}, the integral
$$\int_{Z(\A)G(F)\backslash G(\A)} \varphi_{\underline i,\alpha}(g) \ \varphi'_{\underline j,\beta}(g) \ dg$$ 
would have to be zero as well. Observe moreover, that without loss of generality, we may had put $\xi^{\epsilon_1}_{\infty,\underline j, \beta}=\overline\xi^{\epsilon_0}_{\infty,\underline i, \alpha}$. Hence, since this latter integral is the Petersson inner product of the non-trivial cusp form $\varphi'_{\underline j,\beta}$ with itself (by our choice of $\xi'_f$ and $\xi^{\epsilon_1}_{\infty,\underline j, \beta}$) this were a contradiction.
\end{proof}

\section{A rationality result for the symmetric square $L$-function}

\subsection{Rationality of the symmetric square $L$-function at $s=1$}
Let $\Pi$ be a cuspidal automorphic representation of $G(\A)$ as in \S\ref{sect:cusprep1} and $\sigma\in$ Aut$(\C)$. Recall the archimedean periods $p^t({}^\sigma\Pi_\infty)$ from \eqref{eq:arch_period} and $p({}^\sigma\Pi_\infty)$ from \eqref{eq:arch_period_RS}. We define our bottom-degree, archimedean period by $$p^b({}^\sigma\Pi_\infty):=\frac{p({}^\sigma\Pi_\infty)}{p^t({}^\sigma\Pi_\infty)}.$$ 
Of course, by Thm.\ \ref{hyp} and Thm.\ \ref{thm:arch_RS}, $p^b({}^\sigma\Pi_\infty)$ is well-defined and non-zero. The following result this is our second main theorem.

\begin{thm}\label{thm:Symm2}
Let $F$ be a totally real number field and $G=\GL_{2n}/F$, $n\geq 2$. Let $\Pi$ be a unitary cuspidal automorphic representation of $G(\A)$ (self-dual and with trivial central character), which is cohomological with respect to an irreducible, self-contragredient, algebraic, finite-dimensional representation $E_\mu$ of $G_\infty$. Assume that $\Pi$ satisfies the equivalent conditions of Prop.\ \ref{prop:lift}, i.e., the partial exterior square $L$-function $L^S(s,\Pi,\Lambda^2)$ has a pole at $s=1$. Then, for every $\sigma\in {\rm Aut}(\C)$,
$$
\sigma\left(
\frac{L(\tfrac12,\Pi_f)\ p^b(\Pi) \ p^b(\Pi_\infty)}{L^S(1,\Pi,{\rm Sym}^2)}\right) \ = \
\frac{L(\tfrac12,{}^\sigma\Pi_f)\ p^b({}^\sigma\Pi) \ p^b({}^\sigma\Pi_\infty)}{L^S(1,{}^\sigma\Pi,{\rm Sym}^2)}.
$$
In particular, 
$$
L^S(1,\Pi,{\rm Sym}^2) \sim_{\Q(\Pi_f)} L(\tfrac12,\Pi_f)\ p^b(\Pi) \ p^b(\Pi_\infty)
$$
where ``$\sim_{\Q(\Pi_f)}$'' means up to multiplication of $L^S(1,\Pi,{\rm Sym}^2)$ by an element in the number field $\Q(\Pi_f)$.
\end{thm}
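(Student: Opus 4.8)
The plan is to derive Theorem \ref{thm:Symm2} purely formally from Theorem \ref{thm:residue} and Theorem \ref{thm:Rankin_Selberg}, using the Euler product factorization $L^S(s,\Pi\times\Pi)=L^S(s,\Pi,{\rm Sym}^2)\cdot L^S(s,\Pi,\Lambda^2)$. First I would record that, under the standing hypotheses, $L^S(s,\Pi,\Lambda^2)$ has a simple pole at $s=1$ (Proposition \ref{prop:lift}), which accounts for the entire pole of $L^S(s,\Pi\times\Pi)$ there; consequently $L^S(s,\Pi,{\rm Sym}^2)$ is holomorphic and non-vanishing at $s=1$, and
$${\rm Res}_{s=1}\bigl(L^S(s,\Pi\times\Pi)\bigr)=L^S(1,\Pi,{\rm Sym}^2)\cdot{\rm Res}_{s=1}\bigl(L^S(s,\Pi,\Lambda^2)\bigr).$$
Substituting this relation, together with the definition $p^b(\Pi_\infty):=p(\Pi_\infty)/p^t(\Pi_\infty)$, one checks directly that
$$\frac{L(\tfrac12,\Pi_f)\,p^b(\Pi)\,p^b(\Pi_\infty)}{L^S(1,\Pi,{\rm Sym}^2)}=\frac{L(\tfrac12,\Pi_f)\cdot{\rm Res}_{s=1}(L^S(s,\Pi,\Lambda^2))}{p^t(\Pi)\,p^t(\Pi_\infty)}\cdot\left(\frac{{\rm Res}_{s=1}(L^S(s,\Pi\times\Pi))}{p^t(\Pi)\,p^b(\Pi)\,p(\Pi_\infty)}\right)^{-1},$$
the period $p^t(\Pi)$ cancelling between numerator and denominator. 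Thus the quantity whose $\sigma$-equivariance is asserted is exactly the ratio of the bracketed quantity in Theorem \ref{thm:residue} to the bracketed quantity in Theorem \ref{thm:Rankin_Selberg}.

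Next I would invoke Proposition \ref{prop:sigma_poles}: since ${}^\sigma\Pi$ again satisfies all the hypotheses, both Theorem \ref{thm:residue} and Theorem \ref{thm:Rankin_Selberg} apply verbatim to ${}^\sigma\Pi$ as well. To be allowed to divide, I would note that every quantity being inverted is non-zero: $L^S(1,\Pi,{\rm Sym}^2)\neq0$ by the first paragraph, ${\rm Res}_{s=1}(L^S(s,\Pi\times\Pi))\neq0$ as established in the proof of Theorem \ref{thm:Rankin_Selberg}, ${\rm Res}_{s=1}(L^S(s,\Pi,\Lambda^2))\neq0$ by hypothesis, and the periods $p^t(\Pi),p^b(\Pi),p^t(\Pi_\infty),p(\Pi_\infty),p^b(\Pi_\infty)$ are all non-zero by Proposition \ref{prop:periods}, \S\ref{sect:bottom_periods}, Theorem \ref{hyp} and Theorem \ref{thm:arch_RS}. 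Since $\sigma\in{\rm Aut}(\C)$ is a field automorphism it commutes with quotients and inverses, so applying $\sigma$ to the displayed identity and feeding in the $\sigma$-equivariances supplied by Theorems \ref{thm:residue} and \ref{thm:Rankin_Selberg} --- together with $\sigma(L(\tfrac12,\Pi_f))=L(\tfrac12,{}^\sigma\Pi_f)$ --- yields precisely the claimed equality.

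Finally, the ``in particular'' assertion follows in the usual way: letting $\sigma$ range over ${\rm Aut}(\C/\Q(\Pi_f))$ shows that $L^S(1,\Pi,{\rm Sym}^2)/\bigl(L(\tfrac12,\Pi_f)\,p^b(\Pi)\,p^b(\Pi_\infty)\bigr)$ is fixed by this group, hence lies in $\Q(\Pi_f)$, using that $\Q(\Pi_f)$ is the rationality field of $\Pi$ by Strong Multiplicity One (cf.\ \S\ref{sect:sigmatwist}--\S\ref{sect:rat_struct}). I do not anticipate any serious obstacle here, as all the hard analytic and cohomological work is already contained in Theorems \ref{thm:residue} and \ref{thm:Rankin_Selberg}; the only points demanding attention are the consistent bookkeeping of the archimedean periods through $p^b(\Pi_\infty)=p(\Pi_\infty)/p^t(\Pi_\infty)$ and checking that every inverted quantity is non-zero.
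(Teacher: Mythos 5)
Your proposal is correct and follows essentially the same route as the paper: factor $L^S(s,\Pi\times\Pi)=L^S(s,\Pi,{\rm Sym}^2)\cdot L^S(s,\Pi,\Lambda^2)$, pass to residues at $s=1$, divide the identity of Thm.\ \ref{thm:residue} by that of Thm.\ \ref{thm:Rankin_Selberg} using $p^b(\Pi_\infty)=p(\Pi_\infty)/p^t(\Pi_\infty)$, and conclude the ``in particular'' via Strong Multiplicity One. The only cosmetic difference is that the paper secures the non-vanishing of $L^S(1,\Pi,{\rm Sym}^2)$ by citing Shahidi's theorem rather than deducing it (together with the simplicity of the pole of $L^S(s,\Pi,\Lambda^2)$) from the simple pole of the Rankin--Selberg $L$-function, a point you should make explicit since Prop.\ \ref{prop:lift} alone only gives the existence of the pole.
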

\begin{proof}
Let $\Pi$ be as in the statement of the theorem. Then $\Pi$ satisfies the assumptions of Thm.\ \ref{thm:residue} and Thm.\ \ref{thm:Rankin_Selberg}. We have the equality
$$L^S(s,\Pi\times\Pi) = L^S(s,\Pi,{\rm Sym}^2) \cdot L^S(s,\Pi,\Lambda^2)$$
as meromorphic functions in $s$, whence, by the assumptions on $\Pi$, we obtain
$${\rm Res}_{s=1}(L^S(s,\Pi\times\Pi)) = L^S(1,\Pi,{\rm Sym}^2) \cdot {\rm Res}_{s=1}(L^S(s,\Pi,\Lambda^2)).$$
Since $L^S(1,\Pi,{\rm Sym}^2)$ is non-zero (cf.\ \cite{shahidi_certainL}, Thm.\ 5.1), the first assertion of the theorem follows by the definition of $p^b(\Pi_\infty)$ and Thm.\ \ref{thm:residue} and Thm.\ \ref{thm:Rankin_Selberg}. The second assertion is now again a consequence of Strong Multiplicity One for the cuspidal automorphic spectrum of $G(\A)$.
\end{proof}

\subsection{Whittaker-Shalika periods and the symmetric square $L$-function}\label{sect:WSsymm}
As in the case of the exterior square $L$-function, we obtain a corollary of our second main theorem, Thm.\ \ref{thm:Symm2}, using the main results of our joint paper \cite{grob-ragh} with Raghuram. Recall the non-zero Shalika periods $\omega^\epsilon(\Pi_f)$ and $\omega(\Pi_\infty)=\omega(\Pi_\infty,0)$ from \S\ref{sect:WSext} above, respectively from \cite{grob-ragh}, Def./Prop.\ 4.2.1 and Thm.\ 6.6.2, therein, their existence being guaranteed as in \S\ref{sect:WSext}.
Define the Whittaker-Shalika periods
$$P^b(\Pi):=p^b(\Pi)\cdot\omega^{\epsilon_0}(\Pi_f)\quad\quad\textrm{and}\quad\quad P^b(\Pi_\infty):=p^b(\Pi_\infty)\cdot\omega(\Pi_\infty).$$
Then, we have the following result.

\begin{cor}\label{cor:Wsymm}
Let $\Pi$ be as in the statement of Thm.\ \ref{thm:Symm2}. Then
$$L^S(1,\Pi,{\rm Sym}^2)\ \approx_{\Q(\Pi_f)} \ P^b(\Pi) \ P^b(\Pi_\infty),$$
where ``$\approx_{\Q(\Pi_f)}$'' means up to multiplication of both sides by an element in the number field $\Q(\Pi_f)$.
\end{cor}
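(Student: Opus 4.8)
The plan is to deduce the corollary formally from Theorem~\ref{thm:Symm2} by eliminating the factor $L(\tfrac12,\Pi_f)$ with the help of the Shalika-period rationality result of Grobner--Raghuram, exactly in the spirit of how Cor.~\ref{cor:Wext} was derived from Thm.~\ref{thm:residue}.

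First I would observe that, under the hypotheses of Theorem~\ref{thm:Symm2}, the partial exterior square $L$-function $L^S(s,\Pi,\Lambda^2)$ has a pole at $s=1$ (Prop.~\ref{prop:lift}), so $\Pi$ admits a $(\triv,\psi)$-Shalika model and the periods $\omega^{\epsilon_0}(\Pi_f)$, $\omega(\Pi_\infty)$ of \cite{grob-ragh} are defined and non-zero, precisely as recalled in \S\ref{sect:WSext}; likewise all periods $p^t(\Pi),p^t(\Pi_\infty),p^b(\Pi),p^b(\Pi_\infty)$ are non-zero, and $L^S(1,\Pi,{\rm Sym}^2)\neq 0$ by \cite{shahidi_certainL}. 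In particular Theorem~\ref{thm:Symm2} may be read as $L^S(1,\Pi,{\rm Sym}^2)\sim_{\Q(\Pi_f)}L(\tfrac12,\Pi_f)\,p^b(\Pi)\,p^b(\Pi_\infty)$ with every factor non-zero.

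Next I would establish the auxiliary relation $L(\tfrac12,\Pi_f)\approx_{\Q(\Pi_f)}\omega^{\epsilon_0}(\Pi_f)\,\omega(\Pi_\infty)$. This comes from dividing Theorem~\ref{thm:residue}, namely $L(\tfrac12,\Pi_f)\cdot{\rm Res}_{s=1}(L^S(s,\Pi,\Lambda^2))\sim_{\Q(\Pi_f)}p^t(\Pi)\,p^t(\Pi_\infty)$, by Cor.~\ref{cor:Wext} (equivalently \cite{grob-ragh}, Thm.~7.1.2), namely ${\rm Res}_{s=1}(L^S(s,\Pi,\Lambda^2))\approx_{\Q(\Pi_f)}P^t(\Pi)\,P^t(\Pi_\infty)=p^t(\Pi)\,p^t(\Pi_\infty)\,\omega^{\epsilon_0}(\Pi_f)^{-1}\,\omega(\Pi_\infty)^{-1}$; the division is legitimate because the residue is non-zero, and it cancels the common factor $p^t(\Pi)\,p^t(\Pi_\infty)$.

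Finally I would substitute this auxiliary relation into the displayed form of Theorem~\ref{thm:Symm2} and invoke the definitions $P^b(\Pi)=p^b(\Pi)\,\omega^{\epsilon_0}(\Pi_f)$ and $P^b(\Pi_\infty)=p^b(\Pi_\infty)\,\omega(\Pi_\infty)$, obtaining
$$L^S(1,\Pi,{\rm Sym}^2)\ \approx_{\Q(\Pi_f)}\ \bigl(p^b(\Pi)\,\omega^{\epsilon_0}(\Pi_f)\bigr)\cdot\bigl(p^b(\Pi_\infty)\,\omega(\Pi_\infty)\bigr)\ =\ P^b(\Pi)\,P^b(\Pi_\infty),$$
which is the assertion. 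The whole argument is formal manipulation of the equivalences ``$\sim$'' and ``$\approx$''; the only genuine point of care — and thus the ``main obstacle'', such as it is — is bookkeeping these two relations correctly and justifying the cancellation by the non-vanishing of ${\rm Res}_{s=1}(L^S(s,\Pi,\Lambda^2))$, which is guaranteed by the standing hypothesis of Prop.~\ref{prop:lift}.
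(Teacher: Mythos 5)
Your proposal is correct and takes essentially the paper's route: the paper's own proof just combines Thm.\ \ref{thm:Symm2} with \cite{grob-ragh}, Thm.\ 7.1.2 --- which is precisely your auxiliary relation $L(\tfrac12,\Pi_f)\sim_{\Q(\Pi_f)}\omega^{\epsilon_0}(\Pi_f)\,\omega(\Pi_\infty)$ (you recover it by dividing Thm.\ \ref{thm:residue} by Cor.\ \ref{cor:Wext}, which is the same content, as you note) --- and then substitutes using $P^b(\Pi)=p^b(\Pi)\,\omega^{\epsilon_0}(\Pi_f)$ and $P^b(\Pi_\infty)=p^b(\Pi_\infty)\,\omega(\Pi_\infty)$. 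One small caveat: your parenthetical claim that \emph{every} factor in Thm.\ \ref{thm:Symm2} is non-zero overreaches for $L(\tfrac12,\Pi_f)$, whose non-vanishing is not known under these hypotheses; but this is not needed, since the formal manipulation of ``$\sim$'' and ``$\approx$'' goes through with the paper's conventions regardless.
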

\begin{proof}
This follows directly from Thm.\ \ref{thm:Symm2} and \cite{grob-ragh}, Thm.\ 7.1.2.
\end{proof}

\bigskip

\appendix
\section*{Appendix}\label{appendix}
\begin{center}
{\it by Nadir Matringe}
\end{center}
\vskip 5pt
\subsection*{}
In this appendix, $F$ denotes a non-archimedean local field with valuation $v$ and absolute value $|.|$ (normalised as usual). We will write $|g|$ for $|\det(g)|$ when $g$ is a square matrix, $\O$ for the ring of integers of $F$ and let $\wp=\varpi \O$ be the maximal ideal of $\O$.

\begin{propa*}
Let $\phi\in \sm_c(F)$, $\chi$ a character of $F^\times$, and $m\geq 0$ and integer. Then,
$$T(q^{-s},\chi,m,\phi):=\int_{F^\times} \phi(x)\ \chi(x) \ v(x)^m|x|^s \ d^\times x$$
(with ${\rm vol}_{d^\times x}(\O^\times)=1$) converges for $|q^{-s}|<|\chi(\varpi)|^{-1}$ and can be extended to an element of $L(s,\chi)^m\cdot\C[q^{\pm s}]$. Moreover, if $\sigma\in {\rm Aut}(\C)$, then
$$\sigma(T(q^{-s},\chi,m,\phi))= T(\sigma(q^{-s}),\sigma(\chi),m,\sigma(\phi)).$$
\end{propa*}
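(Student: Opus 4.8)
The plan is to evaluate $T(q^{-s},\chi,m,\phi)$ by decomposing $F^\times$ into the shells $\varpi^k\O^\times$, $k\in\Z$, and to extract both the meromorphic continuation and the $\mathrm{Aut}(\C)$-equivariance from the resulting power-series expansion in $q^{-s}$. On the shell $\varpi^k\O^\times$ one has $v(x)=k$, $|x|=q^{-k}$, and, writing $x=\varpi^k u$ with $u\in\O^\times$, $\chi(x)=\chi(\varpi)^k\chi(u)$; since $\phi$ is locally constant with compact support in $F$, it vanishes on $\varpi^k\O^\times$ for $k\ll0$ and is constantly $\phi(0)$ on $\varpi^k\O^\times$ for $k\gg0$. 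Setting $a_k:=\int_{\O^\times}\phi(\varpi^k u)\,\chi(u)\,d^\times u$ — a finite sum, because $u\mapsto\phi(\varpi^k u)$ and $\chi|_{\O^\times}$ both factor through a finite quotient $\O^\times/(1+\wp^n)$ whose cosets have rational volume for the normalisation $\mathrm{vol}(\O^\times)=1$ — one obtains
$$T(q^{-s},\chi,m,\phi)=\sum_{k\in\Z}a_k\,k^m\,\bigl(\chi(\varpi)q^{-s}\bigr)^{k},$$
where $a_k=0$ for $k\ll0$ and, for $k\gg0$, $a_k$ equals $\phi(0)$ if $\chi$ is unramified and $0$ if $\chi$ is ramified. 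The tail over $k\to+\infty$ converges exactly when $|\chi(\varpi)|\,|q^{-s}|<1$, which is the asserted range.

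For the continuation I would distinguish two cases. If $\chi$ is ramified, then $a_k=0$ also for $k\gg0$, so $T$ is a finite Laurent polynomial in $q^{-s}$ with coefficients in the $\Q$-span of the numbers $\phi(y)\chi(z)$; as $L(s,\chi)=1$, this settles the first assertion. If $\chi$ is unramified, I would split off the constant tail and use the elementary identity $\sum_{k\ge0}k^mX^k=(X\tfrac{d}{dX})^m\tfrac{1}{1-X}=\tfrac{P_m(X)}{(1-X)^{m+1}}$ with $P_m\in\Z[X]$; since $L(s,\chi)^{-1}=1-\chi(\varpi)q^{-s}$, this places $T$ in $L(s,\chi)^{m+1}\cdot\C[q^{\pm s}]$ (the one extra power of $L(s,\chi)$, beyond the $m$ carried by the factor $v(x)^m$, being the usual Tate contribution of the unramified component). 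In particular $T$ is, canonically, the value at $q^{-s}$ of a rational function in one variable with complex coefficients, and it is this rationality that makes sense of $\sigma\bigl(T(q^{-s},\chi,m,\phi)\bigr)$: one applies $\sigma$ to the coefficients of that rational function, $\sigma$ fixing the variable $q^{-s}$ because $q\in\Z$.

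For the equivariance, finally: since $\phi$ has finite image and $\chi|_{\O^\times}$ takes values in roots of unity, $\sigma\circ\phi$ again lies in $\sm_c(F)$ and $\sigma\circ\chi$ is again a character of $F^\times$, so the right-hand side is meaningful. Applying $\sigma$ to the displayed expansion and using the rationality of the coset volumes entering $a_k$, one gets $\sigma(a_k)=\int_{\O^\times}(\sigma\circ\phi)(\varpi^k u)\,(\sigma\circ\chi)(u)\,d^\times u$ and $\sigma(\chi(\varpi)^k)=(\sigma\circ\chi)(\varpi)^k$; hence $\sigma\bigl(T(q^{-s},\chi,m,\phi)\bigr)$ and $T(q^{-s},\sigma\circ\chi,m,\sigma\circ\phi)$ have the same Laurent expansion in $q^{-s}$, and two rational functions with a common expansion are equal. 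I expect no real analytic difficulty; the only points that need care are organisational, namely that the rationality proved in the previous step is precisely what renders $\sigma(T)$ well defined, and that a possibly discontinuous $\sigma$ may be pulled through the integral defining $a_k$ exactly because $a_k$ is a finite $\Q$-linear combination of products $\phi(y)\chi(z)$.
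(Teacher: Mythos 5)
Your proof is correct and takes essentially the same route as the paper's own: decomposition of $F^\times$ into the shells $\varpi^k\O^\times$, coefficients $a_k$ expressed as finite sums over cosets of rational volume (whence $\sigma$-equivariant), a ramified/unramified case split, and the rational-function identity for $\sum_k k^m X^k$ handling the unramified tail. Your remark that this tail in fact yields $L(s,\chi)^{m+1}\cdot\C[q^{\pm s}]$ rather than $L(s,\chi)^{m}\cdot\C[q^{\pm s}]$ is a correct refinement of a harmless off-by-one in the statement (the paper's proof likewise writes $(1-\chi(\varpi)q^{-s})^{m}$ in the denominator), and it does not affect the use made of the proposition in Theorem A.
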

\begin{proof}
For $k \in \Z$, we set $$c_k(\chi,\phi):=\int_{\O^\times} \phi(\varpi^k x)\ \chi(x) \ d^\times x,$$ and
$$c(\chi):=\int_{\O^\times } \chi(x)\ d^\times x,$$ so that we have
$c(\chi)=0$, if $\chi$ is ramified, and $c(\chi)=1$, if $\chi$ is unramified.

As $\O^\times $ is compact, take $U$ an open compact subgroup of $\O^\times $ fixing $x\mapsto \phi(\varpi^k x)$ and $\chi$, and let
$\O^\times =\coprod_{i=1}^\ell x_i U$, then
$c_k(\chi,\phi)=\sum_{i=1}^\ell \frac{1}{\ell} \phi(\varpi^k x_i)\chi(x_i)$, hence, as $\sigma(1/\ell)=1/\ell$, we have $$\sigma(c_k(\chi,\phi))=c_k(\sigma(\chi),\sigma(\phi)).$$

Let $a$ be a positive integer such that the support of $\phi$ is contained in $\wp^{a-1}$. Let $b\geq a$ be such that
$\phi$ is constant on $\wp^b$. We have
$$T(q^{-s},\chi,m,\phi)= \sum_{a\leq k \leq b}  c_k(\chi,\phi)k^m\chi(\varpi)^k q^{-ks} +
\sum_{k \geq b} \l c(\chi) k^m \chi(\varpi)^k q^{-ks}.$$
We set
$$A(q^{-s},\chi,m,\phi):=\sum_{a\leq k \leq b} c_k(\chi,\phi) k^m \chi(\varpi)^kq^{-ks}$$
and
$$B(q^{-s},\chi,m,\phi):=\sum_{k \geq b} \l c(\chi) k^m \chi(\varpi)^k q^{-ks}.$$

Suppose that $\chi$ is ramified, i.e., non trivial on $\O^\times $. Then
$$T(q^{-s},\chi,m,\phi)=A(q^{-s},\chi,m,\phi).$$
In this case, we have
\begin{eqnarray*}
\sigma(T(q^{-s},\chi,m,\phi)) & = & \sum_{a\leq k \leq b} \sigma(c_k(\chi,\phi)) k^m \sigma(\chi(\varpi))^k \sigma(q^{-ks})\\
& = & \sum_{a\leq k \leq b} c_k(\sigma(\chi),\sigma(\phi)) k^m \sigma(\chi(\varpi))^k\sigma(q^{-ks})\\
& = & A(\sigma(q^{-s}),\sigma(\chi),m,\sigma(\phi))\\
& = & T(\sigma(q^{-s}),\sigma(\chi),m,\sigma(\phi)),
\end{eqnarray*}
which shows the claim in this case. Suppose now that $\chi$ is unramified. Then there is $P\in \Q[X,X^{-1}]$ (which can be determined explicitly, notice that
the coefficients of $P$ are in $\Q$, hence $\sigma$-invariant) such that
$$B(q^{-s},\chi,m,\phi)= \l \sum_{k \geq b} k^m \chi(\varpi)^k q^{-ks}= \l P(\chi(\varpi)q^{-s})/ (1-\chi(\varpi)q^{-s})^m,$$ which implies that
$$\sigma(B(q^{-s},\chi,m,\phi))=P(\sigma(\chi(\varpi))\sigma(q^{-s}))/(1-\sigma(\chi(\varpi))\sigma(q^{-s}))^m=
B(\sigma(q^{-s}),\sigma(\chi),m,\sigma(\phi)).$$

This implies again $\sigma(T(q^{-s},\chi,\phi))=T(\sigma(q^{-s}),\sigma(\chi),\sigma(\phi))$.

\end{proof}

We denote by $P_n$ the mirabolic subgroup of $G_n=GL(n,F)$, and by $A_n$ the diagonal torus of $G_n$, which is contained in the standard Borel
$B_n$ with unipotent radical $N_n$. For $k\in\{1,\dots,n-1\}$, the group $G_k$ embeds naturally in $G_n$, so the center $Z_k$
of $G_k$ embeds in $A_n$, and $A_n=Z_1\dots Z_n$ (direct product). The following result follows
from Proposition 2.2 of \cite{JPS}. We fix a non-trivial additive character $\psi$ of $F$. If $z_i$ belongs to $Z_i\subset A_n$,
we set $t(z_i)$ to be the element of $F^*$ such that $z_i=diag(t(z_i),I_{n-i})$

\begin{propb*}
Let $\pi$ be an irreducible generic representation of $G_n$, and $\xi\in \Whit^\psi(\pi)$. For each $k \in \{1,\dots,n-1\}$, there exists a finite set $I_k$, a string of characters $(c_{i_k})_{i_k\in I_k}$ of $F^*$, non-negative integers $(m_{i_k}^\xi)_{i_k\in I_k}$, and functions $(\phi_{i_k}^\xi)_{i_k\in I_k}$ such that
$$\xi(z_1\dots z_{n-1})=\sum_{k=1}^{n-1}\sum_{i_k\in I_k} \prod_{k=1}^{n-1} c_{i_k}(t(z_k)) \ v(t(z_k))^{m_{i_k}^W} \ \phi_{i_k}^\xi(t(z_k)).$$
(The characters $c_{ik}$, which we allow to be equal, depend only on $\pi$.)
\end{propb*}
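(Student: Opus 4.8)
The plan is to deduce the statement, exactly as in \cite{JPS}, Prop.~2.2, from the theory of the Kirillov model and the Bernstein--Zelevinsky derivatives, by induction on $n$. First I would restrict $\xi$ to the mirabolic subgroup $P_n$ and regard $\xi|_{P_n}$ as a vector of the $P_n$-module $\Whit^\psi(\pi)|_{P_n}$, which carries the canonical Bernstein--Zelevinsky filtration $0=V_{n+1}\subset V_n\subset\cdots\subset V_1=\Whit^\psi(\pi)|_{P_n}$ whose $k$-th graded quotient is $(\Phi^+)^{k-1}\Psi^+\big(\pi^{(k)}\big)$, where $\pi^{(k)}$ is the $k$-th derivative of $\pi$, a finite-length representation of $G_{n-k}$; here $\pi^{(n)}\cong\C$ since $\pi$ is irreducible and generic. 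Decomposing $\xi|_{P_n}$ into components along the graded pieces, it suffices to analyse each piece separately and then evaluate on the diagonal torus $A_{n-1}=Z_1\cdots Z_{n-1}$ of $P_n$, whose free parameters $t(z_1),\dots,t(z_{n-1})$ are the coordinates occurring in the statement.

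The crux is the dictionary between the functors and the coordinates. In $(\Phi^+)^{k-1}\Psi^+(\pi^{(k)})$ the $k-1$ compact-induction functors $\Phi^+$ account precisely for the last $k-1$ coordinates $t(z_{n-k+1}),\dots,t(z_{n-1})$: restricted to $A_{n-1}$, a vector of this piece is compactly supported in $F^\times$ in each of those variables, contributing factors $\phi(t(z_j))$ with $\phi\in\sm_c(F)$ supported away from $0$ (trivial character, exponent $0$). In the remaining coordinates $t(z_1),\dots,t(z_{n-k})$, which form the diagonal torus of the strictly smaller group $G_{n-k}$ with $t(z_{n-k})$ its central direction, the restriction is a (generalised) matrix coefficient of the finite-length representation $\pi^{(k)}$ along that torus. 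The proposition thus reduces to the analogous expansion for matrix coefficients of an arbitrary finite-length smooth representation $\tau$ of $G_m$ restricted to its diagonal torus with the last coordinate normalised --- a statement that one may prove by the same induction or simply quote from Casselman's theory of asymptotics (Jacquet modules). In that reduced statement the relevant characters are the exponents of $\tau$, i.e.\ the characters of $F^\times$ occurring in the Jacquet modules of $\tau$ along the standard minimal parabolic, factoring as products $\prod_k c_{i_k}(t(z_k))$; the powers $v(\cdot)^{m}$ arise from the non-semisimplicity of the torus action on those Jacquet modules, so the bound on $m$ is exactly the finiteness of the length; and the $\phi\in\sm_c(F)$ are eventually constant near $0$, encoding the leading asymptotic constants, while the genuinely compactly supported remainder is absorbed with trivial character and $m=0$. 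Since the derivatives $\pi^{(k)}$ --- hence their Jacquet modules --- are functorially attached to $\pi$, the characters $c_{i_k}$ produced this way depend only on $\pi$, whereas the $\phi^\xi_{i_k}$ (and, if one wishes, the $m^\xi_{i_k}$) depend on $\xi$ through the chosen decomposition into graded pieces; assembling the $n$ pieces yields the asserted formula, the base cases $n=1$ (empty product) and $n=2$ (from $0\to\sm_c(F^\times)\to\Whit^\psi(\pi)|_{P_2}\to\Psi^+(\pi^{(1)})\to 0$ together with the central character of $\pi$) being immediate.

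The main obstacle will be the bookkeeping that ties the iterated functors $\Phi^+,\Psi^+$ to the individual coordinates, so that the expansion comes out as a genuine product over $t(z_1),\dots,t(z_{n-1})$ and not merely as a function of moderate growth on $A_{n-1}$: one has to check that each successive $\Phi^+$ produces compact support in exactly one ``new'' coordinate while the non-compact tail is governed by the strictly smaller $G_{n-k}$, so that the inductive hypothesis (or the multivariable form of Casselman's expansion, with exponents read off from the Jacquet module of $\pi$ along the full Borel) applies cleanly. A secondary point is to verify that the valuation powers really occur with uniformly bounded exponents; this is where the finite length of the derivatives enters, and it is precisely what makes the Mellin transforms treated in Proposition~A land in $L(s,\chi)^m\cdot\C[q^{\pm s}]$. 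The $\mathrm{Aut}(\C)$-equivariance is not needed for the present statement --- it is the content of Proposition~A, and is combined with this decomposition only afterwards.
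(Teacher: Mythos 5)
The paper does not actually prove Proposition B: the appendix simply states that it ``follows from Proposition 2.2 of \cite{JPS}'', so there is no internal argument to compare with line by line. Your sketch is, in substance, a reconstruction of the standard proof underlying that citation rather than a different route: restriction to the mirabolic $P_n$, the Bernstein--Zelevinsky filtration with graded pieces $(\Phi^+)^{k-1}\Psi^+(\pi^{(k)})$, compact support in the coordinates produced by $\Phi^+$, and Casselman/Jacquet-module asymptotics for the derivatives, which is exactly where the characters $c_{i_k}$ (depending only on $\pi$) and the valuation powers $v(\cdot)^{m}$ with bounded exponents come from. If you were to write this out, two points need genuine care. First, a single vector $\xi|_{P_n}$ does not literally decompose along the graded pieces of a filtration; you must induct on the filtration, pick a representative mapping to the image in the quotient, and verify that the correcting element of the submodule again restricts to $A_{n-1}$ in the asserted product form --- this is the bookkeeping you flag, and it is precisely what the proof of Prop.\ 2.2 in \cite{JPS} carries out. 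Second, the assertion that each application of $\Phi^+$ yields compact support in exactly one new coordinate $t(z_j)$, with the remaining variables governed by the smaller group, is a concrete support computation for compactly induced modules, not a formal consequence of the functorial definitions. With these caveats your outline is sound; what the paper's citation buys is brevity, while your route makes explicit that the $c_{i_k}$ are the exponents attached to the derivatives of $\pi$ (hence depend only on $\pi$) and explains why the Mellin transforms of Proposition A land in $L(s,\chi)^m\cdot\C[q^{\pm s}]$, which is how Proposition B is used in the proof of Theorem A.
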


We denote by $w_n$ the element of the symmetric group $\mathfrak{S}_n$ naturally embedded in $G_n$, defined by
$$\begin{pmatrix} 1 & 2 & \dots & m-1 & m & m+1 & m+2 & \dots & 2m-1 & 2m \\
 1 & 3 & \dots & 2m-3 & 2m-1 & 2 & 4 & \dots & 2m-2 & 2m
 \end{pmatrix}$$ when $n=2m$ is even, and by
$$\begin{pmatrix} 1 & 2 & \dots & m-1 & m & m+1 & m+2 &\dots & 2m & 2m+1 \\
 1 & 3 & \dots & 2m-3 & 2m-1 & 2m+1 & 2 & \dots & 2m-2 & 2m
 \end{pmatrix}$$ when $n=2m+1$ is odd. We denote by $L_n$ the standard Levi subgroup of $G_n$
which is $G_{\lfloor (n+1)/2 \rfloor}\times G_{\lfloor n/2 \rfloor}$ embedded by the map $(g_1,g_2)\mapsto diag(g_1,g_2)$.
We denote by $H_n$ the group $L_n^{w_n}=w_n^{-1} L_n w_n$, by $J(g_1,g_2)$ the matrix
$w_n^{-1} diag(g_1,g_2)w_n$ of $H_n$ (with $diag(g_1,g_2)\in L_n$).
Let $r$ be a positive integer. Thanks to the Iwasawa decomposition $G_r=N_r\cdot A_r \cdot G_r(\O)$, if $\chi$ is an unramified character of $A_r$, then the map
$$\tilde{\chi}:n\cdot a\cdot k\mapsto \chi(a)$$
is well defined on $G_r$. For example, if $\delta_r$ is the modulus character of the maximal
parabolic subgroup of type $(r-1,1)$ restricted to $A_r$, we have a map $\tilde{\delta_r}$ on $G_r$. Similarly, if
$$\lambda:z_1\dots z_r \in A_r \mapsto |t(z_1) \dots t(z_{r-1})|,$$
the map $\tilde{\lambda}$ is also defined on $G_r$, and left invariant under $Z_r$.

\begin{thma*}
Let $\pi$ be an irreducible generic representation of $G_n$ with trivial central character and $\xi\in \Whit^\psi(\pi)$. Set $m'={\lfloor (n+1)/2\rfloor}$ and $m={\lfloor n/2\rfloor}$. The integral
$$Z(\xi,q^{-s}):=\int_{N_{m}\backslash G_{m}}\int_{Z_{m'} N_{m'}\backslash G_{m'}} \xi(J(h,g))\ \tilde{\delta}_{m'}(g)\ |g|^s \ \tilde{\lambda}(h)^s \ dg \ dh$$ (with the normalisations
$dg=d^\times a \ dk$ with $d^\times a(A_m(\O))=1$ and $dk(G_{m}(\O))=1$, $dh=d^\times b \ dk$ with $d^\times b(Z_{m'}(\O)\backslash A_{m'}(\O))=1$ and $dk(G_{m'}(\O))=1$) converges absolutely for $|q^{-s}|$ small enough. It extends to an element of $\C(q^{-s})$, which satisfies that for all $\sigma\in {\rm Aut}(\C)$, one has $$\sigma(Z(\xi,q^{-s}))=Z(\sigma\circ\xi, \sigma(q^{-s})).$$
\end{thma*}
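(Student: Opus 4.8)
The plan is to unfold the integral $Z(\xi,q^{-s})$ via the Iwasawa decomposition, reduce it to a finite $\Q$-linear combination of products of one-variable local integrals on the coordinates of the diagonal torus of $G_n$, and then invoke Propositions A and B. First I would write $G_m = N_m A_m G_m(\O)$ and $G_{m'} = N_{m'} A_{m'} G_{m'}(\O)$, turning the two outer integrations into integrations over $A_m$ and $Z_{m'}\backslash A_{m'}$ against the relevant modulus characters (the Jacobians of $N\backslash G \cong A\times K$) together with integrations over the compact groups $G_m(\O)$ and $G_{m'}(\O)$. Since $\pi$ is smooth, $\xi$ is right-invariant under some compact open subgroup, so the integral over $G_m(\O)\times G_{m'}(\O)$ reduces to a finite sum over cosets; with the normalisations $\mathrm{vol}(A(\O))=\mathrm{vol}(G(\O))=1$ the resulting combinatorial coefficients lie in $\Q$. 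This exhibits $Z(\xi,q^{-s})$ as a finite $\Q$-linear combination of integrals $\int_{A_m\times(Z_{m'}\backslash A_{m'})}\xi'(J(a,a'))\,\eta_s(a,a')\,da\,da'$, where $\xi'$ is a right translate of $\xi$ (hence still a Whittaker function of $\pi$) and $\eta_s$ is an explicit character of $A_m\times A_{m'}$ built out of $|\cdot|^s$, $\tilde\delta_{m'}$, $\tilde\lambda^s$ and the two Iwasawa modulus characters, in which every exponent of $s$ is a positive integer.

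Next, conjugation by $w_n$ carries $A_m\times(Z_{m'}\backslash A_{m'})$ onto the torus $A_n$ modulo its centre, which (as $\pi$ has trivial central character) we may parametrise by $z_1\cdots z_{n-1}$. Applying Proposition B to $\xi'$ expands $\xi'(z_1\cdots z_{n-1})$ as a finite sum of products $\prod_{k=1}^{n-1}c_{i_k}(t(z_k))\,v(t(z_k))^{m_{i_k}}\,\phi_{i_k}(t(z_k))$, and since $\eta_s$ likewise factors coordinatewise through the $t(z_k)$, the torus integral becomes a finite sum of products over $k$ of one-variable integrals $\int_{F^\times}\phi_{i_k}(x)\,c_{i_k}(x)\,v(x)^{m_{i_k}}\,|x|^{a_k s + b_k}\,d^\times x$ with $a_k\in\Z_{>0}$ and $b_k\in\Z$. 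Absorbing $|x|^{b_k}$ into the character, each such factor is precisely $T\big((q^{-s})^{a_k},\,c_{i_k}|\cdot|^{b_k},\,m_{i_k},\,\phi_{i_k}\big)$ in the notation of Proposition A. By Proposition A these converge for $|q^{-s}|$ small enough and extend to elements of $\C(q^{-s})$; hence so does the finite sum of products, which proves the convergence and meromorphic-continuation claims.

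For the $\mathrm{Aut}(\C)$-equivariance I would apply $\sigma$ to this explicit expression. The combinatorial coefficients are rational, hence $\sigma$-fixed, and $\sigma$ commutes with the finite sums and products. The map $\xi'\mapsto\sigma\circ\xi'$ commutes with right translation and sends $\Whit^\psi(\pi)$ to $\Whit^{\sigma\circ\psi}({}^\sigma\pi)$, where ${}^\sigma\pi$ is again irreducible, generic and of trivial central character, so Proposition B applies to $\sigma\circ\xi'$, and its expansion may be taken to be the coordinatewise $\sigma$-twist of that of $\xi'$: same exponents $m_{i_k}$, characters $\sigma\circ c_{i_k}$, functions $\sigma\circ\phi_{i_k}$. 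The $\sigma$-equivariance of each one-variable factor is exactly the second assertion of Proposition A, namely $\sigma\big(T(q^{-s},\chi,m,\phi)\big)=T(\sigma(q^{-s}),\sigma(\chi),m,\sigma(\phi))$. Matching the two computations — $\sigma$ applied to the expansion of $Z(\xi,q^{-s})$ versus the expansion of $Z(\sigma\circ\xi,\sigma(q^{-s}))$ produced by the same unfolding — yields the identity $\sigma\big(Z(\xi,q^{-s})\big)=Z(\sigma\circ\xi,\sigma(q^{-s}))$.

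The step I expect to be the main obstacle is the bookkeeping in the second and third paragraphs: one must track, through the permutation $w_n$ and the Iwasawa Jacobians, exactly how $|\cdot|^s$, $\tilde\delta_{m'}$ and $\tilde\lambda^s$ distribute over the coordinates $t(z_1),\dots,t(z_{n-1})$ of $A_n$, and check that each coordinate ends up carrying a factor of the form $(\text{unramified character})\cdot|x|^{a_k s}$ with $a_k$ a positive integer, so that Proposition A can be applied coordinate by coordinate; the remaining manipulations with finite sums are routine.
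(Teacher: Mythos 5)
Your proposal is correct and follows essentially the same route as the appendix's own proof: Iwasawa decomposition plus a finite coset sum with rational coefficients, identification of $A_m\times(Z_{m'}\backslash A_{m'})$ with $A_{n-1}$ via $J$, the expansion of Proposition B, factorization into the one-variable integrals $T\bigl((q^{-s})^{k},\,c_{i_k}\chi_k,\,m_{i_k},\,\phi_{i_k}\bigr)$ of Proposition A (with the auxiliary unramified character taking values in $q^{\Z}\subset\Q$, hence $\sigma$-fixed), and then the equivariance statement of Proposition A. The bookkeeping you flag as the main obstacle works out exactly as you expect, with the exponent on $q^{-s}$ in the $k$-th coordinate being the positive integer $k$.
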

\begin{proof}
Let $\delta'$ be the modulus character of $B_m$ and $\delta''$ that of $B_{m'}$. Let $U'\times U$ be a compact open subgroup of
$G_{m'}(\O)\times G_m(\O)$ such that $J(U'\times U)$ fixes $\xi$ on the right, and write $G_{m'}(\O)\times G_n(\O)=\coprod_{j=1}^\ell x_j U'\times y_j U$. We have
$$Z(\xi,q^{-s})=\sum_{j=1}^\ell \frac{1}{\ell}\int_{A_m}\int_{A_{m'-1}} \xi_j(J(b,a)) \ \delta_{m'}(a)\ \delta'(a)^{-1} \ \delta''(b)^{-1}|a|^s|b|^s \ d^\times a \ d^\times b,$$
where
$\xi_j(g)=\xi(g J(x_j,y_j))$.
We identify $A_m\times A_{m'-1}$ with $A_{n-1}$ by $(a,b)\mapsto J(b,a)$, and set
$\chi$ the character of $A_{n-1}$ defined by $J(b,a)\mapsto \delta_{m'}(a)\delta'(a)^{-1}\delta''(b)^{-1}$. The previous integral
becomes
$$Z(\xi,q^{-s})=\sum_{j=1}^\ell \frac{1}{\ell}\int_{A_{n-1}} \xi_j(z)\ \chi(z)\ |z|^s \ d^\times z,$$
We set $\chi_k$ to be the restriction of $\chi$ to $Z_k$. It takes values in $q^{\Z}\subset \Q$. If we now apply the second proposition of this appendix, we obtain that
$$Z(\xi,q^{-s})$$ is the sum for $k$ between $1$ and $n-1$, $i\in I_k$, and $j\in \{1,...,\ell\}$ of
$$\frac{1}{\ell}\prod_{k=1}^{n-1}T((q^{-s})^k,c_{i_k}\chi_k,m_{i_k}^{\xi_j},\phi_{i_k}^{\xi_j}).$$ This implies,
according to the first proposition of this appendix, that
$$\sigma(Z(\xi,q^{-s}))$$ is the sum for $k$ between $1$ and $n-1$, $i\in I_k$, and $j\in \{1,...,\ell\}$ of
$$\frac{1}{\ell}\prod_{k=1}^{n-1}T((\sigma(q^{-s}))^k,\sigma(c_{i_k})\chi_k,m_{i_k}^{\xi_j},\sigma(\phi_{i_k}^{\xi_j})).$$
This means that $\sigma(Z(\xi,q^{-s}))$ is equal to $Z(\sigma\circ\xi,\sigma(q^{-s}))$.
\end{proof}

\vskip 10pt
\footnotesize
{\sc Nadir Matringe: Universit\'e de Poitiers, Laboratoire de Math\'ematiques et Applications,
T\'el\'eport 2 - BP 30179, Boulevard Marie et Pierre Curie, 86962, Futuroscope Chasseneuil Cedex. Email: }
\\ {\it E-mail address:} {\tt Nadir.Matringe@math.univ-poitiers.fr}

\bigskip

\normalsize

\end{document}